\documentclass[10pt]{amsart}
\usepackage{psfrag}
\usepackage[parfill]{parskip}
\usepackage{float, graphicx}
\usepackage[]{epsfig}
\usepackage{amsmath, amsthm, amssymb}
\usepackage{epsfig}
\usepackage{verbatim}
\usepackage{multicol}
\usepackage{url}
\usepackage{latexsym}
\usepackage{mathrsfs}
\usepackage[colorlinks, bookmarks=true]{hyperref}
\usepackage{graphicx}
\usepackage{enumerate}
\usepackage[top=2in, bottom=1.5in, left=1.1in, right=1.1in]{geometry}

\setlength{\unitlength}{0.3cm}
\title{Partition statistics equidistributed with the number of hook difference one cells}
\author{Jiaoyang Huang, Andrew Senger, Peter Wear, and Tianqi Wu}

\address{Jiaoyang Huang, Department of Mathematics, Massachusetts Institute of Technology}
\email{jiaoyang@mit.edu}

\address{Andrew Senger, School of Mathematics, University of Minnesota}
\email{Andrew Senger: senge020@umn.edu}

\address{Peter Wear, Department of Mathematics, Massachusetts Institute of Technology}
\email{Peter Wear: pwear@mit.edu}

\address{Tianqi Wu, Department of Mathematics, Massachusetts Institute of Technology}
\email{Tianqi Wu: timwu@mit.edu}

\begin{document}
\maketitle
\newtheorem{theorem}{Theorem}[section]
\newtheorem*{theorem*}{Theorem}
\newtheorem{lemma}[theorem]{Lemma}
\newtheorem{remark}[theorem]{Remark}
\newtheorem{proposition}[theorem]{Proposition}
\newtheorem{conjecture}[theorem]{Conjecture}
\newtheorem{problem}[theorem]{Problem}
\newtheorem{corollary}[theorem]{Corollary}
\theoremstyle{definition}
\newtheorem{example}[theorem]{Example}
\newtheorem{definition}[theorem]{Definition}

\begin{abstract}
Let $\lambda$ be a partition, viewed as a Young diagram. We define the hook difference of a cell of $\lambda$ to be the difference of its leg and arm lengths. Define $h_{1,1}(\lambda)$ to be the number of cells of $\lambda$ with hook difference one. In \cite{BF}, algebraic geometry is used to prove a generating function identity which implies that $h_{1,1}$ is equidistributed with $a_2$, the largest part of a partition that appears at least twice, over the partitions of a given size. In this paper, we propose a refinement of the theorem of \cite{BF} and prove some partial results using combinatorial methods. We also obtain a new formula for the q-Catalan numbers which naturally leads us to define a new q,t-Catalan number with a simple combinatorial interpretation.
\end{abstract}

\section{Introduction}
In \cite{BF}, Buryak and Feigin proved the following partition identities using algebraic geometry:
\begin{theorem}\label{mainthm}
Let $\lambda=(\lambda_1,\lambda_2,\cdots,\lambda_k)$, $\lambda_1 \geq \lambda_2 \geq \cdots \geq \lambda_k$, be a partition and let $\mathcal{P}$ be the set of all partitions. View $\lambda$ as a Young diagram. Further define $h_{1,1} (\lambda)$ to be the number of cells of $\lambda$ which have leg length one greater than arm length. Say that these cells have \textbf{hook difference one}. Then
\begin{enumerate}

\item \label{mainthm1} (\cite{BF}, Theorems 1.2 and 1.3) \begin{align*} \sum_{\lambda \in \mathcal{P}} t^{h_{1,1} (\lambda)} q^{|\lambda|} = \sum_{\lambda\in \mathcal{P}}\prod_{i\geq 1}\left[\lambda_i-\lambda_{i+2}+1\atop \lambda_{i+1}-\lambda_{i+2}\right]_{t}q^{{\lambda_1\choose 2}+|\lambda|}.
\end{align*}

\item \label{mainthm2} (\cite{BF}, Theorem 1.3) \begin{align*}
\sum_{\lambda\in \mathcal{P}}\prod_{i\geq 1}\left[\lambda_i-\lambda_{i+2}+1 \atop \lambda_{i+1}-\lambda_{i+2}\right]_{t}q^{{\lambda_1\choose 2}+|\lambda|}=\prod_{i\geq 1}\frac{1}{(1-q^{2i-1})(1-tq^{2i})}.
\end{align*}
\end{enumerate}
\end{theorem}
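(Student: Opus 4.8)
The plan is to treat the two parts separately, as they are of quite different character: part (2) is a two-variable $q$-series identity of the kind occurring in the Andrews--Gordon--Bressoud family (with the extra variable $t$ sitting in the base of the Gaussian binomials), which I would prove analytically, while part (1) relates the genuinely combinatorial statistic $h_{1,1}$ to a difference-sequence generating function and is the deeper of the two.

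For part (2), I would first pass to the difference coordinates $d_i := \lambda_i - \lambda_{i+1} \ge 0$, under which $\lambda_i - \lambda_{i+2} = d_i + d_{i+1}$, $\lambda_{i+1} - \lambda_{i+2} = d_{i+1}$, $|\lambda| = \sum_{i \ge 1} i\,d_i$, and $\lambda_1 = \sum_{i \ge 1} d_i$, so that the left side becomes $\sum_{(d_i)}\Bigl(\prod_{i \ge 1}\left[{d_i + d_{i+1} + 1 \atop d_{i+1}}\right]_t\Bigr)\, q^{\binom{\sum_i d_i}{2} + \sum_i i\,d_i}$, a nearest-neighbour ``transfer'' sum. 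The natural approach is to introduce a family of restricted partial sums $L_N$ (over partitions with at most $N$ parts, carrying a boundary variable for the last part and an auxiliary variable tracking $\lambda_1$), to establish a recursion expressing $L_N$ in terms of $L_{N-1}$ in which the sum over the newly-freed part is evaluated by the $q$-Chu--Vandermonde / $q$-binomial summation, and then to let $N \to \infty$; I expect the sum to collapse to $\prod_{i \ge 1}\frac{1}{(1-q^{2i-1})(1-tq^{2i})}$ essentially one factor per step, with the $t$ landing only on even exponents because consecutive steps alternate which variable carries it. (Equivalently one can recognize the sum as the output of a Bailey chain and apply the Bailey lemma.) The main obstacle is the global term $q^{\binom{\lambda_1}{2}}$, which is not additive in the $d_i$ and so prevents the naive recursion from closing: I would carry $\lambda_1$ with a separate variable throughout the transfer recursion and only re-introduce the weight $q^{\binom{\lambda_1}{2}}$ after the limit, at which stage a Gauss-type theta summation (e.g. $\sum_{m \ge 0} q^{\binom{m+1}{2}} = \prod_{i \ge 1}\frac{1-q^{2i}}{1-q^{2i-1}}$) enters.

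For part (1), both sides are generating functions over $\mathcal{P}$ with $q$ marking $|\lambda|$, so it would suffice to construct a size-preserving bijection realizing $t^{h_{1,1}(\lambda)}$ on the right. Reading $\left[{a \atop b}\right]_t$ as the generating function of partitions inside a $b \times (a-b)$ box, the right side is a sum over a partition $\mu$ (recorded only through its difference sequence, with weight $q^{\binom{\mu_1}{2} + |\mu|}$) decorated in its $i$-th slot by a partition $\nu^{(i)}$ inside a $(\mu_{i+1} - \mu_{i+2}) \times (\mu_i - \mu_{i+1} + 1)$ box, with total weight $t^{\sum_i |\nu^{(i)}|}$. I would try to build a map $\lambda \mapsto (\mu; \nu^{(1)}, \nu^{(2)}, \dots)$ repackaging $\lambda$ into a ``hook-difference-one-free core'' $\mu$ carrying all of the size, so that $|\lambda| = \binom{\mu_1}{2} + |\mu|$, together with decorations $\nu^{(i)}$ encoding the cells of $\lambda$ of hook difference one grouped by the horizontal band they occupy, so that $h_{1,1}(\lambda) = \sum_i |\nu^{(i)}|$ by construction; the appearance of exactly these boxes should come from translating the hook-difference-one condition at a cell $(i,j)$ into the profile relation $\lambda'_j + j = \lambda_i + i + 1$ and checking that its solutions within each band fill out the claimed rectangle. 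The crux --- and, I suspect, the reason \cite{BF} turned to algebraic geometry --- is showing that the hook-difference-one cells really do decouple into independent rectangular families and that the core is well defined; I would attempt this by induction on $\lambda_1$ (or on $h_{1,1}(\lambda)$), peeling off a maximal rim of hook-difference-one cells at each step and verifying that the box dimensions transform as predicted. If no clean bijection is forthcoming, part (1) is exactly \cite[Theorems 1.2 and 1.3]{BF}, obtained there by reading $|\lambda|$ and $h_{1,1}(\lambda)$ off two gradings of the tangent space at the torus-fixed point indexed by $\lambda$ on the Hilbert scheme of points of $\mathbb{C}^2$; all told, I expect part (1) to be the real difficulty and part (2), once the $q^{\binom{\lambda_1}{2}}$ bookkeeping is organized, to be elementary but fiddly.
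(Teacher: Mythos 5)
Your proposal is a plan rather than a proof, and in both parts the step you yourself flag as the crux is exactly the step that is missing. For part (1), your intended decomposition $\lambda \mapsto (\mu;\nu^{(1)},\nu^{(2)},\dots)$ with $|\lambda|=\binom{\mu_1}{2}+|\mu|$ and $h_{1,1}(\lambda)=\sum_i|\nu^{(i)}|$ has the right shape --- in the paper the indexing partition $\mu$ on the right-hand side is precisely the $2$-diagonal pattern of $\lambda$ (the sequence of anti-diagonal lengths, which necessarily begins $1,2,\dots,\mu_1-1$ and then continues with $\mu_1,\mu_2,\dots$, accounting for the $\binom{\mu_1}{2}$), so the outer sum is a sum over $2$-diagonal equivalence classes. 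But you do not supply the mechanism by which the hook-difference-one cells decouple into independent $t$-binomial families, and your proposed grouping by horizontal bands with induction on $\lambda_1$ is not obviously workable; your stated fallback is to cite \cite{BF}, which is not a proof. The paper resolves this crux with the Loehr--Warrington machinery: a partition is encoded by its departure words along anti-diagonal levels, $h_{1,1}(\lambda)$ becomes the total number of $EN$-inversions in these words (Proposition \ref{hookpro}), and within a fixed diagonal class the words are arbitrary subject only to each nonempty word at a positive level ending in $E$ (Proposition \ref{dep}), which is exactly why the inversion generating function is $\prod_j\bigl[{E_j+N_j-1\atop N_j}\bigr]_t$. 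Some such device is indispensable, because the cells of hook difference one contributing to a single binomial factor live on a common anti-diagonal, not in a common horizontal band.

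For part (2), your transfer-matrix/Bailey sketch is asserted rather than executed, and the two concrete mechanisms you describe do not hold up. Every Gaussian binomial in the product carries base $t$, so there is no alternation of steps that would make ``the $t$ land only on even exponents''; and the weight $q^{\binom{\lambda_1}{2}}$ cannot be stripped off, the recursion run, and the weight reattached after the limit, because $\binom{\lambda_1}{2}$ depends nonlinearly on the sum of all the difference coordinates and therefore couples to every inner summation. It is worth knowing that the paper itself does not prove (\ref{mainthm2}) for general $t$ by any combinatorial or $q$-series argument: it proves only the $t=1$ specialization (Corollary \ref{multisumcor}, where the multisum simply counts all partitions by size) and the coefficients of $t^0,t^1,t^2$ (Theorem \ref{k=0,1,2}), and otherwise relies on the algebro-geometric proof in \cite{BF}. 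Your expectation that (2) is ``elementary but fiddly'' while (1) is ``the real difficulty'' is therefore backwards relative to what is actually achievable here: (1) admits a clean combinatorial proof, while the full two-parameter identity (2) is the part that remains open combinatorially.
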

In this paper, we attempt a combinatorial proof of these results. We give a combinatorial proof of (\ref{mainthm1}) in Section \ref{multisum_section}, as well as prove a generalization of this identity in Theorem \ref{multisumthm}. We prove a similar generalization of (\ref{mainthm2}) in the case where $t=1$ in Corollary \ref{multisumcor}. We also present some partial results towards a combinatorial proof of the full version of (\ref{mainthm2}) in Section \ref{2core_section}.

\subsection{$q$-Catalan Numbers}
We also explore a connection with the $q$-Catalan numbers. By restricting the partitions which we sum over in the left side of (\ref{mainthm1}) to those inside the $n$ by $n$ triangle in the upper left of the plane, we may exploit the duality between Young diagrams and Dyck paths to prove the following formula for the (Carlitz) $q$-Catalan numbers.

\begin{theorem}\label{thm1.2}
\begin{align*}
q^{{n}\choose {2}}C_n\left(\frac{1}{q}\right)=\sum_{\lambda=(\lambda_1,\lambda_2,\cdots,\lambda_k),\atop \lambda_1+k\leq n}\prod_{i \geq 1} {\lambda_i-\lambda_{i+2}+1 \choose \lambda_{i+1}-\lambda_{i+2}}q^{{\lambda_1\choose 2}+|\lambda|},
\end{align*}
where $C_n(q)$ is the $n$-th (Carlitz) $q$-Catalan number.
\end{theorem}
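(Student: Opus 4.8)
The plan is to combine two ingredients: the classical bijection between Young diagrams inside a staircase and Dyck paths, and the $t=1$ specialization of Theorem~\ref{mainthm}(\ref{mainthm1}) cut down to those Young diagrams. Write $\delta_n=(n-1,n-2,\dots,2,1)$, so $|\delta_n|=\binom n2$, and note that $\lambda\subseteq\delta_n$ (equivalently, the Young diagram of $\lambda$ lies inside the ``$n$ by $n$ triangle'') if and only if $\lambda_j+j\le n$ for every $j$ with $\lambda_j\ge 1$, i.e.\ if and only if $\max_{j:\,\lambda_j\ge 1}(\lambda_j+j)\le n$. Partitions $\lambda\subseteq\delta_n$ biject with Dyck paths of semilength $n$ in the standard way (reading off the boundary of $\lambda$ inside $\delta_n$), and under this bijection $\binom n2-|\lambda|$ becomes the area between the path and the main diagonal; recalling that $C_n(q)$ is the generating function for Dyck paths of semilength $n$ by area,
\begin{equation*}
q^{\binom n2}C_n\!\left(\tfrac1q\right)=\sum_{\lambda\subseteq\delta_n}q^{|\lambda|}.
\end{equation*}
It therefore suffices to prove
\begin{equation*}
\sum_{\lambda\subseteq\delta_n}q^{|\lambda|}
=\sum_{\substack{\mu=(\mu_1,\dots,\mu_k)\\ \mu_1+k\le n}}\ \prod_{i\ge 1}\binom{\mu_i-\mu_{i+2}+1}{\mu_{i+1}-\mu_{i+2}}\,q^{\binom{\mu_1}{2}+|\mu|}.
\end{equation*}

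This identity is exactly Theorem~\ref{mainthm}(\ref{mainthm1}) at $t=1$, with the left side truncated to $\lambda\subseteq\delta_n$ and the right side truncated to partitions $\mu$ of length $k$ with $\mu_1+k\le n$. To match the two truncations term by term I would use the bijective proof of Theorem~\ref{mainthm}(\ref{mainthm1}) from Section~\ref{multisum_section}, which sorts all partitions $\lambda$ into classes indexed by partitions $\mu$: the class of $\mu$ has $\prod_{i\ge1}\binom{\mu_i-\mu_{i+2}+1}{\mu_{i+1}-\mu_{i+2}}$ members, each of size $\binom{\mu_1}{2}+|\mu|$, and the product of $t$-binomials records the distribution of $h_{1,1}$ over that class. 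The extra input needed is the geometric invariant that every $\lambda$ in the class of $\mu=(\mu_1,\dots,\mu_k)$ satisfies
\begin{equation*}
\max_{j:\,\lambda_j\ge 1}(\lambda_j+j)=\mu_1+k .
\end{equation*}
Granting this, $\lambda\subseteq\delta_n\iff\mu_1+k\le n$, so each class lies wholly inside or wholly outside $\{\lambda:\lambda\subseteq\delta_n\}$; summing the $t=1$ weights over the surviving classes yields the displayed identity, hence the theorem. (This is consistent with the shape of the exponent: $\binom{\mu_1}2+|\mu|=|\mu'+\delta_{\mu_1}|$, and the diagram $\mu'+\delta_{\mu_1}$, gotten by stacking a staircase of height $\mu_1$ onto the conjugate $\mu'$, has anti-diagonal extent exactly $\mu_1+k$.)

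The step I expect to be the main obstacle is the invariant $\max_{j}(\lambda_j+j)=\mu_1+k$, which really asks how the bijection of Section~\ref{multisum_section} rebuilds $\lambda$ from $\mu$ together with its decoration. One has to check that the part of the construction responsible for the factor $q^{\binom{\mu_1}2+|\mu|}$ already attains anti-diagonal $\mu_1+k$, and that none of the $\prod_i\binom{\mu_i-\mu_{i+2}+1}{\mu_{i+1}-\mu_{i+2}}$ ways of re-inserting the hook-difference-one cells ever pushes a cell onto a strictly larger anti-diagonal; I expect this to come down to a short row-by-row bookkeeping internal to the bijection. An alternative that sidesteps quoting the internal structure of that proof is to build the box-restricted bijection directly, peeling $\lambda\subseteq\delta_n$ down to $\mu$ as in Section~\ref{multisum_section} while checking at each step that the constraint ``anti-diagonal $\le n$'' is preserved in both directions; the content is the same.
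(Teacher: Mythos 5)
Your proposal is correct and follows essentially the same route as the paper: identify $q^{\binom n2}C_n(1/q)$ with the size generating function of partitions inside $\delta_n$ via Dyck paths, then restrict the $t=1$ identity of Theorem~\ref{multisumthm} to those partitions by observing that each class on the right-hand side lies wholly inside or wholly outside the staircase. The invariant you flag as the main obstacle, $\max_j(\lambda_j+j)=\mu_1+k$, is in fact immediate rather than a bookkeeping exercise: the classes of Section~\ref{multisum_section} are by definition the $2$-diagonal equivalence classes, $\max_j(\lambda_j+j)$ is just the index of the last nonzero diagonal, and Proposition~\ref{diag} identifies that index as $\mu_1+l(\mu)$.
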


This leads us to define a new $q,t$-Catalan number in a natural way.

\begin{definition}
We define $C_n\left(\frac{1}{q},t\right)$ by
\begin{align*}
q^{-{{n}\choose {2}}}C_n\left(q,t\right)=\sum_{\lambda=(\lambda_1,\lambda_2,\cdots,\lambda_k),\atop \lambda_1+k\leq n}\prod_{i \geq 1} \left[\lambda_i-\lambda_{i+2}+1 \atop \lambda_{i+1}-\lambda_{i+2}\right]_t q^{-{\lambda_1\choose 2}-|\lambda|}.
\end{align*}
Replacing $q$ by $\frac{1}{q}$, we obtian the following more familiar expression,
\begin{align*}
q^{{n}\choose {2}}C_n\left(\frac{1}{q},t\right)=\sum_{\lambda=(\lambda_1,\lambda_2,\cdots,\lambda_k),\atop \lambda_1+k\leq n}\prod_{i \geq 1} \left[\lambda_i-\lambda_{i+2}+1 \atop \lambda_{i+1}-\lambda_{i+2}\right]_t q^{{\lambda_1\choose 2}+|\lambda|}.
\end{align*}
so that the coefficent of $t^i q^k$ is the number of partitions of size ${n \choose 2 } - k$ with $i$ hook difference one cells that fit inside the $n$ by $n$ triangle in the upper left of the plane.
\end{definition}
We discuss this in more detail in Section \ref{catalan_section}.

\subsection{$2$-cores and connections with other statistics on Young diagrams}
It is well known that
\begin{align*}
\prod_{i\geq 1}\frac{1}{(1-q^{2i-1})(1-tq^{2i})} = \sum_{\lambda \in \mathcal{P}} t^{a_2 (\lambda)} q^{|\lambda|},
\end{align*}
where $a_2 (\lambda)$ is the largest part of $\lambda$ that appears at least $2$ times. This allows us to interpret (\ref{mainthm1}) and (\ref{mainthm2}) as stating that $h_{1,1}$ and $a_2$ are equidistributed among the partitions of $n$. We provide a proof of a refinement of this statement in the case $h_{1,1} (\lambda) = a_2 (\lambda) = 0,1,2$ in Proposition \ref{k=0,1,2}.

Many of our results rely on the notion of the \textbf{$2$-core} of a partition, which will be defined in Section \ref{2core_section}. For now, we describe the $2$-core of a partition $\lambda$ as the partition which remains after removing all dominos from $\lambda$'s Young diagram after whose removal $\lambda$ remains a partition. It is a perhaps surprising fact that this is a well-defined process. It is easy to see that the possible $2$-cores are exactly the partitions of staircase shape $\{k,k-1,k-2,\cdots,3,2,1\}$, where $k$ is a non-negative integer.

\begin{example}
The 2-core of $\lambda=\{8,7,5,3,2,1\}$ is $\{4,3,2,1\}$.
\end{example}

In Section \ref{2core_section}, we prove the following refinement of the equidistribution of $h_{1,1}$ and $a_2$ in a special case.

\begin{theorem}\label{2corethm}
The statistics $h_{1,1}$ and $a_2$ are equidistributed on the set of partitions of $n$ with $2$-core size ${k+1 \choose 2}$ for all non-negative integers $n$ and $k$ such that $k \geq \frac{n-{k+1 \choose 2}}{2}$.
\end{theorem}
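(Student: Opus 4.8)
The plan is to reduce the equidistribution statement for partitions of $n$ with $2$-core of size $\binom{k+1}{2}$ to a finite statement about partitions fitting inside a bounded region, where the $q$-Catalan identity of Theorem~1.2 (in its refined $q,t$ form) can be brought to bear. First I would recall the combinatorial model of the $2$-core and the $2$-quotient: removing dominoes from $\lambda$'s Young diagram until only the staircase $\{k, k-1, \dots, 2, 1\}$ of size $\binom{k+1}{2}$ remains, and tracking how $h_{1,1}$ and $|\lambda|$ behave under this process. The condition $k \geq \frac{n - \binom{k+1}{2}}{2}$ says exactly that the number of dominoes removed, $\frac{n - \binom{k+1}{2}}{2}$, is at most $k$; equivalently, the partitions in question are precisely those whose Young diagram sits inside a region controlled by the staircase of height $k$, so that they correspond (via the Young diagram / Dyck path duality already exploited in Section~\ref{catalan_section}) to lattice paths inside an $n' \times n'$ triangle for an appropriate $n'$ depending on $n$ and $k$.

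The key steps, in order, would be: (1) set up the $2$-core/$2$-quotient bijection and verify that, under the hypothesis $k \geq \frac{n - \binom{k+1}{2}}{2}$, the relevant partitions are exactly those fitting inside the triangle of the appropriate size, so that the generating function $\sum t^{h_{1,1}(\lambda)} q^{|\lambda|}$ over this set matches the right-hand side of the refined $q,t$-Catalan identity after a suitable substitution; (2) use Theorem~\ref{thm1.2} (and the definition of $C_n(1/q,t)$ immediately following it) to rewrite this generating function in closed form as $q^{\binom{n'}{2}} C_{n'}(1/q, t)$; (3) separately compute the generating function $\sum t^{a_2(\lambda)} q^{|\lambda|}$ over the same set of partitions, using the product formula $\prod_{i \geq 1} \frac{1}{(1-q^{2i-1})(1-tq^{2i})}$ truncated appropriately (the staircase constraint caps the number of parts, hence caps $a_2$), and check it yields the same polynomial; (4) conclude equidistribution by comparing coefficients.

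The main obstacle I anticipate is step (1): carefully matching the bookkeeping between the $2$-core removal process and the triangle-containment condition. One must show that subtracting off the staircase $2$-core and passing through the $2$-quotient sends $h_{1,1}(\lambda)$ to a statistic on a smaller partition that is still "number of hook difference one cells" (up to a predictable shift coming from cells straddling the core), and that the arithmetic constraint $k \geq \frac{n-\binom{k+1}{2}}{2}$ translates cleanly into a single inequality $\lambda_1 + k' \leq n'$ of the type appearing in Theorem~\ref{thm1.2}. A secondary subtlety is that $a_2$ does not obviously transform nicely under domino removal, so for step (3) I would instead work directly: enumerate partitions of $n$ with the prescribed $2$-core by their multiset of part sizes, observe that the truncation forced by the core size exactly matches the truncation in the product formula, and extract the coefficient of $t^j$ as a count of partitions with $a_2 = j$. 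Matching these two truncated generating functions termwise is where the real content lies; once both sides are expressed as the same explicit finite sum, the equidistribution follows immediately.
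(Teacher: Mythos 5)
Your step (1) does not go through, and it is the load-bearing step. The set of partitions of $n$ with $2$-core size $\binom{k+1}{2}$ is not the set of partitions fitting inside an $n'\times n'$ triangle for any $n'$: the triangle condition is $\lambda_1+l(\lambda)\leq n'$, which is insensitive to the $2$-core. For instance $(2)$, $(1,1)$ and $(1)$ all fit in the $3\times 3$ triangle, but the first two have empty $2$-core while the third has $2$-core $\{1\}$; conversely, partitions with a fixed large $2$-core can have arbitrarily long first row or first column. So Theorem \ref{thm1.2} and the $q,t$-Catalan numbers cannot be brought to bear on $\mathcal{P}_j(n)$ by a containment argument. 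Even granting some substitute for step (1), your steps (3)--(4) ask for a closed form of $\sum t^{a_2(\lambda)}q^{|\lambda|}$ restricted to a fixed $2$-core (the product $\prod_i (1-q^{2i-1})^{-1}(1-tq^{2i})^{-1}$ does not ``truncate'' under a $2$-core restriction, and a prescribed $2$-core does not cap $a_2$), together with a termwise match against the $h_{1,1}$ generating function --- which you explicitly defer as ``where the real content lies.'' That deferred comparison is the entire theorem, so the proposal has no engine.

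The paper's actual route is quite different and avoids generating-function comparisons entirely: it interposes the statistic $h_{2,0}$. First, Theorem \ref{bij1} gives an explicit $2$-core-preserving bijection (separate the parts of odd multiplicity, pair up the repeated parts into even parts, transpose) showing that $h_{2,0}$ and $a_2$ are equidistributed on $\mathcal{P}_j(n)$ for \emph{all} $n$ and $j$, with no largeness hypothesis. Second, Proposition \ref{pp2} (a special case of Theorem \ref{bigcore}, proved by analyzing which portions of the edge sequence of $\lambda$ can realize a hook of the relevant type) shows that when the largest part $k$ of the $2$-core is at least the $2$-quotient size --- exactly the hypothesis $k\geq\frac{n-\binom{k+1}{2}}{2}$ --- one has $h_{1,1}(\lambda)=a(\lambda_{i})$ and $h_{2,0}(\lambda)=a(\lambda_{j})$ where $(\lambda_i,\lambda_j)$ are the two components of the $2$-quotient; swapping the two components is then a bijection of $\mathcal{P}_j(n)$ interchanging the distributions of $h_{1,1}$ and $h_{2,0}$. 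Composing the two equidistributions yields Theorem \ref{2corethm}. If you want to salvage your outline, the piece to develop is precisely the edge-sequence analysis of how $h_{1,1}$ reads off the $2$-quotient when the core is large; the Dyck-path/Catalan machinery is not the right tool here.
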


The condition in the theorem can be restated as saying that the largest part of the $2$-core of the partition is no smaller than the number of dominoes we needed to remove from the partition to obtain the $2$-core. We conjecture that this condition is in fact unnecessary.

\begin{conjecture}
The statistics $h_{1,1}$ and $a_2$ are equidistributed on the set of partitions of $n$ with $2$-core $\{k,k-1,\cdots, 1\}$ for all non-negative integers $n$ and $k$.
\end{conjecture}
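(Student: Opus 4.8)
Set $\delta_k=\{k,k-1,\dots,1\}$, so the conjecture asserts that $h_{1,1}$ and $a_2$ are equidistributed on the partitions whose $2$-core is $\delta_k$. The plan is to upgrade the generating-function identities of Theorem \ref{mainthm} to identities refined by the $2$-core. For a partition $\lambda$, write $k(\lambda)$ for the number of parts of its $2$-core (so the core has size ${k(\lambda)+1 \choose 2}$) and $w(\lambda)$ for the number of dominoes removed in passing to the core, so that $|\lambda|={k(\lambda)+1 \choose 2}+2w(\lambda)$. Theorem \ref{mainthm} together with the well-known product formula $\sum_\lambda t^{a_2(\lambda)}q^{|\lambda|}=\prod_{i\geq 1}\frac{1}{(1-q^{2i-1})(1-tq^{2i})}$ already gives $\sum_\lambda t^{h_{1,1}(\lambda)}q^{|\lambda|}=\sum_\lambda t^{a_2(\lambda)}q^{|\lambda|}$, and dividing by the triangular-number series $\sum_{k\geq 0}q^{{k+1 \choose 2}}=\prod_{i\geq 1}\frac{1-q^{2i}}{1-q^{2i-1}}$ isolates the shape the refined answer must take. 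Concretely, I would aim to prove
\begin{align*}
\sum_{\lambda}t^{h_{1,1}(\lambda)}q^{|\lambda|}z^{k(\lambda)} &= \Big(\sum_{k\geq 0}z^{k}q^{{k+1 \choose 2}}\Big)\prod_{i\geq 1}\frac{1}{(1-q^{2i})(1-tq^{2i})}\\
&= \sum_{\lambda}t^{a_2(\lambda)}q^{|\lambda|}z^{k(\lambda)},
\end{align*}
after which extracting the coefficient of $z^{k}q^{n}$ yields the conjecture. Equivalently, one wants: for each $k$, the distribution of each statistic over partitions with $2$-core $\delta_k$ and $2$-weight $m$ is the $k$-independent polynomial $F_m(t)$ determined by $\sum_{m\geq 0}F_m(t)u^{m}=\prod_{i\geq 1}\frac{1}{(1-u^{i})(1-tu^{i})}$; equivalently $F_m(t)=\sum t^{\ell(\mu)}$, summed over pairs of partitions $(\mu,\nu)$ with $|\mu|+|\nu|=m$ and $\ell(\mu)$ the number of parts of $\mu$. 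A direct check on small cases (giving $F_0=1$, $F_1=1+t$, $F_2=2+2t+t^{2}$) is consistent with this for both statistics.

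For the $a_2$ side I would argue on the $2$-runner abacus. Fixing a bead count of the correct parity, the core $\delta_k$ pins down the ground state of the two runners, and a partition $\lambda$ with that core is recorded by its $2$-quotient $(\mu,\nu)$, with $w(\lambda)=|\mu|+|\nu|$. A repeated part $\lambda_a=\lambda_{a+1}$ of $\lambda$ is precisely a pair of consecutive beads in the Maya diagram of $\lambda$, hence a bead on one runner immediately following, in the merged diagram, a bead on the other runner; pushing this through to the abacus coordinates of $\mu$ and $\nu$ should yield an explicit formula for $a_2(\lambda)$ in terms of $k$, $\mu$ and $\nu$ (with $a_2$ read off from the rightmost such pair). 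What then remains is the combinatorial identity that $\sum t^{a_2}$ over the admissible pairs $(\mu,\nu)$ with $|\mu|+|\nu|=m$ equals $F_m(t)$ for every $k$; this is a $2$-core refinement of the classical but non-formal identity $\sum_\lambda t^{a_2(\lambda)}q^{|\lambda|}=\prod_{i\geq 1}\frac{1}{(1-q^{2i-1})(1-tq^{2i})}$, and I expect it to require a genuinely new, $k$-uniform bijection on pairs of partitions that converts the largest-repeated-part statistic into a count of parts.

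For the $h_{1,1}$ side there are two natural routes. The first parallels the $a_2$ analysis: a cell of $\lambda$ has hook difference one exactly when its hook length is even \emph{and} its leg exceeds its arm by exactly one, so under the standard correspondence between the even-hook cells of $\lambda$ and the cells of the $2$-quotient (which halves hook lengths) one can identify the contributing cells of $\mu$ and $\nu$, obtain a formula for $h_{1,1}$ in terms of $k,\mu,\nu$, and again reduce to a combinatorial identity. The second, probably cleaner, route is to refine the multisum on the right of Theorem \ref{mainthm}(\ref{mainthm1}) — or of its generalization, Theorem \ref{multisumthm} — by an internal parameter of the summation index that records the $2$-core: the exponent ${\lambda_1 \choose 2}$ appearing there (with $\lambda$ now the summation variable) has exactly the shape of the core-size exponent ${k+1 \choose 2}$, which strongly suggests that fixing the largest part of the summation index isolates partitions with a prescribed $2$-core. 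If so, the $z$-refinement can be read directly off the combinatorial proof in Section \ref{multisum_section}, and a $t=1$ version likewise off Corollary \ref{multisumcor}.

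The main obstacle is the $a_2$ half. The statistic $h_{1,1}$ is essentially local and feeds into the paper's multisum machinery, but $a_2$ — the largest \emph{repeated} part — is global, and even its unrefined equidistribution with a count of parts is a bona fide combinatorial fact rather than a formal manipulation; producing a bijection that is uniform in $k$ is, I expect, the crux of the entire argument. One tempting shortcut would be to show separately that each of the two distributions is independent of $k$ at fixed $2$-weight, after which Theorem \ref{2corethm}, applied with $k$ large, would immediately conclude; but establishing that $k$-independence appears to need the same bijective input and so is unlikely to be a genuine simplification.
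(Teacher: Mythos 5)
There is a genuine gap here --- and an unavoidable one, since the statement you are addressing is stated in the paper as a \emph{conjecture} (its refinement is Conjecture \ref{cj1}) and is not proved there either. Your submission is a program rather than a proof, and you say as much: the ``genuinely new, $k$-uniform bijection'' you identify as the crux is exactly what is missing. Your proposed refinement is the right target --- the polynomials $F_m(t)$ you define via $\prod_{i\geq 1}(1-u^i)^{-1}(1-tu^i)^{-1}$ are precisely the numbers $A(n,m)$ of Conjecture \ref{cj1} --- but no part of your outline closes the gap. Moreover, you have located the difficulty on the wrong side. The $a_2$ half of your program is in fact carried out in the paper: Theorem \ref{bij1} gives an explicit $2$-core-preserving bijection converting $a_2$ into $h_{2,0}$ (split off the odd-multiplicity parts, fuse the remaining repeated parts into even parts, transpose), and Theorem \ref{genhook20} computes $\sum_{\mu\in\mathcal{P}_j}t^{h_{2,0}(\mu)}q^{|\mu|}=q^{\binom{2j}{2}}\prod_{i\geq1}(1-q^{2i})^{-1}(1-tq^{2i})^{-1}$, which is exactly the coefficient of $z^{k}$ in your proposed identity. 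The genuinely open half is $h_{1,1}$: Corollary \ref{generatingfunction} gives its distribution on each $2$-diagonal class as a product of $t$-binomials, and Lemma \ref{increasecore} gives a quotient-size-preserving bijection between the classes attached to different cores, but that bijection does \emph{not} preserve the $h_{1,1}$-generating polynomial, and the resulting $q$-binomial identities are ones the paper explicitly says it has ``no chance of solving in the general case.'' Only the large-core regime (Theorem \ref{2corethm}, via Theorem \ref{bigcore}) and the coefficients $t^r$ with $r\leq 2$ (Proposition \ref{012prop}) are actually established.

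One concrete misstep in your $h_{1,1}$ route: fixing the largest part $\lambda_1$ of the summation index in Theorem \ref{mainthm}(\ref{mainthm1}) does \emph{not} isolate partitions with a prescribed $2$-core. By Remark \ref{2core}, the core is determined by the alternating sum $|\lambda|_a$ of the diagonal pattern, not by $\lambda_1$: the indices $(1)$ and $(1,1)$ both have $\lambda_1=1$, yet the corresponding partitions have $2$-cores $\{1\}$ and $\emptyset$ respectively. The correct refinement of the multisum by core is the unnumbered theorem following Remark \ref{2core}, which restricts to $|\lambda|_a=j$; the problem is that the $t$-refined version of that restricted sum is precisely where the argument stalls. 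Your closing ``tempting shortcut'' --- prove $k$-independence of each distribution at fixed $2$-weight and then invoke the large-core case --- is in fact the paper's own proposed strategy, already complete for $a_2$; so the one thing your proof still needs is the $k$-independence of the $h_{1,1}$-distribution, and nothing in your outline supplies it.
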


This refines Theorem \ref{mainthm}. We propose a generalization of this as Conjecture \ref{mcoreconj}, and prove an analogous special case in Theorem \ref{bigcore}.

\subsection{Organization of the Paper}
In Section \ref{def_section}, we introduce some definitions and notation. In Section \ref{multisum_section}, we introduce a bijection between partitions and Eulerian tours of certain multigraphs which was presented in \cite{LW}, and use this to prove a generalization of (\ref{mainthm1}), as well as a generalization of (\ref{mainthm2}) for the $t=1$ case. In Section \ref{catalan_section}, we discuss a connection with the $q$-Catalan numbers. In Section \ref{2core_section}, $2$-cores and $2$-quotients of partitions are introduced and we prove Theorem \ref{2corethm} in addition to a host of other results. In Section \ref{mcore_section}, we generalize the results in Section \ref{2core_section} to the $m$-core case.

\subsection{Acknowledgements}
This research was conducted at the 2013 summer REU (Research Experience for Undergraduates) program at the University of Minnesota, Twin Cities, and was supported by NSF grants DMS-1001933, DMS-1067183, and DMS-1148634. The authors would like to thank Joel Lewis and Professors Gregg Musiker, Pavlo Pylyavskyy and Dennis Stanton, who directed the program, for their help. We would especially like to express the warmest thanks to our mentor Dennis Stanton, for his dedicated guidance throughout the research process. We would also like to thank Alex Csar for his help in editing this paper.

\section{Definitions}\label{def_section}
Throughout the paper, $\lambda$ and $\mu$ will denote partitions. We will view partitions either as a non-increasing sequence of non-negative integers $\lambda_1 \geq \lambda_2 \geq \lambda_3, \geq \cdots$, finitely many of which are nonzero, or a Young diagram in the English notation. Let $\lambda^T$ denote the conjugate partition of $\lambda$, $a(\lambda)$ denote the largest part of $\lambda$, and $l(\lambda)$ denote the number of nonzero parts of $\lambda$.

\subsection{Statistics on cells in Young diagrams} Given a cell $v$ in a Young diagram, we define the following:
\begin{itemize}
\item The \textbf{index} of $v$ is $(i, j)$ if $v$ is in row $i$ and column $j$. We often abuse the notation and say $v = (i, j)$.
\item The \textbf{arm} of $v$ is the set of cells in its row to its right; its cardinality is the \textbf{arm length}, denoted by $a_v$.
\item The \textbf{leg} of $v$ is the set of cells in its column below it; its cardinality is the \textbf{leg length}, denoted by $l_v$.
\item The \textbf{hook} of $v$ is the union of $v$ and its arm and leg; its cardinality is the \textbf{hook length} of $v$, which equals $a_v+l_v+1$.
\item Let $\alpha,\beta \in \mathbb{Z}$. The \textbf{$(\alpha, \beta)$-label} of $v = (i, j)$ is $\alpha i + \beta j$. (The modifier $(\alpha, \beta)$ is dropped if it is clear in context.)
\end{itemize}

\subsection{Statistics on partitions} Given a partition $\lambda$, we define the following:
\begin{itemize}
\item The \textbf{size} $|\lambda|$ is the number of cells in $\lambda$, or equivalently the sum of the parts of $\lambda$.
\item $a_m(\lambda)$ is the size of the largest part in $\lambda$ with multiplicity at least $m$. We say that $\lambda$ is \textbf{$m$-regular} if $a_m(\lambda) = 0$, and say that $\lambda$ is \textbf{$m$-restricted} if $a_m(\lambda^T) = 0$.
\item The $k$-th \textbf{$(\alpha, \beta)$-diagonal} of $\lambda$ is the set of cells in $\lambda$ whose $(\alpha, \beta)$-label is $k$; its cardinality is the $k$-th \textbf{$(\alpha, \beta)$-diagonal length}, denoted by $d^{(\alpha, \beta)}_{k}(\lambda)$. We call the sequence $\{d^{(\alpha, \beta)}_{k}(\lambda)\}$ the \textbf{$(\alpha, \beta)$-diagonal pattern} of $\lambda$. (The modifier $(\alpha, \beta)$ is dropped if it is clear in context.) In the case $\alpha = m-1, \beta = 1$, we change the modifier $(\alpha, \beta)$ to $m$.
\item Let $\alpha,\beta$ be non-negative integers, not both zero. Define $H_{\alpha, \beta}(\lambda)=\{v \in \lambda |\alpha l_v =\beta(a_v+1), \quad (\alpha+\beta)|a_v+l_v+1\}$ and $h_{\alpha, \beta}(\mu)=|H_{\alpha, \beta}(\mu)|$. Note that this generalizes our earlier definition of $h_{1,1}(\mu)$ as the number of hook difference one cells of $\lambda$, which are cells with leg and arm difference one.
\end{itemize}

Denote $\mathcal{P}$ the set of all partitions. Given a nonnegative integer valued statistic on partitions $f: \mathcal{P} \rightarrow \mathbb{N}$ and a set $S \subset \mathcal{P}$, we call the sequence $c_i(f, S) = |\{\lambda \in S | f(\lambda) = i\}|$ the \textbf{$f$-distribution of $S$}. We say that $f$ and $g$ are \textbf{equidistributed} over $S$ if $c_i(f, S) = c_i(g, S)$ for all $i \in \mathbb{N}$, and we say that $f$ is \textbf{identically distributed} over a family of sets $\mathcal{S}$ if $c_i(f, S_1) = c_i(f, S_2)$ for any $S_1, S_2 \in \mathcal{S}$ and $i \in \mathbb{N}$.

Given a statement $P$, we define $\chi(P) = 1$ if $P$ is true and $\chi(P) = 0$ if $P$ is false.

Given a power series $Q(x)$ and a non-negative integer $i$, we let $[x^i] Q(x)$ denote the coefficient of $x^i$ in $Q(x)$.

\section{A Multisum Formula for Partitions}\label{multisum_section}

Define an equivalence relation on $\mathcal{P}$ by $\lambda \sim \mu$ whenever $\lambda$ and $\mu$ have the same $m$-diagonal pattern, which we call the $m$-diagonal equivalence relation. In this section, we compute the generating function of the statistic $h_{m-1,1}$ for any given $m$-diagonal equivalence class as a product of $t$-binomial coefficients, which yields a proof of the following generalization of Theorem \ref{mainthm} (\ref{mainthm1}).

\begin{theorem}\label{multisumthm}
\begin{align*}
\sum_{\mu\in\mathcal{P}}t^{h_{m-1,1}(\mu)}q^{|\mu|}
=\sum_{\{s_i\} \in \mathcal{S}_m}
q^{(m-1)\binom{s_1}{2} + \sum s_i}
\prod_{j\geq 1}
\left[s_j - s_{j+m} + \chi (m-1 | j) \atop s_{j+1} - s_{j+m}\right]_t,
\end{align*}
where $\mathcal{S}_m$ is the set of sequences of nonnegative integers $\{s_i\}_{i > 0}$, finitely many of which are nonzero, such that
\begin{itemize}
\item $s_i \geq s_{i+1} - \chi (m-1 | i)$,
\item $s_i \geq s_{i+(m-1)}$.
\end{itemize}
Note that these conditions may be dropped if we assume a $t$-binomial coefficient with a negative entry to be zero. Also note that $\mathcal{S}_2 = \mathcal{P}$, so setting $m=2$ gives Theorem \ref{mainthm} (\ref{mainthm1}).
\end{theorem}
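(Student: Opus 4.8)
The plan is to establish a bijection between partitions $\mu$ and certain combinatorial objects indexed by the sequences $\{s_i\} \in \mathcal{S}_m$, under which the $m$-diagonal pattern of $\mu$ is encoded by $\{s_i\}$ and the statistic $h_{m-1,1}(\mu)$ is tracked by the product of $t$-binomial coefficients. Concretely, I would first fix an $m$-diagonal equivalence class, i.e.\ fix the $(m-1,1)$-diagonal lengths $d_k := d^{(m-1,1)}_k(\mu)$, and set $s_i$ to be an appropriate partial sum or reindexing of these diagonal lengths so that the constraints defining $\mathcal{S}_m$ are exactly the constraints that a sequence of diagonal lengths of an honest partition must satisfy (monotonicity within the bands determined by divisibility of the index by $m-1$). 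The power of $q$ should then come out as $(m-1)\binom{s_1}{2} + \sum s_i$ by a direct bookkeeping of $|\mu|$ in terms of the diagonal lengths. I expect to invoke the bijection with Eulerian tours of multigraphs from \cite{LW} (promised in the section intro) as the main engine: the multigraph is built from the fixed diagonal pattern, its Eulerian tours are in bijection with the partitions in the class, and the $h_{m-1,1}$ statistic becomes a local statistic on the tour.

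The core computation is then to show that, for a fixed $m$-diagonal class, the generating function $\sum_{\mu \text{ in the class}} t^{h_{m-1,1}(\mu)}$ equals $\prod_{j \geq 1} \left[s_j - s_{j+m} + \chi(m-1\mid j) \atop s_{j+1} - s_{j+m}\right]_t$. The key steps: (i) identify which cells of $\mu$ lie in $H_{m-1,1}(\mu)$ in terms of the local shape of $\mu$ along consecutive diagonals — a cell $v$ with $(m-1)l_v = a_v + 1$ and $m \mid (a_v + l_v + 1)$ should correspond to a particular kind of ``corner'' relating three consecutive diagonals; (ii) show that as $\mu$ ranges over the class, the way the $(j+1)$st diagonal of length $s_{j+1} - s_{j+m}$ (roughly) can be interleaved against the $j$th diagonal of available length $s_j - s_{j+m} + \chi(m-1\mid j)$ is counted, with the $t$-weight recording inversions/hook-difference-one cells, exactly by the Gaussian binomial $\left[\cdot \atop \cdot\right]_t$ via the standard fact that $\left[a+b \atop b\right]_t = \sum t^{\operatorname{inv}}$ over lattice paths or $\{0,1\}$-words; (iii) multiply over all $j$ since the choices on different diagonals are independent once the diagonal pattern is fixed. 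Summing over all classes $\{s_i\} \in \mathcal{S}_m$ and multiplying by the corresponding $q$-power gives the claimed identity.

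The main obstacle I anticipate is step (i)–(ii): pinning down the precise local characterization of hook-difference-one cells (in the generalized $h_{m-1,1}$ sense) in terms of three consecutive $(m-1,1)$-diagonals, and then verifying that the natural statistic on the interleaving of two adjacent diagonals is genuinely the inversion statistic whose generating function is the Gaussian binomial — including getting the off-by-one term $\chi(m-1\mid j)$ correct, which is where the divisibility condition $m \mid (a_v+l_v+1)$ in the definition of $H_{\alpha,\beta}$ enters. The divisibility condition means only some of the diagonals contribute a ``$+1$'' of slack, and bookkeeping this carefully across the boundary cases (small parts of $\mu$, the topmost diagonal contributing the $\binom{s_1}{2}$ term, trailing zeros) is the delicate part. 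Once $m=2$ is checked against Theorem \ref{mainthm}(\ref{mainthm1}) as a sanity check — where $\chi(m-1\mid j) = 1$ for all $j$ and $\mathcal{S}_2 = \mathcal{P}$ — the general case should follow the same template with the divisibility indicator inserted in the right places.
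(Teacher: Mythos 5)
Your plan follows essentially the same route as the paper: fix an $m$-diagonal equivalence class, invoke the correspondence of \cite{LW} between partitions and Eulerian tours encoded by departure words, identify $h_{m-1,1}(\mu)$ with the total number of $EN$-inversions in the departure words, count the admissible word sequences on each class by a product of $t$-binomial coefficients, and sum over classes weighted by $q^{|\mu|}$. The only detail worth correcting is your attribution of the term $\chi(m-1\mid j)$ to the divisibility condition in the definition of $H_{\alpha,\beta}$ --- that condition is automatic when $(\alpha,\beta)=(m-1,1)$ since $a_v+l_v+1=ml_v$; the $\chi(m-1\mid j)$ instead arises from the structure of valid north and east patterns, namely $E_j=\sum_{i=1}^{m-1}N_{j-i}$ together with the forced final letter $E$ in each nonempty departure word.
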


If we take $t=1$ in the above theorem, we obtain a generalization of Theorem \ref{mainthm} (\ref{mainthm2}) in the $t=1$ case.
\begin{corollary}\label{multisumcor}
\begin{align*}
\frac{1}{\prod_{i\geq 1}(1-q^i)}
=\sum_{\{s_i\} \in \mathcal{S}_m}
q^{(m-1)\binom{s_1}{2} + \sum s_i}
\prod_{j\geq 1}
\binom{s_j - s_{j+m} + \chi (m-1 | j)} {s_{j+1} - s_{j+m} },
\end{align*}
\end{corollary}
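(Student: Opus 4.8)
The plan is to deduce Corollary \ref{multisumcor} directly from Theorem \ref{multisumthm} by specializing $t=1$. First I would observe that setting $t=1$ collapses each $t$-binomial coefficient $\left[{a\atop b}\right]_t$ to the ordinary binomial coefficient $\binom{a}{b}$, so the right-hand side of Theorem \ref{multisumthm} becomes exactly the right-hand side claimed in the corollary, with the same index set $\mathcal{S}_m$ and the same exponent $q^{(m-1)\binom{s_1}{2}+\sum s_i}$. For the left-hand side, the same specialization turns $\sum_{\mu\in\mathcal{P}} t^{h_{m-1,1}(\mu)} q^{|\mu|}$ into $\sum_{\mu\in\mathcal{P}} q^{|\mu|}$, and then I would invoke the classical Euler identity
\begin{align*}
\sum_{\mu\in\mathcal{P}} q^{|\mu|} = \prod_{i\geq 1}\frac{1}{1-q^i},
\end{align*}
which holds as an identity of formal power series in $q$. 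Combining these two observations gives the corollary immediately.

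The only point that needs a word of care is legitimacy of the specialization: Theorem \ref{multisumthm} is an identity of formal power series in $q$ whose coefficients are polynomials in $t$ (for each fixed power of $q$, only finitely many terms on the right-hand side contribute, since $q^{(m-1)\binom{s_1}{2}+\sum s_i}$ forces $\sum s_i$ bounded and hence only finitely many sequences $\{s_i\}$ are relevant). Therefore one may substitute $t=1$ coefficientwise in $q$ without any convergence issue, and the resulting identity of formal power series in $q$ is exactly the statement of Corollary \ref{multisumcor}.

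There is essentially no obstacle here; the content is entirely in Theorem \ref{multisumthm}, and the corollary is a one-line consequence. If anything, the mild subtlety worth flagging is that the summation index set $\mathcal{S}_m$ is unchanged under $t=1$ — the constraints defining $\mathcal{S}_m$ do not involve $t$ — and that, as the theorem notes, one may freely drop the constraints $s_i\geq s_{i+1}-\chi(m-1\mid i)$ and $s_i\geq s_{i+(m-1)}$ provided one adopts the convention that a binomial coefficient with a negative lower or resulting-upper entry vanishes, so the form displayed in the corollary is consistent with that in the theorem.
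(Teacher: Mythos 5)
Your proposal is correct and matches the paper's own derivation exactly: the paper obtains Corollary \ref{multisumcor} precisely by setting $t=1$ in Theorem \ref{multisumthm} and identifying the left-hand side with Euler's product formula for the partition generating function. The extra care you take about the legitimacy of the specialization and the conventions on the index set $\mathcal{S}_m$ is sound but not needed beyond what the paper already asserts.
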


\begin{remark}
For the special case $m=2$, the above formula gives the number of Young diagrams of a $2$-diagonal equivalence class with diagonal pattern $\{d_i\}$. It is interesting to notice that these $2$-diagonal equivalence classes are the same as rook equivalence classes, and each mutiproduct in the above formula gives the number of Ferrers boards in corresponding rook equivalent class. Some references on rook sequence are \cite{GJW} and \cite{RS}.
\end{remark}

Using the same method, we also give a partial result towards proving Theorem \ref{mainthm} (\ref{mainthm2}).
\begin{theorem}\label{k=0,1,2}
For $r = 0, 1, 2$,
\begin{align*}
[t^r] \sum_{\lambda\in \mathcal{P}}\prod_{i\geq 1}\left[\lambda_i-\lambda_{i+2}+1 \atop \lambda_{i+1}-\lambda_{i+2}\right]_{t}q^{{\lambda_1\choose 2}+|\lambda|}= [t^r] \prod_{i\geq 1}\frac{1}{(1-q^{2i-1})(1-tq^{2i})}.
\end{align*}
\end{theorem}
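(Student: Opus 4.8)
The plan is to extract the coefficient of $t^r$ from both sides of the identity in Theorem~\ref{mainthm}~(\ref{mainthm2}) for $r = 0, 1, 2$ and match them directly as power series in $q$. On the right-hand side, $\prod_{i \geq 1} \frac{1}{(1-q^{2i-1})(1-tq^{2i})}$ expands (via the geometric series in the $t$-carrying factors) as $\left(\prod_{i\geq 1}\frac{1}{1-q^{2i-1}}\right) \cdot \prod_{i \geq 1}\sum_{m_i \geq 0} t^{m_i} q^{2i m_i}$, so $[t^r]$ of the right side is $\left(\prod_{i\geq 1}\frac{1}{1-q^{2i-1}}\right) \cdot e_r$-type sums: for $r=0$ it is $\prod_{i\geq 1}\frac{1}{1-q^{2i-1}}$; for $r=1$ it is that times $\sum_{i\geq 1} q^{2i}$; for $r=2$ it is that times $\sum_{i \geq j \geq 1} q^{2i+2j}$ (allowing $i=j$). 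These are elementary to write down explicitly.

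For the left-hand side, the key input is the interpretation already established in the paper: by Theorem~\ref{mainthm}~(\ref{mainthm1}), the coefficient of $t^r q^n$ in $\sum_\lambda \prod_i \left[\lambda_i - \lambda_{i+2}+1 \atop \lambda_{i+1}-\lambda_{i+2}\right]_t q^{\binom{\lambda_1}{2}+|\lambda|}$ equals the number of partitions $\mu$ of $n$ with $h_{1,1}(\mu) = r$. So it suffices to enumerate, for each $r \in \{0,1,2\}$, the partitions of $n$ having exactly $r$ cells of hook difference one, and to check that the resulting generating function in $q$ agrees with the right-hand side computed above. Concretely, I would prove: (i) $h_{1,1}(\mu) = 0$ iff $\mu$ is ``staircase-like'', i.e. $\mu_i - \mu_{i+1} \in \{0,1\}$ is not quite the right condition — rather one characterizes the no-hook-difference-one partitions and shows their generating function is $\prod_{i\geq 1}\frac{1}{1-q^{2i-1}}$; (ii) similarly characterize $h_{1,1}(\mu) = 1$ and $h_{1,1}(\mu)=2$ and compute their generating functions. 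Alternatively, and perhaps more cleanly, one can work directly with the multisum side using the diagonal-pattern description from Section~\ref{multisum_section}: the product of $t$-binomials contributes a power $t^r$ only through a bounded number of factors, each of which must contribute a $t$ from a term of the form $\left[d+1 \atop c\right]_t$ with small valuation, and one sorts the cases by how the $t$'s are distributed among the factors. This reduces $[t^r]$ of the multisum to a finite case analysis over which diagonals carry the hook-difference-one cells.

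The main obstacle is the bookkeeping for $r = 2$: one must carefully separate the case where the two hook-difference-one cells lie on the same $(1,1)$-diagonal (contributing via the $t^2$ coefficient of a single $t$-binomial, i.e. via $\left[d+1 \atop c\right]_t$ with $[t^2]$ nonzero, which happens only when $c \geq 2$ and $d - c \geq 1$ or similar) from the case where they lie on two distinct diagonals (a product of two linear-in-$t$ contributions), and show these combine to exactly $\left(\prod_{i\geq 1}\frac{1}{1-q^{2i-1}}\right)\sum_{i \geq j \geq 1} q^{2i+2j}$. Getting the ``$i = j$ allowed'' bookkeeping to come out right — i.e. that two hook-difference-one cells on the same diagonal correspond to the diagonal term $q^{2\cdot 2i} = q^{4i}$ with the correct multiplicity — is the delicate point, and I expect it to require the explicit formula for the generating function of an $m$-diagonal equivalence class in terms of $t$-binomials that is the technical heart of Section~\ref{multisum_section}. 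The cases $r = 0$ and $r=1$ should be comparatively routine once that machinery is in hand.
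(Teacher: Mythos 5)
Your reduction is sound as far as it goes: by Theorem \ref{multisumthm} (the $m=2$ case of which is Theorem \ref{mainthm}(\ref{mainthm1})), the coefficient of $t^r q^n$ on the left is indeed the number of partitions of $n$ with $h_{1,1}=r$, and your expansion of $[t^r]$ of the right-hand side (with $\sum_{i\geq j\geq 1} q^{2i+2j}$ for $r=2$, diagonal terms included) is correct. But the entire mathematical content of the theorem lies in the step you defer: proving that the number of partitions of $n$ with $h_{1,1}(\mu)=r$ is the coefficient of $q^n$ in $\bigl(\prod_{i\geq 1}(1-q^{2i-1})^{-1}\bigr)\cdot e_r$, where $e_r$ is your elementary-symmetric-type sum. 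You state this as items (i) and (ii) to be proved, and you correctly flag the $r=2$ bookkeeping as delicate, but you give no argument for any of it; as written the proposal is a plan for a proof rather than a proof, and the direct $q$-series identities you would need (e.g.\ that partitions with exactly one hook-difference-one cell have generating function $\prod(1-q^{2i-1})^{-1}\sum_i q^{2i}$) are not easier than the original statement.

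The paper closes this gap by a different device that you do not mention: it introduces the statistic $a_2$ (largest part repeated at least twice), whose generating function $\sum_\lambda t^{a_2(\lambda)}q^{|\lambda|}$ is classically equal to the right-hand side, so that $[t^r]$ of the right side counts partitions with $a_2=r$. The theorem then reduces to showing $|\{\mu : h_{1,1}(\mu)=r\}|=|\{\mu : a_2(\mu)=r\}|$ for $r=0,1,2$, and the paper proves the stronger statement that this holds within each $2$-diagonal equivalence class (Proposition \ref{k=0,1,2}) by explicit bijections on departure words: $h_{1,1}$ is the total number of inversions $EN$ in the departure words (Proposition \ref{hookpro}), $a_2$ is read off from the ``initial words,'' and for each $r\leq 2$ the finitely many inversion configurations are matched to the finitely many initial-word configurations. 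If you want to complete your approach, the missing idea is precisely this intermediary: replace the target $q$-series by a second combinatorial statistic with the same generating function, so that the $r=0,1,2$ cases become finite bijection problems rather than $q$-series identities.
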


To prove these results, we begin by describing some bijections between partitions and other combinatorial objects that were used in \cite{LW}.

\subsection{Border paths}
Fix coprime positive integers $\alpha, \beta$ such that $\alpha + \beta = m$. The $k$-th \textbf{level} is the line $\beta x -\alpha y = k$ in the coordinate plane. Let $R_n$ be the $\alpha n \times \beta n$ rectangle with vertices $(0,0)$, $(0, -\beta n)$, $(\alpha n, 0), (\alpha n, -\beta n)$. Let $\Delta_n$ be the triangle with vertices $(0,0)$, $(0, -\beta n)$, $(\alpha n, 0)$; its hypotenuse lies on $\alpha \beta n$-th level.

Place $\lambda$ in the coordinate plane so that the top-left vertex of its top-left cell is at the origin. Then the bottom-right vertex of a cell on the $k$-th $(\alpha, \beta)$-diagonal of $\lambda$ lies on the $k$-th level. Thus, $\lambda$ is contained in $\Delta_n$ if and only if $\alpha \beta n \geq k_\lambda$, where $k_\lambda$ is the maximal $(\alpha, \beta)$-label in $\lambda$.

Define the \textbf{order} $n_\lambda$ to be the smallest integer $n$ such that $\lambda$ is contained in $\Delta_n$, then $n_\lambda = \lceil k_\lambda /\alpha \beta\rceil$. Define the \textbf{border path} $Bdp(\lambda)$ to be the staircase walk in $R_{n_\lambda}$ that traces the shape of $\lambda$. By the choice of $n_\lambda$, $Bdp(\lambda)$ stays in $\Delta_{n_\lambda}$ but not in $\Delta_{n_\lambda - 1}$. Clearly, partitions are identified by their border paths.

\begin{example}\label{borderex}
Take $\alpha=3$ and $\beta=1$. The following is the border path of $\lambda=\{16,6,6,6,5\}$. In this example $k_\lambda =17$ and $n_\lambda = 7$, so we place $\lambda$ in a $21\times 7$ triangle.
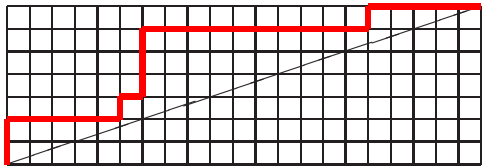
\begin{figure}[H]
\begin{center}
\begin{picture}(21,10)

\multiput(0,0)(1,0){21}{\line(1,0){1}}
\multiput(0,0)(1,0){22}{\line(0,1){1}}
\multiput(0,1)(1,0){21}{\line(1,0){1}}
\multiput(0,1)(1,0){22}{\line(0,1){1}}
\multiput(0,2)(1,0){21}{\line(1,0){1}}
\multiput(0,2)(1,0){22}{\line(0,1){1}}
\multiput(0,3)(1,0){21}{\line(1,0){1}}
\multiput(0,3)(1,0){22}{\line(0,1){1}}
\multiput(0,4)(1,0){21}{\line(1,0){1}}
\multiput(0,4)(1,0){22}{\line(0,1){1}}
\multiput(0,5)(1,0){21}{\line(1,0){1}}
\multiput(0,5)(1,0){22}{\line(0,1){1}}
\multiput(0,6)(1,0){21}{\line(1,0){1}}
\multiput(0,6)(1,0){22}{\line(0,1){1}}
\multiput(0,7)(1,0){21}{\line(1,0){1}}

\linethickness{1pt}
\multiput(0,0)(3,1){7}{\line(3,1){3}}
\multiput(0,0)(1,1){8}{\line(1,1){1}}
{\color{red}
\linethickness{2pt}
\multiput(0,2)(1,0){5}{\line(1,0){1}}
\put(5,3){\line(1,0){1}}
\multiput(6,6)(1,0){10}{\line(1,0){1}}
\multiput(16,7)(1,0){5}{\line(1,0){1}}
\put(0,0){\line(0,1){1}}
\put(0,1){\line(0,1){1}}
\put(5,2){\line(0,1){1}}
\put(6,3){\line(0,1){1}}
\put(6,4){\line(0,1){1}}
\put(6,5){\line(0,1){1}}
\put(16,6){\line(0,1){1}}
}
\end{picture}
\end{center}
\caption{The border path of $\lambda$.}\label{graph1}
\end{figure}
\end{example}

\subsection{Eulerian tours of multigraphs}
Using the border path of $\lambda$, define a directed multigraph $M(\lambda)$, called the multigraph of $\lambda$, as follows: the vertex set $V(\lambda)$ consists of $i$ such that $Bdp(\lambda)$ intersects $(\alpha \beta n_\lambda - i)$-th level, and there is an edge from $i$ to $j$ for each step in $Bdp(\lambda)$ moving from $(\alpha \beta n_\lambda - i)$-th level to $(\alpha \beta n_\lambda - j)$-th level. Then $Bdp(\lambda)$, thus $\lambda$, can be naturally interpreted as an Eulerian tour $T(\lambda)$ on $M(\lambda)$ that begins and ends at $0$, from which we can recover $\lambda$.

For a directed multigraph $M$, we call an edge $e$ in $M$ a \textbf{north edge} if $e$ goes from some $i$ to $i+\alpha$ and an \textbf{east edge} if $e$ goes from some $i$ to $i-\beta$. Note that $M(\lambda)$ satisfy the following conditions:
\begin{itemize}
\item The vertex set $V(\lambda)$ is a finite subset of nonnegative integers containing $0$,
\item The edge multiset of $M(\lambda)$ consist of $\alpha n_\lambda$ east edges and $\beta n_\lambda$ north edges,
\item $M(\lambda)$ is \textit{balanced}, i.e. at each vertex there are as many incoming edges as outgoing edges,
\item $M(\lambda)$ is \textit{connected}, i.e. there is a path from any vertex to any other vertex,
\item There are at least $1 + \chi (\alpha | i)$ north edges leaving $i$ for some $0 \leq i < \alpha \beta$.
\end{itemize}
We call a multigraph that satisfies the above conditions a \textit{valid} multigraph of order $n_\lambda$. Note that the third and fourth conditions ensure the existence of an Eulerian tour. Given a valid multigraph $M$ of order $n$, an Eulerian tour $T$ on $M$ that begins and ends at $0$ can be naturally interpreted as a staircase walk in $R_n$ (by the second condition) that stays in $\Delta_n$ (by the first condition) but not in $\Delta_{n-1}$ (by the last condition), thus the border path of a partition $\mu$ of order $n$, from which we can recover $T$. Thus, partitions of order $n$ correspond to Eulerian tours on valid multigraphs of order $n$ that begin and end at $0$.

\subsection{Departure words and north patterns}
For a valid multigraph $M$, let $N_i(M)$ (resp. $E_i(M)$) to be the number of north edges (resp. east edges) leaving $i$ if $i$ is a vertex of $M$ and $0$ otherwise. Clearly, $\{N_i(M)\}, \{E_i(M)\}$ are uniquely determined by $M$ and vice versa. By abuse of notation, we define the \textbf{north pattern} $\{N_i(\lambda)\}$ (resp. \textbf{east pattern} $\{E_i(\lambda)\}$) of a partition $\lambda$ to be the sequence $\{N_i(M(\lambda))\}$ (resp. $\{E_i(M(\lambda))\}$).

Let $W(N^aE^b)$ denote the set of binary words on $a$ letters $N$ and $b$ letters $E$. The Eulerian tour $T(\lambda)$ is made more explicit by constructing a sequence of binary words $\{w^{i}(\lambda) \in W(N^{N_i(\lambda)}E^{E_i(\lambda)})\}$, called the \textbf{departure words} of $\lambda$, as follows. Starting with a sequence of empty words, we traverse the tour; every time the tour visits $i$, we append to the $i$-th word a letter $N$ if the next edge is a north edge and a letter $E$ if the next edge is an east edge. After the tour is completed, $w^{i}(\lambda)$ is defined as the $i$-th departure word. It is easy to see that $T(\lambda)$, and thus $\lambda$, can be recovered from $\{w^{i}(\lambda)\}$.

\begin{example}
The Young diagram of Example \ref{borderex} has to departure words $w^0=N$, $w^1=NE$, $w^2=ENE$, $w^3=NENEE$, $w^4=EEEE$, $w^5=EEE$, $w^6=ENE$, $w^7=E$, $w^8=E$, $w^9=NE$, $w^{10}=E$, $w^{11}=E$, $w^{12}=E$.
\end{example}

It is easy to compute the size $|\lambda|$, the diagonal pattern $\{d^{\alpha, \beta}_k(\lambda)\}$, and $h_{\alpha,\beta}(\lambda)$ from the sequence of departure words of $\lambda$, as we now show.

\begin{proposition}\label{area}
For any partition $\lambda$ with departure words $\{w^{i}(\lambda)\in W(N^{N_i}E^{E_i})\}$, we have
\begin{align*}|\lambda| = \alpha \beta \binom{n_{\lambda}}{2} +n_{\lambda} \sum_{j = 0}^{\beta - 1} \lfloor \alpha j / \beta \rfloor - \sum_{i} \lfloor i/\beta \rfloor N_i
\end{align*}
If $\alpha = m-1, \beta = 1$, this simplifies to
\begin{align*}
|\lambda|=(m-1){\sum N_i \choose 2}-\sum_{i\geq 0}i N_i
\end{align*}
\end{proposition}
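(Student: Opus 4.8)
The plan is to read $|\lambda|$ directly off the border path $Bdp(\lambda)$ and then rewrite the result. Write $N=n_\lambda$, and recall that $Bdp(\lambda)$ is a monotone lattice path from $(0,-\beta N)$ to $(\alpha N,0)$ using $\beta N$ north (up) steps and $\alpha N$ east (right) steps, that over the strip $j-1<x<j$ with $1\le j\le\lambda_1$ it runs at height $y=-(\lambda^T)_j$, and that together with the two coordinate-axis segments it bounds exactly the cells of $\lambda$. The first step is the identity
\[
|\lambda|=\sum_{e\text{ a north step of }Bdp(\lambda)}x(e),
\]
where $x(e)$ is the common $x$-coordinate of a north step $e$: at $x=j$ (for $j\ge 1$) the path makes exactly $(\lambda^T)_j-(\lambda^T)_{j+1}$ north steps, with the convention $(\lambda^T)_k=0$ for $k>\lambda_1$, while the north steps at $x=0$ contribute nothing, so $\sum_e x(e)=\sum_{j\ge1}j\bigl((\lambda^T)_j-(\lambda^T)_{j+1}\bigr)=\sum_{j\ge1}(\lambda^T)_j=|\lambda|$ by Abel summation. (Equivalently, this is Green's theorem $|\lambda|=\oint x\,dy$ applied to the bounding curve, where only the north steps contribute.)

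Next I would locate each north step inside $M(\lambda)$. After $Bdp(\lambda)$ has taken $e$ east and $n$ north steps it sits at $(e,-\beta N+n)$, which lies on the $(\alpha\beta N+\beta e-\alpha n)$-th level, hence at vertex $\alpha n-\beta e$ of $M(\lambda)$. Thus, if $e_{n'}$ denotes the number of east steps preceding the $(n'{+}1)$-st north step (which is also its $x$-coordinate, since the path starts at $x=0$ and north steps do not change $x$) and $i_{n'}$ the vertex from which that north step departs, then $i_{n'}=\alpha n'-\beta e_{n'}$, i.e. $e_{n'}=(\alpha n'-i_{n'})/\beta$. Summing over the north steps $n'=0,\dots,\beta N-1$ and regrouping the departure vertices (vertex $i$ is the tail of exactly $N_i$ north steps), the identity of the first step becomes
\[
|\lambda|=\sum_{n'=0}^{\beta N-1}\frac{\alpha n'-i_{n'}}{\beta}=\frac{1}{\beta}\left(\alpha\binom{\beta N}{2}-\sum_i iN_i\right).
\]

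It remains to match this against the claimed formula. Writing $i=\beta\lfloor i/\beta\rfloor+(i\bmod\beta)$ splits $\tfrac1\beta\sum_i iN_i$ into $\sum_i\lfloor i/\beta\rfloor N_i+\tfrac1\beta\sum_i(i\bmod\beta)N_i$. From $i_{n'}=\alpha n'-\beta e_{n'}$ we get $i_{n'}\equiv\alpha n'\pmod\beta$, and since $\gcd(\alpha,\beta)=1$ the residues $\alpha n'\bmod\beta$ take each value in $\{0,\dots,\beta-1\}$ exactly $N$ times as $n'$ runs over $\{0,\dots,\beta N-1\}$; hence $\sum_i(i\bmod\beta)N_i=N\binom{\beta}{2}$. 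Feeding this into the previous display, together with the elementary identity $\frac{\alpha}{\beta}\binom{\beta N}{2}-\alpha\beta\binom{N}{2}=\frac{\alpha N(\beta-1)}{2}$ and the classical lattice-point count $\sum_{j=0}^{\beta-1}\lfloor\alpha j/\beta\rfloor=\frac{(\alpha-1)(\beta-1)}{2}$ (valid because $\gcd(\alpha,\beta)=1$), yields exactly
\[
|\lambda|=\alpha\beta\binom{N}{2}+N\sum_{j=0}^{\beta-1}\lfloor\alpha j/\beta\rfloor-\sum_i\lfloor i/\beta\rfloor N_i.
\]
Finally, the case $\alpha=m-1,\ \beta=1$ is immediate: then $\lfloor i/\beta\rfloor=i$, the sum over $j$ is empty, and $\sum_iN_i=\beta N=N$, so $\alpha\beta\binom{N}{2}=(m-1)\binom{\sum_iN_i}{2}$, giving $|\lambda|=(m-1)\binom{\sum N_i}{2}-\sum_{i\ge0}iN_i$.

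The only genuinely delicate part is the geometric set-up in the first two steps: pinning down the exact shape and orientation of $Bdp(\lambda)$ (which axis segments it contains, in which direction it is traversed) and the precise dictionary between "number of north/east steps taken so far" and "current vertex of $M(\lambda)$". Everything downstream is bookkeeping together with two standard facts (Abel summation and the count of lattice points below a line of rational slope), so an off-by-one error in that set-up is essentially the only thing that could derail the argument.
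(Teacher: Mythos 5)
Your argument is correct; I checked the example $\lambda=\{16,6,6,6,5\}$, $(\alpha,\beta)=(3,1)$ against both your intermediate formula $|\lambda|=\tfrac1\beta\bigl(\alpha\binom{\beta N}{2}-\sum_i iN_i\bigr)$ and the final one, and all the algebra (the residue equidistribution of $\alpha n'\bmod\beta$, the identity $\sum_{j=0}^{\beta-1}\lfloor\alpha j/\beta\rfloor=\tfrac{(\alpha-1)(\beta-1)}{2}$, and the binomial bookkeeping) goes through. The underlying idea is the same as the paper's --- attribute the cells of $\lambda$ to the north steps of $Bdp(\lambda)$ --- but the decomposition is genuinely different. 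The paper works complementarily inside the triangle: it first computes the size of the maximal partition $P\subseteq\Delta_{n_\lambda}$ via $P_i=\alpha n_\lambda-\lceil\alpha i/\beta\rceil$, which produces the two leading terms of the stated formula at once, and then observes that a north step leaving the $(\alpha\beta n_\lambda-i)$-th level has exactly $\lfloor i/\beta\rfloor$ cells of $P$ to its right, so $|P|-|\lambda|=\sum_i\lfloor i/\beta\rfloor N_i$ with no further computation. You instead count directly (cells to the \emph{left} of each north step, i.e.\ $\sum_e x(e)$), which yields a cleaner closed form in terms of $\sum_i iN_i$ but then requires two extra number-theoretic facts to massage it into the form stated in the proposition. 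Your route has the advantage of making the exact dictionary between step counts and multigraph vertices explicit (something the paper leaves implicit), and your intermediate identity is arguably the more natural normal form; the paper's route is shorter precisely because it is reverse-engineered from the shape of the target expression. The only blemish is the remark that for $\beta=1$ ``the sum over $j$ is empty'': it is the single term $j=0$, which vanishes, so the conclusion is unaffected.
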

\begin{proof}
We compute $|\lambda|$ in two steps. We first compute the size of the maximal partition $P$ that fits in $\Delta_{n_\lambda}$, then we compute the size difference between $P$ and $\lambda$. We have $P_i = \alpha n_\lambda - \lceil \alpha i/\beta \rceil$, so $|P| = \alpha \beta \binom{n_{\lambda}}{2} + n_\lambda \sum_{j = 0}^{\beta - 1} \lfloor \alpha j / \beta \rfloor$. It remains to show $|P| - |\lambda| = \sum_{i = 0}^l \lfloor i/\beta \rfloor N_i$. For each north step leaving $(\alpha \beta n - i)$-th level, the number of cells to its right in $P$ is $\lfloor i/\beta \rfloor$. Summing up these contributions to $|P| - |\lambda|$ over all north steps gives the identity.
\end{proof}

\begin{proposition}\label{ND}
For any partition $\lambda$ with departure words $\{w^{i}(\lambda)\in W(N^{N_i}E^{E_i})\}$, we have
\begin{align*}
d^{(\alpha, \beta)}_k(\lambda) &= \begin{cases} f_{\alpha, \beta}(k) - \displaystyle\sum_{v < k , \beta | k - v} N_{\alpha \beta n_\lambda - v} & \text{if } 0< k \leq \alpha \beta n_\lambda \\
0 & \text{else} \end{cases} \\
N_v &= \begin{cases} d^{(\alpha, \beta)}_{\alpha\beta n_\lambda - v}(\lambda) - d^{(\alpha, \beta)}_{\alpha\beta n_\lambda - v + \beta}(\lambda) + \chi (\alpha | v) & \text{if } 0\leq v < \alpha\beta n_\lambda \\
0 & \text{else} \end{cases}
\end{align*}
where $f_{\alpha, \beta}(k)$ is the number of positive integer solutions to the equation $\alpha i + \beta j = k$. If $\alpha = m-1, \beta = 1$, these simplify to
\begin{align*}
d^{m}_k(\lambda) &= \begin{cases} \lfloor k/(m-1) \rfloor - \displaystyle\sum_{v < k} N_{(m-1) n_\lambda - v} & \text{if } 0< k \leq (m-1) n_\lambda \\
0 & \text{else}\end{cases}\\
N_v &= \begin{cases} d^{m}_{(m-1) n_\lambda - v}(\lambda) - d^{m}_{(m-1)n_\lambda - v + 1}(\lambda) + \chi (\alpha | v) & \text{if } 0\leq v < (m-1) n_\lambda \\
0 & \text{else} \end{cases}
\end{align*}
Thus, the north pattern determines the diagonal pattern and vice versa.
\end{proposition}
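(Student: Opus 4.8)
The plan is to translate both assertions into statements about the border path $Bdp(\lambda)$ and the maximal partition $P$ in $\Delta_{n_\lambda}$, and then carry out an elementary cell count. I would first record two facts. Since a cell lies in $\Delta_{n_\lambda}$ precisely when its $(\alpha,\beta)$-label is at most $\alpha\beta n_\lambda$, the partition $P$ is exactly the set of cells $(i,j)$ with $\alpha i + \beta j \leq \alpha\beta n_\lambda$; consequently $\lambda \subseteq P$, every cell of $\lambda$ has label at most $\alpha\beta n_\lambda$ so that $d^{(\alpha,\beta)}_k(\lambda) = 0$ for $k \leq 0$ and for $k > \alpha\beta n_\lambda$, while for $0 < k \leq \alpha\beta n_\lambda$ every positive solution of $\alpha i + \beta j = k$ is automatically a cell of $P$, whence $d^{(\alpha,\beta)}_k(P) = f_{\alpha,\beta}(k)$. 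Second, from the description of $Bdp(\lambda)$---it runs up the left edge of $\Delta_{n_\lambda}$ below $\lambda$, then along the lower boundary of $\lambda$, then along the top edge of $\Delta_{n_\lambda}$---its $\beta n_\lambda$ north steps split into the $\beta n_\lambda - l(\lambda)$ steps up the left edge, whose starting points are at the levels $\alpha t$ for $t = l(\lambda)+1, \dots, \beta n_\lambda$, together with, for each nonzero row $i$ of $\lambda$, one step up the right end of row $i$, whose starting point is at level $\alpha i + \beta\lambda_i$ (the label of the last cell of that row). Since a north step starting at level $v$ is a north edge out of vertex $\alpha\beta n_\lambda - v$, the number $N_{\alpha\beta n_\lambda - v}$ counts precisely these starting points at level $v$.

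For the first formula, $\lambda \subseteq P$ gives, for $0 < k \leq \alpha\beta n_\lambda$, that $d^{(\alpha,\beta)}_k(\lambda)$ equals $f_{\alpha,\beta}(k)$ minus the number of cells of $P \setminus \lambda$ on the $k$-th diagonal. I would index such a cell by its row $i$ and write $j(i) := (k - \alpha i)/\beta$ for its column. It is a genuine cell lying outside $\lambda$ exactly when $j(i) \in \mathbb{Z}$, and moreover $j(i) \geq 1$ if $i > l(\lambda)$ (where $i \leq \beta n_\lambda$ automatically, since $k \leq \alpha\beta n_\lambda$), or $j(i) > \lambda_i$ if $i \leq l(\lambda)$. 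These are precisely the conditions under which the left-edge step with $t = i$, respectively the right-end step of row $i$, has starting point at a level that is $< k$ and congruent to $k$ modulo $\beta$. Summing over levels, the number of such cells is $\sum_{v < k,\ \beta \mid k - v} N_{\alpha\beta n_\lambda - v}$, which is the stated formula; the cases $k \leq 0$ and $k > \alpha\beta n_\lambda$ were handled above.

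For the second formula I would compute $d^{(\alpha,\beta)}_k(\lambda) - d^{(\alpha,\beta)}_{k+\beta}(\lambda)$ directly by matching the $k$-th and $(k+\beta)$-th diagonals of $\lambda$ via $(i,j) \mapsto (i, j-1)$. Writing $j(i) := (k - \alpha i)/\beta$, the contributions of row $i$ cancel unless $j(i) \in \mathbb{Z}$ with $j(i) = \lambda_i$, a net $+1$ equivalent to $\alpha i + \beta\lambda_i = k$ with $i \leq l(\lambda)$, or $j(i) = 0$, a net $-1$ equivalent to $\alpha \mid k$ with $1 \leq k/\alpha \leq l(\lambda)$. Hence $d^{(\alpha,\beta)}_k(\lambda) - d^{(\alpha,\beta)}_{k+\beta}(\lambda) + \chi(\alpha \mid k)$ equals $\#\{i \leq l(\lambda) : \alpha i + \beta\lambda_i = k\} + \chi(\alpha \mid k \text{ and } k/\alpha > l(\lambda))$, and for $0 < k \leq \alpha\beta n_\lambda$ the inequality $k/\alpha \leq \beta n_\lambda$ is automatic, so by the second fact above this is the number of north-step starting points at level $k$, i.e.\ $N_{\alpha\beta n_\lambda - k}$. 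Setting $v = \alpha\beta n_\lambda - k$ and using $\chi(\alpha \mid k) = \chi(\alpha \mid v)$ gives the formula for $0 \leq v < \alpha\beta n_\lambda$; for $v \geq \alpha\beta n_\lambda$ one has $N_v = 0$, since every step of $Bdp(\lambda)$ starts at a positive level. The $\alpha = m-1$, $\beta = 1$ versions follow by substitution. The concluding assertion is then immediate: $n_\lambda = \tfrac1\beta \sum_v N_v$ is recovered from the north pattern and $n_\lambda = \lceil \max\{k : d^{(\alpha,\beta)}_k(\lambda) > 0\}/(\alpha\beta) \rceil$ from the diagonal pattern, after which each of the two displayed formulas recovers one pattern from the other.

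The conceptual content is slight---essentially the decomposition of $Bdp(\lambda)$ into a left-edge run, a run tracing $\lambda$, and a top-edge run, and reading off the levels of its north steps. The hard part will be the boundary bookkeeping: pinning down the exact ranges of $k$ and $v$, reconciling the ``else'' clauses with the convention that $N$ vanishes off the vertex set and $d^{(\alpha,\beta)}_k(\lambda)$ vanishes outside $\{1, \dots, \alpha\beta n_\lambda\}$, and checking the degenerate cases $l(\lambda) = \beta n_\lambda$ (no left-edge north steps) and $k$ within $\beta$ of $\alpha\beta n_\lambda$ (where $d^{(\alpha,\beta)}_{k+\beta}(\lambda) = 0$).
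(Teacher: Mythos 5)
Your proof is correct and takes essentially the same route as the paper: the diagonal formula by comparing $\lambda$ with the maximal partition $P\subseteq\Delta_{n_\lambda}$ and counting the cells of $P\setminus\lambda$ on each diagonal via the starting levels of the north steps (the paper's ``argument analogous to the proof of Proposition \ref{area}''), and the north-pattern formula by differencing consecutive diagonals. One small remark: actually carrying out your final substitution $\alpha=m-1$, $\beta=1$ gives $f_{m-1,1}(k)=\lfloor (k-1)/(m-1)\rfloor$, so the $\lfloor k/(m-1)\rfloor$ in the stated simplified formula is off by one exactly when $(m-1)\mid k$ --- a typo in the statement rather than a gap in your argument.
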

\begin{proof} The formula for diagonal pattern follows from an argument analogous to the proof of $\ref{area}$. The formula for north pattern is obtained from the former by computing differences.
\end{proof}

\begin{proposition}\label{hookpro}
For a partition $\lambda$ with departure words $\{w^{i}(\lambda)\}$, we have
\begin{align*}
h_{\alpha,\beta}(\lambda)=\sum_{i\geq0} \operatorname{inv}(w^{i}),
\end{align*}
where $\operatorname{inv}(w)$ is the total number of inverse pairs $EN$ in the word $w$.
\end{proposition}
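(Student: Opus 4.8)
The plan is to realize both sides as counts of the same set of pairs of steps of the border path $Bdp(\lambda)$. The starting point is a bijection — essentially the one behind the constructions in \cite{LW} — between the cells of $\lambda$ and the pairs (east step, north step) of $Bdp(\lambda)$ in which the east step precedes the north step in the Eulerian tour $T(\lambda)$. To a cell $v=(i,j)$ one attaches the east step $e_v$ of $Bdp(\lambda)$ tracing the bottom edge of column $j$ (the step from $(j-1,-\lambda_j^T)$ to $(j,-\lambda_j^T)$) and the north step $n_v$ tracing the right edge of row $i$ (the step from $(\lambda_i,-i)$ to $(\lambda_i,1-i)$). Since $v\in\lambda$ gives $i\le \lambda_j^T$ and $j\le\lambda_i$, and since $T(\lambda)$ is non-decreasing in both coordinates, $e_v$ precedes $n_v$ (the one slightly delicate point is when $\lambda_j^T=i$, where $e_v$ and $n_v$ lie at the same height and one checks that the steps between them are exactly the bottom edges of columns $j+1,\dots,\lambda_i$). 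Conversely, an east step that precedes some north step cannot lie along $y=0$, so it traces the bottom of a genuine column $c$; a north step that follows some east step cannot lie along $x=0$, so it traces the right edge of a genuine row $r$; and the monotonicity of $T(\lambda)$ in each coordinate forces $c\le\lambda_r$ and $r\le\lambda_c^T$, i.e. $(r,c)\in\lambda$. Thus $v\mapsto(e_v,n_v)$ is the desired bijection.

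Next I would unwind the right-hand side. By the construction of the departure words, the letter in position $p$ of $w^i$ records whether the $p$-th time $T(\lambda)$ leaves the vertex $i$ it does so along an east or a north edge; hence $\operatorname{inv}(w^i)$ is exactly the number of pairs consisting of an east step and a north step of $Bdp(\lambda)$, both leaving the vertex $i$, with the east step earlier in $T(\lambda)$. Summing over $i$, $\sum_{i\ge0}\operatorname{inv}(w^i)$ is the number of pairs (east step $e$, north step $n$) of $Bdp(\lambda)$ with $e$ before $n$ such that $e$ and $n$ leave a common vertex of $M(\lambda)$.

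The heart of the argument is then a short computation of departure vertices. For $v=(i,j)$, the step $e_v$ leaves the point $(j-1,-\lambda_j^T)$, which lies on the $\big(\beta(j-1)+\alpha\lambda_j^T\big)$-th level, so it leaves the vertex $\alpha\beta n_\lambda-\beta(j-1)-\alpha\lambda_j^T$ of $M(\lambda)$; likewise $n_v$ leaves the vertex $\alpha\beta n_\lambda-\beta\lambda_i-\alpha i$. These two vertices coincide if and only if $\beta(\lambda_i-j+1)=\alpha(\lambda_j^T-i)$, that is $\beta(a_v+1)=\alpha l_v$. Because $\alpha$ and $\beta$ are coprime, this equation forces $l_v=\beta t$ and $a_v+1=\alpha t$ for some positive integer $t$, whence $a_v+l_v+1=(\alpha+\beta)t$ is automatically divisible by $\alpha+\beta$; so $e_v$ and $n_v$ leave a common vertex precisely when $v\in H_{\alpha,\beta}(\lambda)$. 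Feeding this through the bijection of the first paragraph, the pairs counted by $\sum_{i\ge0}\operatorname{inv}(w^i)$ are exactly the pairs $(e_v,n_v)$ with $v\in H_{\alpha,\beta}(\lambda)$, and we conclude $\sum_{i\ge0}\operatorname{inv}(w^i)=|H_{\alpha,\beta}(\lambda)|=h_{\alpha,\beta}(\lambda)$.

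The main obstacle is the first paragraph: one must pin down exactly which steps of $Bdp(\lambda)$ play the roles of $e_v$ and $n_v$, verify the ordering $e_v$ before $n_v$ in every case, and check surjectivity — all routine but easy to get wrong; the level computation identifying the relevant pairs with $H_{\alpha,\beta}(\lambda)$ is then immediate. If desired, the first paragraph can instead be replaced by a citation to the analogous statement in \cite{LW}, together with the remark that $e_v$ precedes $n_v$ and that the hook length of $v$ equals one more than the number of steps of $Bdp(\lambda)$ strictly between $e_v$ and $n_v$.
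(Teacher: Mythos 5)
Your proposal is correct and follows essentially the same route as the paper's proof: the paper also attaches to each cell $v\in H_{\alpha,\beta}(\lambda)$ the east step at the bottom-left corner of its foot and the north step at the bottom-right corner of its hand, and checks that these lie on a common level (hence form an inversion in a single departure word) precisely when $\alpha l_v=\beta(a_v+1)$. Your version is somewhat more careful — it makes the surjectivity argument explicit via the global bijection between cells and ordered $(E,N)$ pairs, and it notes that coprimality of $\alpha,\beta$ makes the divisibility condition $(\alpha+\beta)\mid a_v+l_v+1$ automatic, a point the paper's proof passes over silently.
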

\begin{proof}
Given a cell $v \in \lambda$, we call the rightmost cell in its arm the \textbf{hand} of $v$, and the bottommost cell in its leg the \textbf{foot} of $v$. For $v\in H_{\alpha,\beta}(\lambda)$, we have $\alpha l_v=\beta (a_v+1)$. If $v$ has index $(i, j)$, then its hand has index $(i, j+a_v)$ and lies on the $(\alpha i + \beta (j + a_v))$-th diagonal, and its foot has index $(i+l_v, j)$ and lies on the $(\alpha (i+l_v) + \beta j)$-th diagonal. Therefore the bottom-right vertex of $v$'s hand lies on $(\alpha \beta n_\lambda - \alpha i- \beta (j + a_v))$-th level and the bottom-left vertex of $v$'s foot lies on $(\alpha \beta n_\lambda - \alpha (i+l_v) - \beta (j+1))$-th level. Since $\alpha l_v=\beta (a_v+1)$, the two vertices lie on the same level, and the steps leaving them correspond to a pair $E$ and $N$ in the departure word corresponding to that level.

Therefore each cell $v\in H_{\alpha,\beta}(\lambda)$ determines a unique inverse pair in some departure word $w^i$. It is easy to see that the above process is reversible, so it defines a bijection between $H_{\alpha, \beta}(\lambda)$ and inverse pairs $EN$ in the departure words $\{w^{i}(\lambda)\}$.
\end{proof}

\begin{example}
In the following graph, the red edges are the border of the partition $\lambda$, the horizontal one corresponds a departure letter $E$, and the vertical one corresponds to a departure letter $N$. And the cell $v$ is in $H_{3,1}$.

\setlength{\unitlength}{0.5cm}
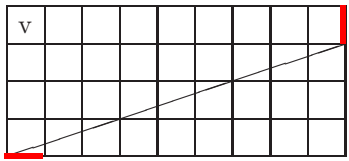
\begin{figure}[H]
\begin{center}
\begin{picture}(10,4)
\put(0.3,3.3){v}
\multiput(0,0)(1,0){9}{\line(1,0){1}}
\multiput(0,0)(1,0){10}{\line(0,1){1}}
\multiput(0,1)(1,0){9}{\line(1,0){1}}
\multiput(0,1)(1,0){10}{\line(0,1){1}}
\multiput(0,2)(1,0){9}{\line(1,0){1}}
\multiput(0,2)(1,0){10}{\line(0,1){1}}
\multiput(0,3)(1,0){9}{\line(1,0){1}}
\multiput(0,3)(1,0){10}{\line(0,1){1}}
\multiput(0,4)(1,0){9}{\line(1,0){1}}

\multiput(0,0)(3,1){3}{\line(3,1){3}}
\linethickness{2pt}
{\color{red}
\put(-0.3,0){\line(1,0){1}}
\put(8.7,3){\line(0,1){1}}
}
\end{picture}
\end{center}
\caption{A cell $v\in H_{3,1}(\lambda)$.}
\end{figure}
\end{example}

For the rest of this section, we will fix $(\alpha,\beta)=(m-1,1)$, where $m\geq 2$. In this case, it is easy to characterize the set of valid north (east) patterns, diagonal patterns, and sequences of departure words, as we now do.

\begin{proposition}\label{north}
A sequence $\{N_i\}$ is the north pattern of a partition if and only if
\begin{itemize}
\item $N_i = 0$ for $i < 0$ and for $i > l$, for some $l$,
\item $\sum_{j=1}^{m-1}N_{i-j} > 0$ for $1\leq i\leq l$,
\item $\max\{N_0-1,N_1,N_2,\cdots,N_{m-2}\}\geq 1$.
\end{itemize}
The corresponding east pattern $\{E_i\}$ is unique, given by
\begin{align*}
E_i(\lambda) = \sum_{j=1}^{m-1} N_{i-j}(\lambda).
\end{align*}
\end{proposition}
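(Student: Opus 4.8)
The statement characterizes which nonnegative integer sequences $\{N_i\}$ arise as the north pattern $\{N_i(M(\lambda))\}$ of some partition $\lambda$, together with the formula $E_i = \sum_{j=1}^{m-1} N_{i-j}$ for the east pattern. Recall that north patterns of partitions are exactly $\{N_i(M)\}$ for valid multigraphs $M$, so the plan is to translate the five defining conditions of a valid multigraph (for $(\alpha,\beta) = (m-1,1)$) into conditions on $\{N_i\}$ alone, using the fact that in this case north edges go $i \to i+(m-1)$ and east edges go $i \to i-1$.

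First I would derive the east pattern formula. The balanced condition says that at each vertex $i$ the in-degree equals the out-degree $N_i + E_i$. The incoming edges at $i$ are: north edges from $i-(m-1)$ (there are $N_{i-(m-1)}$ of them) and east edges from $i+1$ (there are $E_{i+1}$ of them). So $N_i + E_i = N_{i-(m-1)} + E_{i+1}$, i.e.\ $E_i - E_{i+1} = N_{i-(m-1)} - N_i$. Summing this telescoping relation from $i$ upward (using that $N_i = E_i = 0$ for large $i$) gives $E_i = \sum_{k \geq i}(N_{k-(m-1)} - N_k) = \sum_{j=1}^{m-1} N_{i-j}$, which is the claimed formula; this simultaneously shows the east pattern is uniquely determined by the north pattern. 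Now I would check the conditions. The condition $E_i = \sum_{j=1}^{m-1} N_{i-j}$ together with "$V(\lambda)$ is a finite subset of nonnegative integers containing $0$" forces $N_i = E_i = 0$ for $i < 0$; and finiteness of the edge set forces $N_i = 0$ for $i$ large, giving the first bullet with some cutoff $l$ (taken as the largest index with $N_l \neq 0$, or any upper bound). The second bullet, $\sum_{j=1}^{m-1} N_{i-j} > 0$ for $1 \leq i \leq l$, is exactly the statement that $E_i > 0$, which I would show is equivalent to connectedness: if some $E_i = 0$ with $1 \le i \le l$, then no edge crosses "downward" past $i$ while edges exist above $i$ (since $N_l \neq 0$ forces the walk to reach level $\ge l+1 > i$), disconnecting the graph — and conversely, positivity of all the relevant $E_i$ lets one build a path between any two vertices. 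The third bullet is a direct restatement of the fifth validity condition: "there are at least $1 + \chi(m-1 \mid i)$ north edges leaving $i$ for some $0 \le i < m-1$" becomes $N_i \geq 1 + \chi(m-1 \mid i)$ for some $0 \le i \le m-2$, and since $\chi(m-1\mid i) = 1$ only for $i = 0$ in this range, this is $\max\{N_0 - 1, N_1, \ldots, N_{m-2}\} \geq 1$.

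The remaining direction is to check that any $\{N_i\}$ satisfying the three bullets, with $\{E_i\}$ defined by the formula, actually comes from a partition — i.e.\ that the associated edge multiset forms a valid multigraph. Balancedness holds by construction (reverse the telescoping argument). The edge-count condition ($\alpha n$ east and $\beta n$ north edges for a common $n$) I would verify by setting $n := \sum_i N_i$ and checking $\sum_i E_i = \sum_i \sum_{j=1}^{m-1} N_{i-j} = (m-1)\sum_i N_i = (m-1)n$, matching $\alpha n$ with $\alpha = m-1$, $\beta = 1$. The vertex set is finite, contained in $\mathbb{Z}_{\ge 0}$, and contains $0$ (if $N_0 = E_0 = 0$ then connectedness/the third bullet would fail, so $0$ is a vertex). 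Connectedness follows from $E_i > 0$ for $1 \le i \le l$ as above, and the order-$n$ minimality (the "not in $\Delta_{n-1}$" part) follows from the third bullet. Then Proposition~\ref{ND} (or the correspondence of Section~\ref{multisum_section}) recovers $\lambda$ from the pattern.

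\textbf{Main obstacle.} The routine parts are the telescoping identity for $E_i$ and the edge-count bookkeeping; the delicate part is pinning down exactly how the \emph{connectedness} of $M(\lambda)$ corresponds to the strict positivity $\sum_{j=1}^{m-1} N_{i-j} > 0$ on the whole range $1 \le i \le l$ — in particular handling the subtlety that a north edge skips $m-1$ levels at once, so "no east edge leaves $i$" does not by itself obviously disconnect $i$ from $i+1$; one must use that north edges still cross the gap between consecutive vertices and argue via reachability of every level up to $l+m-1$. I would phrase connectedness concretely in terms of the staircase walk / border path staying connected as a curve, which makes this transparent, rather than arguing purely graph-theoretically.
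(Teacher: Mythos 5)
Your proposal is correct and follows essentially the same route as the paper: both translate the five validity conditions on the multigraph $M(\lambda)$ into conditions on $\{N_i\}$ and $\{E_i\}$, obtain the formula $E_i = \sum_{j=1}^{m-1} N_{i-j}$ by telescoping the balance relation $N_{i-(m-1)} + E_{i+1} = N_i + E_i$, identify connectedness with positivity of the intermediate $E_i$'s, and read off the last bullet from the ``not contained in $\Delta_{n-1}$'' condition. The paper's own proof is just a terse statement of this translation, so your added detail on the connectedness equivalence and the converse direction only fills in what the paper leaves implicit.
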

\begin{proof}
The conditions for a multigraph $M$ to be valid translates to the following conditions on $\{N_i(M)\}, \{E_i(M)\}$:
\begin{itemize}
\item $N_i(M) = E_i(M) = 0$ for $i < 0$ and $i > l$ for some $l$,
\item $N_{i-(m-1)}(M) + E_{i+1}(M) = N_i(M) + E_i(M)$ for all $i$,
\item $E_1(M), E_2(M), \cdots, E_{l-1}(M) \geq 1$,
\item $\max(N_0(M)-1, N_1(M), N_2(M), \cdots, N_{m-2}(M)) \geq 1$.
\end{itemize}
The first two conditions give the equation for $\{E_i\}$ in terms of $\{N_i\}$, and the rest correspond to the conditions on $\{N_i\}$.
\end{proof}

\begin{proposition}\label{diag}
A sequence of nonnegative integers $\{d_i\}_{i > 0}$ is the diagonal pattern of some partition if and only if for some $\{s_i\}_{i > 0} \in \mathcal{S}_m$, $\{d_i\}$ begins with $m-1$ terms equal to 0, $m-1$ terms equal to 1, ..., $m-1$ terms equal to $s_1-1$, followed by $\{s_i\}$. Thus, diagonal patterns are naturally represented by sequences in $\mathcal{S}_m$.
\end{proposition}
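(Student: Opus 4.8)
The strategy is to transport the characterization of valid north patterns (Proposition \ref{north}) across the bijection of Proposition \ref{ND} between north patterns and diagonal patterns, specialized to $(\alpha,\beta)=(m-1,1)$. Write $n=n_\lambda$ and $L=(m-1)n$. Since $m-1\mid L$, Proposition \ref{ND} reads
\[
N_{L-k}=d_k-d_{k+1}+\chi(m-1\mid k)\qquad(1\le k\le L),
\]
with $d_k=0$ for $k>L$ and $N_i=0$ for $i<0$. Telescoping this identity from index $k$ down to $L$, and using that there are exactly $n$ north edges in all, gives $d_k=f_{m-1,1}(k)-\sum_{i>L-k}N_i$ with $f_{m-1,1}$ as in Proposition \ref{ND}; as the sequence $\{f_{m-1,1}(k)\}_{k\ge1}$ is $0^{m-1},1^{m-1},2^{m-1},\dots$, this shows $d_k\le f_{m-1,1}(k)$ always, with equality exactly when $N_i=0$ for all $i>L-k$. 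Thus the diagonal pattern coincides with this ``maximal staircase'' on the initial segment $1\le k\le L-l$, where $l$ is the largest vertex with $N_l>0$, and drops strictly below it at $k=L-l+1$.

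With this in hand I would set $s_1$ to be the largest $r$ such that $d_k=f_{m-1,1}(k)$ for all $k\le r(m-1)$ and $d_{r(m-1)+1}=r$ (concretely $s_1=\lfloor(L-l-1)/(m-1)\rfloor$), and put $s_i:=d_{s_1(m-1)+i}$ for $i\ge1$; then $\{d_k\}$ is exactly $0^{m-1},1^{m-1},\dots,(s_1-1)^{m-1}$ followed by $\{s_i\}$, so what must be shown is that the conditions of Proposition \ref{north} on $\{N_i\}$ are equivalent to $\{s_i\}\in\mathcal{S}_m$. One further telescoping step does this: summing the displayed identity over $m-1$ consecutive indices and using $E_i=\sum_{j=1}^{m-1}N_{i-j}$ (Proposition \ref{north}) gives, since exactly one of $k+1,\dots,k+m-1$ is divisible by $m-1$,
\[
E_{L-k}=d_{k+1}-d_{k+m}+1.
\]
Now $N_i\ge0$ becomes $d_k-d_{k+1}+\chi(m-1\mid k)\ge0$, which after the shift $k=s_1(m-1)+i$ (so $\chi(m-1\mid k)=\chi(m-1\mid i)$) is exactly $s_i\ge s_{i+1}-\chi(m-1\mid i)$; the condition $E_i>0$ for $1\le i\le l$ becomes $d_{k+1}\ge d_{k+m}$, which after the same shift is $s_i\ge s_{i+m-1}$; and the support condition $N_i=0$ for $i>l$ is exactly the requirement that the diagonal pattern carry the claimed staircase prefix, equivalently that only finitely many $s_i$ be nonzero. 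These are precisely the conditions cutting out $\mathcal{S}_m$.

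This leaves only the condition $\max(N_0-1,N_1,\dots,N_{m-2})\ge1$ of Proposition \ref{north}, which has no counterpart among the defining inequalities of $\mathcal{S}_m$; I would dispatch it by a direct computation. Here $N_0=d_L+1$ and $N_v=d_{L-v}-d_{L-v+1}$ for $1\le v\le m-2$, while $n$ is forced to equal $\lceil k_\lambda/(m-1)\rceil$ with $k_\lambda=s_1(m-1)+t$, $t:=\max\{i\ge1:s_i>0\}$; setting $q=(m-1)\lceil t/(m-1)\rceil$, one checks that either $m-1\mid t$, in which case $d_L=s_t>0$ so $N_0-1\ge1$, or $q>t$, in which case $N_{q-t}=s_t>0$ with $q-t\in\{1,\dots,m-2\}$, so the condition holds automatically and imposes nothing new. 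Running the whole argument backwards — build $\{d_k\}$ from $\{s_i\}\in\mathcal{S}_m$, then $\{N_i\}$, then invoke Propositions \ref{north} and \ref{ND} — would give the converse, and the uniqueness of $s_1$ (a smaller choice forces $s_1<s_m$, violating $s_i\ge s_{i+m-1}$, while a larger one destroys the staircase prefix) makes the correspondence a genuine bijection, justifying the phrase ``naturally represented''. The main obstacle I anticipate is exactly the bookkeeping around the shift $s_1$ — checking that the prefix really has the exact form $0^{m-1},\dots,(s_1-1)^{m-1}$ with the jump to $s_1$ occurring at the predicted position, and that this is compatible with the order condition — after which everything else is a mechanical translation of inequalities through the two formulas of Proposition \ref{ND}.
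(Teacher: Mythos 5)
Your proof is correct and is exactly the argument the paper intends: the paper's own proof of Proposition \ref{diag} is the one-line remark that it ``follows from Proposition \ref{north} after applying the formula in Proposition \ref{ND},'' and your write-up simply carries out that translation in detail (including the check that the normalization $n_\lambda=\lceil k_\lambda/(m-1)\rceil$ makes the condition $\max(N_0-1,N_1,\dots,N_{m-2})\ge 1$ automatic, which the paper leaves implicit).
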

\begin{proof} This follows from Proposition \ref{north} after applying the formula in Proposition \ref{ND}.
\end{proof}

\begin{proposition}\label{dep}
A sequence of words $w^{i}\in W(N^{N_i}E^{E_i})$ is a sequence of departure words if and only if $\{N_i\}$ is a north pattern, $\{E_i\}$ its corresponding east pattern, and for $i\geq 1$, every nonempty $w^{i}$ ends with the letter $E$.
\end{proposition}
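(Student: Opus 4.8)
The plan is to characterize sequences of departure words directly from the combinatorial data that determines an Eulerian tour, reducing to a local condition at each vertex. Recall that a partition $\lambda$ of order $n$ corresponds to an Eulerian tour on a valid multigraph $M$ beginning and ending at $0$, and that this tour is encoded by the sequence of departure words $\{w^i\}$: the $i$-th word records, in the order of visits to vertex $i$, whether the next edge taken is north ($N$) or east ($E$). For the forward direction, I would argue that $\{N_i\}$ must be a north pattern and $\{E_i\}$ its corresponding east pattern simply because $w^i \in W(N^{N_i}E^{E_i})$ and these are the edge multiplicities of the underlying valid multigraph, which by Proposition \ref{north} forces these constraints. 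The new content is the claim that every nonempty $w^i$ with $i \geq 1$ must end in $E$: the last time the tour departs from vertex $i$, it cannot take a north edge, because after a north step from $i \geq 1$ the tour is at $i + (m-1) \geq m \geq 2$, and since the tour must eventually return to $0$ and then terminate, it must pass through $i$ again (every level strictly between $0$ and the current one, in particular $i$, lies on a path back—this uses connectivity and the balanced condition, or more concretely that the border path, being a staircase walk from the hypotenuse region back toward the origin along the shape of $\lambda$, visits every intermediate level). More carefully: the border path ends at the bottom-right corner $(\alpha n, -\beta n)$ region; after the final departure from any vertex $i \geq 1$ there are no further $N$-steps from $i$, and if that final departure were an $N$-step we would then be at level $\alpha\beta n - i + \text{(something)}$ forcing a later return to $i$, a contradiction. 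The cleanest phrasing is: the border path's last visit to level $\alpha\beta n_\lambda - i$ (for $i \geq 1$) cannot be immediately followed by a north step, since a north step leads to a strictly lower-indexed vertex—wait, higher $i$, lower level—from which the only way to eventually reach the endpoint at level $0$ (vertex $0$) requires re-traversing level $\alpha\beta n_\lambda - i$.

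For the converse, given $\{N_i\}$ a north pattern with corresponding east pattern $\{E_i\}$ and words $w^i \in W(N^{N_i}E^{E_i})$ such that every nonempty $w^i$ ($i \geq 1$) ends in $E$, I would reconstruct the staircase walk greedily: start at vertex $0$, and at each step, if currently at vertex $j$ having used the first $k$ letters of $w^j$, take the edge dictated by the $(k+1)$-st letter of $w^j$. I must check this produces a valid Eulerian tour, i.e. that the process consumes every edge exactly once and returns to $0$. Balancedness of $M$ (guaranteed by the north/east pattern compatibility in Proposition \ref{north}) ensures that the greedy walk can only get stuck at $0$, and an Eulerian tour exists by connectivity; the subtlety is that an \emph{arbitrary} consistent assignment of departure words yields a valid tour rather than a disconnected union of closed walks. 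Here is where the condition "$w^i$ ends in $E$ for $i \geq 1$" does real work together with connectivity: I would show that if the greedy walk terminated prematurely (back at $0$ with edges remaining), the unused edges would form a nonempty balanced subgraph, and by connectivity of $M$ some vertex $i$ of this subgraph is also on the already-traversed walk; but then the walk departed from $i$ after exhausting... — the standard argument is that in fact \emph{every} assignment of departure words consistent with a balanced connected graph gives an Eulerian tour, a classical fact (the "BEST theorem" setup / van Aardenne-Ehrenfest–de Bruijn), so the walk never gets stuck away from $0$ and traverses everything; then the ending-in-$E$ condition is exactly what is needed for the resulting staircase walk to be the border path of a genuine partition, i.e. to not "end with a vertical step" in a way incompatible with tracing a Young diagram boundary. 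I expect the \textbf{main obstacle} to be making this last point rigorous: precisely pinning down why "nonempty $w^i$ ends in $E$ for $i \geq 1$" is both necessary and sufficient for the Eulerian tour to correspond to a legitimate partition's border path, as opposed to some staircase walk that dips out of $\Delta_n$ or fails to be realizable as the boundary of a Young diagram. I would handle this by translating the ending-in-$E$ condition back through the border-path correspondence: a visit to level $\ell$ followed by a north step then immediately (at the next visit, which by the tour structure is the "return" visit along the boundary) — the condition says the shape's boundary, read as a lattice path, after going up must eventually go right before the path closes up at the far corner, which is automatic for any staircase boundary of a Young diagram except possibly at vertex $0$ itself (where the path legitimately may start with north steps, accounting for the "$N_0 - 1$" in Proposition \ref{north}). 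Once that dictionary is set up, the equivalence is a direct check.
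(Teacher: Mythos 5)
Your forward direction is sound: the last departure from a vertex $i \geq 1$ cannot be a north step, because a walk from $i+(m-1)$ back to $0$ using steps of $+(m-1)$ and $-1$ must land on $i$ again (the first step taking it to a value $\leq i$ is necessarily a $-1$ step from $i+1$), contradicting that this was the final visit. The converse, however, contains a genuine gap: you assert that ``every assignment of departure words consistent with a balanced connected graph gives an Eulerian tour, a classical fact.'' This is false, and it is precisely the point at issue. The van Aardenne-Ehrenfest--de Bruijn/BEST correspondence says that the greedy walk determined by a word assignment is a single Eulerian tour if and only if the \emph{last-exit} edges at the non-root vertices form an oriented spanning tree into the root; for an arbitrary assignment the walk can return to $0$ with edges left untraversed. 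This spanning-tree criterion is exactly what the ending-in-$E$ condition encodes, and it is how the paper proves the proposition (citing Theorem 14 of \cite{LW}): when $\beta = 1$ the only arborescence into $0$ consists of the east edges $i \to i-1$, since an arborescence edge $j \to j+(m-1)$ would force the tree path from $j+(m-1)$ down to $0$ to pass through $j$ again, creating a cycle. Thus ``every nonempty $w^i$ with $i\geq 1$ ends in $E$'' is the BEST condition verbatim.

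Relatedly, you misattribute the role of the condition in the converse: you identify as the main obstacle the question of whether the resulting staircase walk is the border path of a genuine partition. That part is automatic --- any Eulerian tour on a valid multigraph beginning and ending at $0$ is a monotone lattice path with the right numbers of $N$ and $E$ steps and hence bounds a Young diagram, staying in $\Delta_n$ because the vertex indices are nonnegative. The entire burden of the ending-in-$E$ condition is to guarantee that the greedy walk is a single Eulerian tour in the first place. Your abandoned argument (unused edges form a balanced subgraph meeting the traversed walk by connectivity) can be completed, but only by invoking the ending-in-$E$ hypothesis at that step, not by appeal to the nonexistent classical fact you substitute for it.
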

\begin{proof}
Let $M$ be the multigraph with $N_i(M) = N_i$ and $E_i(M) = E_i$. Let $\operatorname{TreeD}(M)$ be the set of all oriented spanning trees leading to the root $0$ in $M$. By Theorem 14 in \cite{LW}, $\{w^i\}$ is a sequence of departure words if and only if $M$ is valid and for $T \in \operatorname{TreeD}(M)$, $w^i$ ends with the letter corresponding to the the edge leaving $i$ in $T$. In our case, $\alpha = m-1, \beta = 1$, $\operatorname{TreeD}(M)$ consists of a single tree $T$, whose edges are all east edges.
\end{proof}

\subsection{Proofs of Theorems \ref{multisumthm} and \ref{k=0,1,2}}
Let $D$ be a diagonal equivalence class with diagonal pattern $\{d_i\}$, north pattern $\{N_i\}$, and east pattern $\{E_i\}$; the relations between these sequences are given in Propositions \ref{ND} and \ref{north}. By Proposition \ref{dep}, the number of sequences of departure words $\{w^i\}$ such that $w^i \in W(N^{N_i}E^{E_i})$ is
\begin{align*}
\prod_{j\geq 1}{E_j + N_j - 1\choose N_{j}} = \prod_{j\geq 1}{\sum_{i=0}^{m-1}N_{j-i}-1\choose N_{j}} = \prod_{j\geq 1}{s_{j} - s_{j+1} + \chi (m-1 | j) \choose s_{j+1} - s_{j+m}}.
\end{align*}
where $\{s_i\}$ is the sequence in $\mathcal{S}_m$ representing the diagonal pattern of $D$ (as in Proposition \ref{diag}). By Proposition \ref{hookpro}, if we replace the binomial coefficients above by $t$-binomial coefficients, this is the generating function of $h_{m-1,1}$ on $D$, as it is well known that $t$-binomial coefficients count words by number of inversions. Putting in powers of $q$ for the size of partition, which is just the sum of all terms in the diagonal pattern, and summing over diagonal equivalence classes, we get Theorem \ref{multisumthm}.

From the above proof, it is worth stating separately the following generating formula of $h_{m-1,1}$ on any $m$-diagonal equivalent class.
\begin{corollary}{\label{generatingfunction}}
Let $D$ be an $m$-diagonal equivalent class with diagonal pattern $\{d_i\}$, then we have the generating function of $h_{m-1,1}$ on $D$:
\begin{align*}
\sum_{\mu\in D}t^{h_{m-1,1}(\mu)}=\prod_{j\geq 1}\left[s_j - s_{j+m} + \chi (m-1 | j) \atop s_{j+1} - s_{j+m}\right]_t,
\end{align*}
where $\{s_i\}$ is the sequence in $\mathcal{S}_m$ representing the diagonal pattern of $D$.
\end{corollary}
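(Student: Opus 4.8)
The statement to prove is Corollary \ref{generatingfunction}, which claims that for an $m$-diagonal equivalence class $D$ with diagonal pattern $\{d_i\}$ represented by $\{s_i\} \in \mathcal{S}_m$, we have $\sum_{\mu \in D} t^{h_{m-1,1}(\mu)} = \prod_{j\geq 1}\left[s_j - s_{j+m} + \chi(m-1\mid j) \atop s_{j+1} - s_{j+m}\right]_t$.

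The plan is to extract this directly from the argument already given in the proof of Theorem \ref{multisumthm}, specialized to a single diagonal equivalence class rather than summed over all of them. First I would recall that, by the bijection established in the preceding subsections, partitions $\mu \in D$ are in bijection with sequences of departure words $\{w^i\}$ with $w^i \in W(N^{N_i}E^{E_i})$ satisfying the end-condition of Proposition \ref{dep}; here $\{N_i\}$ and $\{E_i\}$ are the north and east patterns determined by $D$ via Propositions \ref{ND} and \ref{north}. Since by Proposition \ref{dep} the only constraint beyond the letter counts is that each nonempty $w^i$ with $i\geq 1$ must end in $E$ (and $w^0$ is unconstrained since $\operatorname{TreeD}(M)$ consists of the single all-east tree rooted at $0$), counting such words weighted by $t^{\operatorname{inv}}$ amounts to counting words in $W(N^{N_j}E^{E_j - 1})$ for $j \geq 1$ (after stripping the forced terminal $E$, which contributes no inversions) and words in $W(N^{N_0}E^{E_0})$ for $j = 0$. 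The generating function for inversions over $W(N^a E^b)$ is the $t$-binomial $\left[a+b \atop a\right]_t$, so each factor becomes $\left[N_j + E_j - 1 \atop N_j\right]_t$ for $j\geq 1$ and $\left[N_0 + E_0 \atop N_0\right]_t$ for $j=0$; since $E_0 = \sum_{i=1}^{m-1} N_{-i} = 0$ by Proposition \ref{north}, the $j=0$ factor is $\left[N_0 \atop N_0\right]_t = 1$, and it can be absorbed into the uniform product by the same index shift used in Theorem \ref{multisumthm}.

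Then I would invoke Proposition \ref{hookpro}, which gives $h_{m-1,1}(\mu) = \sum_{i\geq 0}\operatorname{inv}(w^i(\mu))$, so that summing $t^{h_{m-1,1}(\mu)}$ over $\mu \in D$ factors as a product over $j$ of the per-word inversion generating functions, yielding $\prod_{j\geq 1}\left[N_j + E_j - 1 \atop N_j\right]_t$. Finally I would substitute the expressions for $N_j$ and $E_j$ in terms of $\{s_i\}$: using $E_j = \sum_{i=1}^{m-1} N_{j-i}$ from Proposition \ref{north} together with the translation $N_v = d_{(m-1)n_\lambda - v} - d_{(m-1)n_\lambda - v + 1} + \chi(\alpha\mid v)$ from Proposition \ref{ND} and the identification of $\{d_i\}$ with $\{s_i\}\in\mathcal{S}_m$ from Proposition \ref{diag}, one gets exactly $N_j + E_j - 1 = s_j - s_{j+m} + \chi(m-1\mid j)$ and $N_j = s_{j+1} - s_{j+m}$, which is precisely the identity used in the displayed computation inside the proof of Theorem \ref{multisumthm}. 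This matches the desired product.

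The main obstacle — really the only point requiring care — is verifying the index bookkeeping in the last substitution: one must be consistent about the orientation/reflection of indices between the north pattern indexing (distance from the hypotenuse level) and the $\mathcal{S}_m$-representation of the diagonal pattern (which begins with $m-1$ copies each of $0,1,\dots,s_1-1$ before the sequence $\{s_i\}$ proper), and confirm that the boundary terms (small $i$, where $\chi(m-1\mid i)$ appears, and the transition at $i=0$) are handled correctly. Since the identity $\prod_{j\geq 1}{E_j+N_j-1 \choose N_j} = \prod_{j\geq 1}{s_j - s_{j+1} + \chi(m-1\mid j) \choose s_{j+1} - s_{j+m}}$ is already asserted in the proof of Theorem \ref{multisumthm} (and $s_j - s_{j+1} = s_j - s_{j+m} - (s_{j+1} - s_{j+m})$ so the top entries agree after the standard $t$-binomial symmetry/rewriting), the corollary follows by simply reading off the per-class factor before summing over classes; I would present it as a one-paragraph consequence rather than reproving the bijective machinery.
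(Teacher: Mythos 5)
Your proposal is correct and is essentially the paper's own argument: the paper derives this corollary by simply reading off the per-class factor from the proof of Theorem \ref{multisumthm} (the product $\prod_j\left[E_j+N_j-1 \atop N_j\right]_t$ counting departure-word sequences by inversions via Propositions \ref{dep} and \ref{hookpro}, then translating $N_j, E_j$ into the $\{s_i\}$ via Propositions \ref{ND}, \ref{north}, and \ref{diag}), exactly as you do. Your additional bookkeeping (stripping the forced terminal $E$, the trivial $j=0$ factor, and the $t$-binomial symmetry reconciling $N_j$ versus $E_j-1$ in the bottom entry) is a sound elaboration of the same route rather than a different one.
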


For the rest of this section we fix $m = 2$. Theorem \ref{k=0,1,2} follows from Theorem \ref{multisumthm} and the proposition below.

\begin{proposition}\label{k=0,1,2}
Let $D$ be a 2-diagonal equivalence class. For $r = 0, 1, 2$, we have
\begin{align*}
|\{\mu| \mu \in D, h_{1,1}(\mu)=r\}| = |\{\mu| \mu \in D, a_2(\mu)=r\}|
\end{align*}
\end{proposition}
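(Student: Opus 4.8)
The plan is to compute both sides explicitly in terms of the diagonal pattern data of $D$ and check they agree for $r=0,1,2$. For the left side, Corollary \ref{generatingfunction} gives
\[
\sum_{\mu \in D} t^{h_{1,1}(\mu)} = \prod_{j \geq 1} \left[ s_j - s_{j+2} + 1 \atop s_{j+1} - s_{j+2}\right]_t,
\]
where $\{s_i\} = \{d_i\}$ is the diagonal pattern of $D$ (since $\mathcal{S}_2 = \mathcal{P}$). So I first extract $[t^r]$ of this product of Gaussian binomials for $r = 0,1,2$. Writing $a_j = s_j - s_{j+2} + 1$ and $b_j = s_{j+1} - s_{j+2}$, the constant term is $1$ (always exactly one partition with $h_{1,1} = 0$ in each class, the unique one whose departure words are sorted); the $t^1$ coefficient is $\sum_j b_j(a_j - b_j) = \sum_j (s_{j+1}-s_{j+2})(s_j - s_{j+1})$; and the $t^2$ coefficient is a sum of the within-factor contributions $[t^2]\left[a_j \atop b_j\right]_t$ plus the cross terms $\sum_{j < k} b_j(a_j-b_j)\, b_k(a_k-b_k)$. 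These are all elementary once one recalls $[t^0]\left[a\atop b\right]_t = 1$, $[t^1] = b(a-b)$ (for $0 < b < a$), and the standard formula for $[t^2]$.

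For the right side I need $[t^r] \sum_{\mu \in D} t^{a_2(\mu)}$, i.e. I must count partitions in $D$ with $a_2(\mu) = 0, 1, 2$. Here the key translation is between the $2$-diagonal pattern and the multiset of parts. The $2$-diagonal ($(\alpha,\beta) = (1,1)$) of $\mu$ consists of the main diagonal, and $d_k$ is the number of cells on the $k$-th anti-diagonal; equivalently, in terms of the Frobenius-type coordinates, knowing $\{d_k\}$ is knowing $\mu$ up to rook-equivalence. The statistic $a_2(\mu)$ — the largest part appearing at least twice — should be read off from consecutive equalities among parts; having $a_2(\mu) \geq r$ means there are at least $r$ values $v$ (counted with multiplicity in a suitable sense) for which $\mu$ has two equal parts of size $\geq$ something. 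I would set up a direct bijection or generating-function bookkeeping: fix the diagonal pattern, and count how the freedom in choosing $\mu$ within its class splits according to $a_2$. A clean way is to use the same border-path / departure-word model but track $a_2$ instead of $h_{1,1}$: a repeated part corresponds to a specific local pattern in the border path (two consecutive rows of equal length, i.e. an $E\cdots E$ run followed immediately by a single $N$ then $E\cdots E$ of the same length), and I would argue that for $r = 0,1,2$ the count of such configurations inside a fixed diagonal class matches the inversion count above.

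So the concrete steps, in order: (1) record the formulas for $[t^0], [t^1], [t^2]$ of a single Gaussian binomial and of a product of them, in terms of the $s_i$; (2) similarly obtain a combinatorial/generating-function expression for the number of $\mu \in D$ with $a_2(\mu) = 0, 1, 2$, most likely by a careful hand-count using the characterization of $D$ via its diagonal pattern (or via the well-known product $\prod 1/((1-q^{2i-1})(1-tq^2i))$ decomposed by diagonal class); (3) verify the three identities $[t^r](\text{left}) = [t^r](\text{right})$ for $r = 0,1,2$ by algebraic manipulation, using the relations $E_j = N_{j-1}$, $N_j = d_j - d_{j+1} + \chi(\text{even } j)$ from Propositions \ref{ND} and \ref{north} to rewrite everything uniformly. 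The main obstacle I anticipate is step (2): unlike $h_{1,1}$, the statistic $a_2$ does not factor over diagonals, so getting a workable closed form for its distribution restricted to a single diagonal class — and in particular showing the $t^2$ coefficient matches, where genuine cross-diagonal interactions enter — will require the most care. I would expect to need an explicit involution or a sieve argument pairing "two repeated parts" configurations with "two inversions across departure words" configurations, and to check by hand that no correction terms survive for $r \leq 2$ (they presumably do for $r \geq 3$, which is why the proposition is stated only up to $r = 2$).
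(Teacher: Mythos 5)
Your overall framework---work inside a fixed $2$-diagonal class via the departure-word model and match the local configurations that produce $h_{1,1}=r$ against those that produce $a_2=r$---is the same one the paper uses, but the proposal stops short of the one construction that actually carries the proof, and it contains a concrete computational error along the way. The error first: for a Gaussian binomial $\left[a \atop b\right]_t$ with $0<b<a$, the coefficient of $t^1$ is $1$ (there is exactly one word in $W(N^bE^{a-b})$ with a single inversion, equivalently exactly one partition of size $1$ in a nonempty $b\times(a-b)$ box), not $b(a-b)$. Hence $[t^1]$ of the product in Corollary \ref{generatingfunction} is the number of indices $j$ with a nonempty box, i.e.\ $\#\{j : s_{j+1}>s_{j+2}\}$, not $\sum_j (s_{j+1}-s_{j+2})(s_j-s_{j+1})$, and the analogous corrections are needed at $t^2$. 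More seriously, your step (2)---obtaining the $a_2$-distribution on a single diagonal class---is exactly where the content of the proposition lies, and you leave it as ``I would expect to need an explicit involution.'' Since $a_2$ does not factor over diagonals and the product $\prod_i 1/\bigl((1-q^{2i-1})(1-tq^{2i})\bigr)$ does not decompose by diagonal class in any usable way, neither route you sketch produces the count without a new idea; your proposed local signature of a repeated part in the border path is also not quite right (two equal consecutive parts correspond to two consecutive $N$ steps, not to a repeated $E$-run).

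The paper supplies the missing idea by defining, for each $\mu$, its \emph{initial} words: run the departure-word algorithm but stop at the first step corresponding to the largest repeated part of $\mu$ and strip trailing $N$'s; the complementary \emph{remaining} words encode a distinct-parts partition and hence carry no inversions. With this device, $a_2(\mu)=1$ becomes ``all initial words empty except one equal to $E$,'' which visibly matches ``all departure words inversion-free except one of the form $N\cdots NENE\cdots E$.'' For $r=2$ each side splits into three types (initial words: two words $E$; one word $EE$; two consecutive words $NE$ and $E$ --- versus departure words with two inversions: two words each of type $N\cdots NENE\cdots E$; one word $N\cdots NEENE\cdots E$; one word $N\cdots NENNE\cdots E$), two of the three matchings are immediate, and the third requires an explicit reversible move (replace the block $ENN$ by $NEN$ at the front of the word and shift one $E$ from the next departure word). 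You would need to supply this construction, or an equivalent one, before your argument closes; the fact that the proposition fails for $r\geq 3$ is a warning that no soft counting or sieve argument will do the job for you.
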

\begin{proof} Given a partition $\mu$, we construct its \textit{initial} words by running the same algorithm for constructing the departure words, except stopping at the first step corresponding to the largest repeated part of $\mu$, and removing any trailing $N$'s from each word. We call the sequence of their complements in the departure words the \textit{remaining} words of $\mu$. Clearly the remaining words have no inversions, as they represent a partition with distinct parts.

For $r = 0$, the two sets in the Proposition are identical.

For $r = 1$, $h_{1,1}(\mu) = 1$ if and only if all of its departure words have no inversions except one, which is of the form $N\cdots NENE\cdots E$, and $a_2(\mu)$ is $1$ if and only if its initial words are all empty except one, which is $E$. Clearly, the two sets are in bijection.

For $r = 2$, $h_{1,1}(\mu) = 2$ if and only if its departure words
\begin{enumerate}[(a)]
\item \label{itm1}have no inversions except two, which are of the form $N\cdots NENE\cdots E$.
\item \label{itm2}have no inversions except one, which is of the form $N\cdots NEENE\cdots E$.
\item \label{itm3}have no inversions except one, which is of the form $N\cdots NENNE\cdots E$.
\end{enumerate}
and $a_2(\mu)$ is $2$ if and only if its initial words are
\begin{enumerate}[(A)]
\item \label{itma}all empty except two, which are $E$.
\item \label{itmb}all empty except one, which is $EE$.
\item \label{itmc}all empty except two consecutive words, which are $NE, E$ (in that order).
\end{enumerate}
Clearly, (\ref{itm1}) and (\ref{itma}), (\ref{itm2}) and (\ref{itmb}) are in bijection. To see that (\ref{itm3}) and (\ref{itmc}) are in bijection, note that for $\mu$ of type (\ref{itm3}), we may move the inversion sequence $ENN$ to the beginning of the word, change it to $NEN$, and move one letter $E$ in the next depature word to its beginning. This gives a partition of type (\ref{itmc}). This process is reversible, and thus is a bijection.
\end{proof}
\begin{remark}Proposition \ref{k=0,1,2} does not generalize to $r \geq 3$.
\end{remark}

\section{The connection to $q$-Catalan numbers}\label{catalan_section}

\subsection{A $q$-Catalan identity}By restricting the sum in the left side of Theorem \ref{multisumthm} to partitions that fit in the upper triangular region with side length $n$, and restricting the right side accordingly, we obtain a similar identity to Corollary \ref{multisumcor} for the $q$-Catalan numbers.

We begin by noting that a partition with $2$-diagonal pattern $\{1,2,\cdots, \lambda_1-1, \lambda_1,\lambda_2,\cdots \}$ is in the $n$ by $n$ upper triangular region if and only if $\lambda_1+l(\lambda)\leq n$. Therefore Theorem \ref{multisumthm} implies that the generating function for partitions in this region is
\begin{align}
\label{resn}f_n(q)=\sum_{\lambda \in \mathcal{P},\atop \lambda_1+l(\lambda)\leq n}\prod {\lambda_i-\lambda_{i+2}+1 \choose \lambda_{i+1}-\lambda_{i+2}}q^{{\lambda_1\choose 2}+|\lambda|}
\end{align}

On the other hand, each partition in this region corresponds to a word with $n$ $E$'s and $n$ $N$'s arranged in such a way that the number of $E$'s never exceeds the number of $N$'s. We call such a word a Dyck word of length $n$, and the corresponding lattice path a Dyck path. Let $P_n$ denote the set of Dyck paths of length $n$.

\begin{example}
The following graph is a Dyck path of length $n=8$, with coarea $12$.
\begin{figure}[H]
\begin{center}
\begin{picture}(10,10)
\linethickness{0.5pt}

\multiput(0,0)(1,0){8}{\line(1,0){1}}
\multiput(0,0)(1,0){9}{\line(0,1){1}}
\multiput(0,1)(1,0){8}{\line(1,0){1}}
\multiput(0,1)(1,0){9}{\line(0,1){1}}
\multiput(0,2)(1,0){8}{\line(1,0){1}}
\multiput(0,2)(1,0){9}{\line(0,1){1}}
\multiput(0,3)(1,0){8}{\line(1,0){1}}
\multiput(0,3)(1,0){9}{\line(0,1){1}}
\multiput(0,4)(1,0){8}{\line(1,0){1}}
\multiput(0,4)(1,0){9}{\line(0,1){1}}
\multiput(0,5)(1,0){8}{\line(1,0){1}}
\multiput(0,5)(1,0){9}{\line(0,1){1}}
\multiput(0,6)(1,0){8}{\line(1,0){1}}
\multiput(0,6)(1,0){9}{\line(0,1){1}}
\multiput(0,7)(1,0){8}{\line(1,0){1}}
\multiput(0,7)(1,0){9}{\line(0,1){1}}
\multiput(0,8)(1,0){8}{\line(1,0){1}}

\linethickness{2pt}
\multiput(0,0)(1,1){8}{\line(1,1){1}}
{\color{red}
\put(0,2){\line(1,0){1}}
\put(1,3){\line(1,0){1}}
\put(2,6){\line(1,0){1}}
\put(3,6){\line(1,0){1}}
\put(4,7){\line(1,0){1}}
\put(5,8){\line(1,0){1}}
\put(6,8){\line(1,0){1}}
\put(7,8){\line(1,0){1}}

\put(0,0){\line(0,1){1}}
\put(0,1){\line(0,1){1}}
\put(1,2){\line(0,1){1}}
\put(2,3){\line(0,1){1}}
\put(2,4){\line(0,1){1}}
\put(2,5){\line(0,1){1}}
\put(4,6){\line(0,1){1}}
\put(5,7){\line(0,1){1}}
}
\end{picture}
\end{center}
\caption{A Dyck path of length $n=8$.}
\end{figure}
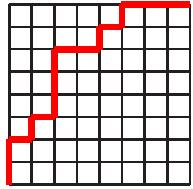

\end{example}

Given any path $p\in P_n$, define
\begin{align*}
\text{coarea}(p)=\#\{\text{cells between }p \text{ and the diagonal } y=x\}
\end{align*}
Then we may define the $n$-th (Carlitz) $q$-Catalan number by
\begin{align*}
C_n(q)=\sum_{p\in P_n}q^{\text{coarea}(p)},\quad n=1,2,3,4\cdots.
\end{align*}
By convention we let $C_0=1$.
See \cite{FH} for further discussion of the $q$-Catalan numbers.

It is now easy to see that $f_n$ and $C_n$ satisfy the following relation:
\begin{align*}
f_n(q)=q^{{n}\choose {2}}C_n\left(\frac{1}{q}\right).
\end{align*}
Plugging in (\ref{resn}), we obtain Theorem \ref{thm1.2}, which recall states that
\begin{align*}
q^{{n}\choose {2}}C_n\left(\frac{1}{q}\right)=\sum_{\lambda=(\lambda_1,\lambda_2,\cdots,\lambda_k),\atop \lambda_1+k\leq n}\prod_{i \geq 1} {\lambda_i-\lambda_{i+2}+1 \choose \lambda_{i+1}-\lambda_{i+2}}q^{{\lambda_1\choose 2}+|\lambda|}.
\end{align*}

We now provide an alternative proof of Theorem \ref{thm1.2} using the $q$-Vandermonde theorem, from it we obtain a convolution-like recurrence for the generating function of $h_{1,1}(\mu)$ in each 2-diagonal equivalence class.

Given any partition in the $n$ by $n$ upper trianglular region, where we assume the origin to be the bottom left of the region, define $s_i$ to be the number of cells on the diagonal $y=x+i$, which are not in the partition. It is easy to see that the sequence $\{s_i\}$ first decreases to $0$ then remains at $0$:
\begin{align}
\label{decreasing}n=s_0>s_1>s_2>\cdots>s_{l}>s_{l+1}=s_{l+2}=\cdots=s_{n}=0.
\end{align}
With this we can reformulate (\ref{resn}),
\begin{align*}
f_n(q)=\sum_{n=s_0,s_1,\cdots, s_n}\prod_{i= 0}^{n-1} {s_i-s_{i+2}-1 \choose s_{i+1}-s_{i+2}}q^{{s_0\choose 2}-(s_1+s_2+\cdots+s_{n-1}+s_n)},
\end{align*}
where we sum over all sequences ${s_i}$ that satisfy (\ref{decreasing}).

Applying the change of variable $k_i=s_{i}-s_{i+1}$, $i=0,1,\cdots, n$, we obtain
\begin{align}
\label{fn}f_n(q)=\sum_{k_0+k_1+\cdots+k_{n}=n}\prod_{i=0}^{n} {k_i+k_{i+1}-1 \choose k_{i+1}}q^{{k_0+k_1+\cdots k_n\choose 2}-\sum_{i=0}^{n}{ik_i}}
\end{align}

To deal with this multisum notationally, we let
\begin{align}
\label{DefPq}P_q(k_0,k_1,k_2,\cdots, k_n)=\prod_{m=1}^{n}\left[{k_{m-1}+k_m-1 \atop k_m}\right]_q.
\end{align}
Notice that in (\ref{DefPq}), if some $k_i<0$, then $P_q(k_0,k_1,\cdots,k_n)=0$. If some $k_i=0$ and $P_q$ is non-vanishing, then we must have $k_i=k_{i+1}=\cdots =k_{n}=0$. We begin by proving a recurrence for $P_q(k_0,k_1,k_2,\cdots, k_n)$.
\begin{theorem}
Given any $0\leq \tau_0\leq k_0$, we have
\begin{align}
\label{Pq}P_q(k_0,k_1,k_2,\cdots, k_n)=\sum_{\tau_1,\tau_2,\cdots,\tau_n}
P_q(\tau_0,\tau_1,\cdots,\tau_n)P_q(k_0-\tau_0,k_1-\tau_1,\cdots,k_n-\tau_n)
q^{\sum_{i=0}^{n-1}\tau_i(k_{i+1}-\tau_{i+1})}.
\end{align}
Only terms with $0\leq \tau_i\leq k_i$ for $i=1,2,\cdots, n$, can be non-vanishing, so the right hand side of (\ref{Pq}) is a finite sum.
\end{theorem}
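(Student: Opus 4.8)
The plan is to prove the identity \eqref{Pq} by iterating the single-step $q$-Vandermonde identity
\[
\left[{a+b \atop c}\right]_q = \sum_{j} \left[{a \atop j}\right]_q \left[{b \atop c-j}\right]_q q^{j(b-(c-j))}
\]
along the chain of binomial coefficients that makes up $P_q(k_0,\dots,k_n)$. The cleanest route is induction on $n$, peeling off one factor at a time from the ``tail'' end, so I would first rewrite $P_q(k_0,\dots,k_n) = P_q(k_0,\dots,k_{n-1})\cdot\left[{k_{n-1}+k_n-1\atop k_n}\right]_q$ and aim to combine the inductive hypothesis for the first factor with a $q$-Vandermonde split of the last factor.

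Concretely: assume \eqref{Pq} holds with $n$ replaced by $n-1$, i.e.\ for a fixed choice of $0\le\tau_0\le k_0$ we may write $P_q(k_0,\dots,k_{n-1})$ as a sum over $\tau_1,\dots,\tau_{n-1}$ of $P_q(\tau_0,\dots,\tau_{n-1})P_q(k_0-\tau_0,\dots,k_{n-1}-\tau_{n-1})q^{\sum_{i=0}^{n-2}\tau_i(k_{i+1}-\tau_{i+1})}$. Then I multiply by $\left[{k_{n-1}+k_n-1\atop k_n}\right]_q$ and apply $q$-Vandermonde to this last $q$-binomial, splitting $k_{n-1}+k_n-1 = (\tau_{n-1}+\tau_n-1) + ((k_{n-1}-\tau_{n-1})+(k_n-\tau_n))$ with the summation index $\tau_n$ running over $0\le\tau_n\le k_n$. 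The $q$-Vandermonde identity then produces exactly the two new tail factors $\left[{\tau_{n-1}+\tau_n-1\atop \tau_n}\right]_q$ and $\left[{(k_{n-1}-\tau_{n-1})+(k_n-\tau_n)\atop k_n-\tau_n}\right]_q$, which upgrade $P_q(\tau_0,\dots,\tau_{n-1})$ to $P_q(\tau_0,\dots,\tau_n)$ and likewise for the complementary factor, together with a power-of-$q$ correction term. One must check that the exponent contributed by $q$-Vandermonde is precisely $\tau_{n-1}(k_n-\tau_n)$, so that adding it to $\sum_{i=0}^{n-2}\tau_i(k_{i+1}-\tau_{i+1})$ gives the claimed total exponent $\sum_{i=0}^{n-1}\tau_i(k_{i+1}-\tau_{i+1})$. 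The base case $n=1$ is a single application of $q$-Vandermonde, or even $n=0$ (where $P_q$ is the empty product $1$ and the identity is trivial).

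The main subtlety — and the step I expect to require the most care — is bookkeeping the exponents and the vanishing/support conditions, in particular making sure that the split on the last factor uses the ``correct'' form of $q$-Vandermonde (there are several versions differing by which variable carries the $q$-shift), so that the recorded exponents telescope into the stated sum rather than into something off by a sign or a boundary term; one should also double-check the degenerate cases where some $k_i$ or $\tau_i$ vanishes, using the remark that $P_q$ vanishes unless the $\tau_i$ are ``nested inside'' the $k_i$. Finally, the claim that the right-hand side is a finite sum with only $0\le\tau_i\le k_i$ contributing is immediate from the vanishing convention on $q$-binomials with a negative lower or upper argument, recorded just before the statement, so no separate argument is needed there.
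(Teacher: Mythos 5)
Your strategy coincides with the paper's: its proof is literally a ``repeated use'' of a single-factor $q$-Vandermonde identity, applied to the factors of $P_q$ one at a time with each split governed by the previous summation variable, and your induction on $n$ is just the formal unrolling of that iteration, with the same exponent bookkeeping ($\tau_{n-1}(k_n-\tau_n)$ added at each step) and the same observation about finiteness of the sum. The one point that needs fixing is exactly the one you flagged: the version of $q$-Vandermonde you quoted, $\left[{a+b\atop c}\right]_q=\sum_j\left[{a\atop j}\right]_q\left[{b\atop c-j}\right]_qq^{j(b-c+j)}$, cannot be applied with your proposed split. The two tail factors you must produce are $\left[{\tau_{n-1}+\tau_n-1\atop \tau_n}\right]_q$ and $\left[{(k_{n-1}-\tau_{n-1})+(k_n-\tau_n)-1\atop k_n-\tau_n}\right]_q$ (note the second top also carries a $-1$, forced by the definition of $P_q$), so the two tops sum to $k_{n-1}+k_n-2$, one less than the top $k_{n-1}+k_n-1$ being split; moreover in your quoted identity $a$ and $b$ are fixed while $j$ varies, whereas your ``$a$'' $=\tau_{n-1}+\tau_n-1$ depends on the summation index $\tau_n$. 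The identity actually needed is the negative-upper-parameter form of $q$-Vandermonde,
\[
\left[{x+y+c-1\atop c}\right]_q=\sum_{j=0}^{c}\left[{x+j-1\atop j}\right]_q\left[{y+c-j-1\atop c-j}\right]_q q^{x(c-j)},
\]
applied with $x=\tau_{n-1}$, $y=k_{n-1}-\tau_{n-1}$, $c=k_n$, $j=\tau_n$; this is precisely the single-step identity the paper records, it produces the exponent $\tau_{n-1}(k_n-\tau_n)$ you predicted, and with it the rest of your inductive argument goes through verbatim.
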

\begin{proof}
By the $q$-Vandermonde theorem, we have
\begin{align*}
\sum_{\tau_m}\left[{\tau_{m-1}+\tau_m-1 \atop \tau_m}\right]_q
\left[{k_{m-1}-\tau_{m-1}+k_m-\tau_m-1 \atop k_m-\tau_m}\right]_q
q^{\tau_{m-1}(k_m-\tau_m)}=\left[{k_{m-1}+k_m-1 \atop k_m}\right]_q
\end{align*}
for any $0\leq \tau_{m-1}\leq k_{m-1}$.

By repeated use of the above application of the $q$-Vandermonde theorem, we obtain
\begin{align*}
P_q(k_0,k_1,k_2,\cdots, k_n)
=&\sum_{\tau_1,\tau_2,\cdots,\tau_n}
\prod_{m=1}^{n}\left[{\tau_{m-1}+\tau_m-1 \atop \tau_m}\right]_q
\left[{k_{m-1}-\tau_{m-1}+k_m-\tau_m-1 \atop k_m-\tau_m}\right]_q
q^{\tau_{m-1}(k_m-\tau_m)}\\
=&\sum_{\tau_1,\tau_2,\cdots,\tau_n}P_q(\tau_0,\tau_1,\cdots,\tau_n)
P_q(k_0-\tau_0,k_1-\tau_1,\cdots,k_n-\tau_n)q^{\sum_{m=1}^{n}\tau_{m-1}(k_{m}-\tau_m)}.
\end{align*}
\end{proof}
Taking $q=1$, we obtain the following corollary:
\begin{corollary}
Given any $0\leq \tau_0\leq k_0$
\begin{align}
P_1(k_0,k_1,k_2,\cdots, k_n)=\sum_{\tau_1,\tau_2,\cdots,\tau_n}
P_1(\tau_0,\tau_1,\tau_2,\cdots,\tau_n)P_1(k_0-\tau_0,k_1-\tau_1,k_2-\tau_2,\cdots,k_n-\tau_n)
\end{align}
where
\begin{align*}
P_1(k_0,k_1,\cdots,k_n)=\prod_{m=1}^n{k_{m-1}+k_m-1\choose k_m}.
\end{align*}
\end{corollary}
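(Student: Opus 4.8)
The plan is to derive this Corollary simply as the $q\to 1$ specialization of the theorem proved immediately above, so no genuinely new argument is needed. First I would note that $\left[{a \atop b}\right]_q$ is a polynomial in $q$ with $\left[{a \atop b}\right]_1 = \binom{a}{b}$; hence $P_q(k_0,\dots,k_n)$ is a polynomial in $q$, and its value at $q=1$ is exactly the product $\prod_{m=1}^{n}\binom{k_{m-1}+k_m-1}{k_m}$ appearing in the statement, so the two descriptions of $P_1$ agree.

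Next I would substitute $q=1$ into identity (\ref{Pq}). The factor $q^{\sum_{i=0}^{n-1}\tau_i(k_{i+1}-\tau_{i+1})}$ collapses to $1$, each $P_q$ on the right collapses to the corresponding $P_1$, and the left side becomes $P_1(k_0,\dots,k_n)$, yielding precisely the asserted convolution identity for every choice of $0\leq\tau_0\leq k_0$.

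The only point that deserves a remark — and the closest thing to an obstacle — is that this substitution is legitimate: by the vanishing conventions recorded just before the theorem (a negative argument makes the whole product vanish, and a zero argument forces all later arguments to vanish), for fixed $(k_0,\dots,k_n)$ only the finitely many tuples $(\tau_1,\dots,\tau_n)$ with $0\leq\tau_i\leq k_i$ contribute. Thus both sides of (\ref{Pq}) are finite sums of polynomials in $q$ and may be evaluated at $q=1$ term by term, whereupon the Corollary follows at once.
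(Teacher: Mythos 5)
Your proposal is correct and matches the paper's own treatment, which derives the corollary simply by setting $q=1$ in the preceding $q$-Vandermonde recurrence for $P_q$. Your additional remarks on the finiteness of the sum and the term-by-term evaluation at $q=1$ are sound but only make explicit what the paper leaves implicit.
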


In light of (\ref{fn}), define $C^\prime_n(q)$ as follows
\begin{align*}
q^{n\choose 2}C^\prime_n\left(\frac{1}{q}\right)=&\sum_{k_0+k_1+\cdots k_n=n}q^{{k_0+k_1+\cdots k_n\choose 2}-\sum_{i=0}^{n}ik_i}P_1(k_0,k_1,\cdots,k_n)\\
                 =&q^{n\choose 2}\sum_{k_0+k_1+\cdots k_m=n}q^{-\sum_{i=0}^{n}ik_i}P_1(k_0,k_1,\cdots,k_n).
\end{align*}
Consider the following expression:
\begin{align}
\label{sum}\sum_{l=0}^{n}q^{l}C^\prime_lC^\prime_{n-l}=\sum_{l=0}^{n}q^{l}
\sum_{\tau_0+\tau_1+\cdots\tau_n=l}q^{\sum_{i= 0}^{n}i\tau_i}P_1(\tau_0,\tau_1,\cdots,\tau_n)
\sum_{\mu_0+\mu_1+\cdots\mu_n=n-l}q^{-\sum_{i= 0}^{n}i\mu_i}P_1(\mu_0,\mu_1,\cdots,\mu_n).
\end{align}
Notice that $P(\tau_0,\tau_1,\cdots,\tau_n)=P(1,\tau_0,\tau_1,\cdots,\tau_{n})$ and $\sum_{i=0}^{n}(i+1)\tau_i=l+\sum_{i=0}^{n}i\tau_i$. We therefore have
\begin{align*}
\sum_{l=0}^{n}q^{l}C^\prime_lC^\prime_{n-l}
=&\sum_{l=0}^{n}\sum_{1+\tau_0+\tau_1+\cdots\tau_n=l+1}q^{-\sum_{i= 0}^{n}(i+1)\tau_i}P_1(\tau_0,\tau_1,\cdots,\tau_n)
\sum_{\mu_0+\mu_1+\cdots\mu_n=n-l}q^{-\sum_{i= 0}^{n}i\mu_i}P_1(\mu_0,\mu_1,\cdots,\mu_n)\\
=&\sum_{l=0}^{n}\sum_{1+\tau_0+\tau_1+\cdots\tau_n=l+1,\atop
\mu_0+\mu_1+\cdots\tau_n=n-l}q^{-\sum_{i= 0}^{n+1}i(\tau_{i-1}+\mu_i)}P_1(1,\tau_0,\tau_1,\cdots,\tau_n)
P_1(\mu_0,\mu_1,\cdots,\mu_n)\\
=&\sum_{k_0+k_1+k_2+\cdots k_{n+1}=n+1
}q^{-\sum_{i= 1}^{n+1}ik_i}\sum_{(\tau_0,\tau_1,\cdots,\tau_n)}P_1(1,\tau_0,\tau_1,\cdots\tau_n)
P_1(k_0-1,k_1-\tau_0,\cdots,k_{n+1}-\tau_n)\\
=&\sum_{k_0+k_1+k_2+\cdots=n+1
}q^{-\sum_{i=0}^{n+1}ik_i}P_1(k_0,k_1,k_2,\cdots,k_{n+1})=C^\prime_{n+1}
\end{align*}
Thus we have the following recurrence for $C^\prime_{n}(q)$:
\begin{align*}
C^\prime_{n+1}(q)=\sum_{l=0}^{n}q^{l}C^\prime_l(q)C^\prime_{n-l}(q)
\end{align*}

This is exactly the recurrence for (Cartlitz) $q$-Catalan numbers $C_n(q)$, so it must the case that $C^\prime_n(q) = C_n(q)$. This is what we wanted to show.

\subsection{A sequence of q,t-Catalan numbers} We may now define a natural sequence of $q,t$-Catalan numbers by
\begin{align}
\label{qt}q^{{n}\choose {2}}C_n\left(\frac{1}{q},t\right)=\sum_{\lambda \in \mathcal{P}, \atop n_\lambda \leq n} q^{|\lambda|} t^{h_{1,1}(\lambda)}
=&\sum_{\lambda \in \mathcal{P},\atop \lambda_1+l(\lambda) \leq n}\prod_{i \geq 1} \left[\lambda_i-\lambda_{i+2}+1 \atop \lambda_{i+1}-\lambda_{i+2}\right]_t q^{{\lambda_1\choose 2}+|\lambda|}.
\end{align}

These $q,t$-Catalan numbers bear some resemblance to the $q,t$-Catalan numbers studied by e.g. Haglund and Haiman, as can be seen from the following interpretation of their $q,t$-Catalan numbers:
\begin{align*}
q^{{n \choose 2}} \overline{C_n} \left(\frac{1}{q},t \right) = \sum_{\lambda \in \mathcal{P}, \atop n_\lambda \leq n} q^{|\lambda|} t^{h_{1,1}(\lambda) + h_0(\lambda)},
\end{align*}
where $h_0(\lambda)$ is defined to be the number of hook difference zero cells of $\lambda$, i.e. the number of cells with equal arm and leg lengths, and $\overline{C_n}$ is the $q,t$-Catalan number defined by Haglund and Haiman. Section 7 of \cite{LW} describes more about this interpretation of these $q,t$-Catalan numbers.

\begin{remark}
One may define a $q,t$-Catalan number, analogous to (\ref{qt}),
using $h_0$ instead of $h_{1,1}$. In fact, $h_0$ is constant on $2$-diagonal equivalence classes.
\end{remark}

\section{Refinement by $2$-core}\label{2core_section}

In the following section we consider the statistics $h_{\alpha,\beta}$ when $\alpha+\beta=2$ and their connection to the $2$-core of a partition. In \cite{BFN}, the authors proved that the statistics $h_{1,1}$ and $h_{2,0}$ are equidistributed over partitions of $n$. We conjecture they are equidistributed over partitions of $n$ with any given $2$-core, and give some partial results toward proving this.

\subsection{$m$-quotients and $m$-cores}

Given a partition $\lambda$, its \textbf{edge sequence} $M$ is a doubly infinite sequence of $0$'s and $1$'s defined by traversing the boundary shape occupied by complement of the Young diagram of $\lambda$ in the fourth quadrant with the upper left corner at the origin from bottom-left to top-right, with each $0$ representing a step to the north and each $1$ representing a step to the east.  We index the edge sequence by $\mathbb{Z}$ so that $0$-th term of the edge sequence corresponds to the step leaving the main diagonal of the Young diagram, and we denote the $i$-th term of the edge sequence $M$ by $M(i)$. Note that an edge sequence necessarily starts with infinitely many $0$'s and ends with infinitely many $1$'s, and the number of $1$'s before the $0$-th term (exclusive) equals  the number of $0$'s after the $0$-th term (inclusive).

Conversely, given any doubly infinite sequence $M$ of $0$'s and $1$'s that starts with infinitely many $0$'s and ends with infinitely many $1$'s, there is a unique integer $k$ such that defining $M'$ by $M'(i) = M(k+i)$ yields an edge sequence for some partition $\lambda$. We may therefore identify partitions with doubly infinite sequences of $1$'s and $0$'s that start with an infinite number of $0$'s and end with a infinite number of $1$'s, up to a shifting of the indices.

The \textbf{$m$-quotient} of partition $\lambda$ with edge sequence $M$ is an $m$-tuple of partitions $(\lambda_0, \lambda_1, ..., \lambda_{m-1})$ defined by $M_i(j) = M(m(j+k_i) + i)$ for some unique integer $k_i$ chosen so that $M_i$ is an edge sequence. We define $\lambda_i$ to be the unique partition corresponds to $M_i$. We call the $m$-tuple of integers $(k_0, k_1, ..., k_{m-1})$ the \textbf{$m$-shift} of $\lambda$. Note that we always have $\sum k_i = 0$.

Conversely, given any $m$-tuple of partitions and $m$-tuple of integers with zero sum, one may construct a partition with them as its $m$-quotient and $m$-shift by reversing the above construction. Thus partitions may be naturally identified with pairs of $m$-quotients and $m$-shifts.

\begin{example}
Consider the partition $\lambda=5,4,1,0,0,\cdots$, place its Young diagram in the fourth quadrant, then the corresponding edge sequence is $\cdots, 0,0,1,0,1,1,1,0,1,0,1,0,1,1,\cdots$. For the $2$-core and $2$-quotient of it, the edge sequences for the $2$-quotients are $\cdots, 0,0,1,0,0,1,1,\cdots$ and $\cdots,0,0,1,1,\cdots$, and the $2$-shift is $(2,-2)$.
\end{example}
We give a formula for computing the $m$-shift of a partition based on its Young diagram alone.

\begin{proposition}\label{mshift} Given a partition $\lambda$, let $N_i$ be the number of cells whose $(-1,1)$-label is congruent to $i$ mod $m$. Then the $m$-shift of $\lambda$ is $(N_0-N_1, N_1-N_2, ..., N_{m-1}-N_0)$. As a reminder, for a valid multigraph $M$, $N_i(M)$ are the number of north edges leaving $i$ if $i$ is a vertex of $M$ and $0$ otherwise. And here $M$ is the corresponding multigraph of $\lambda$, for simplicity we omit $M$.
\end{proposition}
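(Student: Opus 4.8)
The plan is to pass to edge sequences and prove the identity by induction on $|\lambda|$, adding one cell at a time.

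First I would extract an explicit formula for the $m$-shift straight from the balance condition defining an edge sequence. Write $M$ for the edge sequence of $\lambda$, normalized as in the text ($M(\ell)=0$ for $\ell\ll 0$, $M(\ell)=1$ for $\ell\gg 0$, the $0$-th term leaves the main diagonal, and $\#\{\ell<0:M(\ell)=1\}=\#\{\ell\ge 0:M(\ell)=0\}$). For a residue $i\in\{0,\dots,m-1\}$ set $M_i^{\circ}(j):=M(mj+i)$, so that the $i$-th component of the $m$-quotient has edge sequence $M_i(j)=M_i^{\circ}(j+k_i)$, where $(k_0,\dots,k_{m-1})$ is the $m$-shift. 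Call $\operatorname{def}(c):=\#\{j\ge 0:c(j)=0\}-\#\{j<0:c(j)=1\}$ the \emph{deficiency} of a doubly infinite $0$--$1$ sequence $c$ that is eventually $0$ to the left and eventually $1$ to the right; the text's balance condition is exactly $\operatorname{def}=0$, and advancing the index by one lowers $\operatorname{def}$ by exactly one. Hence $k_i=\operatorname{def}(M_i^{\circ})$, i.e.
\[
k_i=\#\{\ell\equiv i\ (\mathrm{mod}\ m)\,:\,\ell\ge i,\ M(\ell)=0\}-\#\{\ell\equiv i\ (\mathrm{mod}\ m)\,:\,\ell<i,\ M(\ell)=1\},
\]
a finite quantity.

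Next I would record how $M$ and the numbers $N_j$ change when a single cell is added. If $\mu$ is obtained from $\lambda$ by adding the cell of content (that is, $(-1,1)$-label) $d$, then the corresponding addable corner is a valley ``north-then-east'' of the border path at the two consecutive positions $d-1,d$ — so before the addition $M(d-1)=0$ and $M(d)=1$ — and adding the cell replaces this by $M(d-1)=1$, $M(d)=0$, leaving $M$ unchanged at every other position. (This uses the dictionary identifying the $d$-th anti-diagonal of the Young diagram with position $d$ of $M$; it is readily verified on small cases such as $\varnothing\to(1)$, $(1)\to(2)$, $(1)\to(1,1)$.) Substituting this local move into the displayed formula, only the $i$ with $i\equiv d$ or $i\equiv d-1\ (\mathrm{mod}\ m)$ change, and a short case check yields
\[
k_i(\mu)-k_i(\lambda)=\chi(i\equiv d)-\chi(i\equiv d-1)\qquad(\text{congruences mod }m).
\]
On the other hand the added cell has content $d$, so adding it raises $N_j$ by $1$ precisely for $j\equiv d\ (\mathrm{mod}\ m)$; therefore $(N_i-N_{i+1})(\mu)-(N_i-N_{i+1})(\lambda)=\chi(i\equiv d)-\chi(i+1\equiv d)$, which equals $\chi(i\equiv d)-\chi(i\equiv d-1)$ mod $m$ — the same increment. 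Since the empty partition has $m$-shift $(0,\dots,0)$ and $N_0=\dots=N_{m-1}=0$, and every partition is reached from $\varnothing$ by successively adding addable cells, the identity $k_i=N_i-N_{i+1}$ (indices mod $m$) follows by induction.

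I expect the sole real obstacle to be bookkeeping: fixing the index conventions for $M$ so that the ``$\ge i$'' versus ``$<i$'' split in the displayed formula is correct, and checking that adding the cell of content $d$ is exactly the move $M(d-1):0\to 1$, $M(d):1\to 0$ — i.e.\ nailing down the correspondence between anti-diagonals of $\lambda$ and positions of $M$. After that, the two increment computations are routine. (Alternatively one can first note that both vectors are invariant under removing an $m$-ribbon, which is the residue-local move $M(p):1\to 0$, $M(p+m):0\to 1$ — it preserves each $\operatorname{def}(M_i^{\circ})$, and preserves the multiset of content residues since an $m$-ribbon covers $m$ consecutive anti-diagonals — thereby reducing to the case of an $m$-core; but the cell-addition induction seems shortest.)
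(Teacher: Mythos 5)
Your proposal is correct, and it shares its first half with the paper's proof: the identification $k_i=\#\{\ell\equiv i \ (\mathrm{mod}\ m):\ell\ge i,\ M(\ell)=0\}-\#\{\ell\equiv i\ (\mathrm{mod}\ m):\ell<i,\ M(\ell)=1\}$ is exactly the quantity the paper's computation terminates in (there it is simply asserted to equal $k_i$; your ``deficiency'' argument, using that shifting the index by one changes $\operatorname{def}$ by one, is a clean justification of that step). Where you diverge is in connecting this to the $N_i$: the paper writes down the diagonal counts $n_r$ directly in terms of the edge sequence ($n_r=|\{t\ge r:M(t)=0\}|$ for $r\ge 0$, $n_r=|\{t<r:M(t)=1\}|$ for $r\le 0$) and telescopes $N_i-N_{i+1}=\sum_r(n_{rm+i}-n_{rm+i+1})$ into the displayed formula for $k_i$, whereas you induct on single-cell additions, checking that the local move $M(d-1):0\to 1$, $M(d):1\to 0$ changes both $k_i$ and $N_i-N_{i+1}$ by $\chi(i\equiv d)-\chi(i\equiv d-1)$. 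Both are valid; the paper's telescoping is shorter once the formulas for $n_r$ are accepted, while your induction localizes all the bookkeeping to a single addable corner and is easier to verify term by term (and your parenthetical ribbon-removal alternative is essentially the standard proof that the $m$-shift is an $m$-core invariant). I see no gap; the only care needed, as you note, is the index convention for $M$, and your checks on $\varnothing\to(1)$, $(1)\to(2)$, $(1)\to(1,1)$ pin it down correctly.
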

\begin{proof} Let $n_r$ be the number of cells in $\lambda$ with $(-1,1)$-label $r$, and let $M$ be the edge sequence of $\lambda$. Then $n_r = |\{t \geq r | M(t) = 0\}|$ if $r\geq 0$ and $n_r = |\{t < r | M(t) = 1\}|$ if $r\leq 0$. (If we place the Young diagram in the fourth quadrant, $n_r$ is the number of cells on the diagonal $y=x-r$) Thus, for $i = 0, 1, ..., m-1$,
\begin{align*}
N_i - N_{i+1} &= \sum_{r \in \mathbb{Z}} (n_{rm+i} - n_{rm+i+1}) \\
&= \sum_{r \geq 0} (n_{rm+i} - n_{rm+i+1}) - \sum_{r < 0} (n_{rm+i+1} - n_{rm+i}) \\
&= |\{r\geq 0| M(rm+i) = 0\}| - |\{r < 0| M(rm+i) = 1\}| \\
&= k_i.
\end{align*}
\end{proof}

Note that removing an $lm$-hook in $\lambda$ corresponds to removing an $l$-hook in a component of its $m$-quotient, and leaves its $m$-shift unchanged.

The \textbf{$m$-core} of $\lambda$ is defined as the partition obtained by removing $m$-hooks in $\lambda$ until this is not possible. By the last paragraph, this is independent of the removal process, its $m$-shift is the same as that of $\lambda$, and its $m$-quotient consists of empty components. Thus $m$-cores naturally correspond to $m$-shifts, which means that partitions may be naturally identified with pairs of $m$-cores and $m$-quotients. Moreover, the size of a partition is equal to the sum of the size of its $m$ core and $m$ times the size of its $m$-quotient (defined as the sum of the sizes of its $m$ components).

For this section, we only need the case $m = 2$. (We will treat the general case in Section \ref{mcore_section}.) $2$-shifts are $(j, -j)$, and $2$-cores are staircase shapes $\{k, k-1, ..., 1\}$ for $k = \max\{-2j, 2j-1\}$, which have size $\binom{2j}{2}$. In particular, a $2$-core is uniquely determined by its size, which is always a triangular number. Proposition \ref{mshift} yields the following formula for computing the $2$-core of a partition.

\begin{lemma}Given partition $\mu$ with ($2$-)diagonal pattern $T(\mu)=\{1,2,\cdots,\lambda_1,\lambda_2,\cdots \lambda_k\}$, then the $2$-core of $\mu$ has size $\binom{2|\lambda|_a}{2}$, where $|\lambda|_a$ is the alternating sum
\begin{align*}
|\lambda|_a &= 1-2+3-\cdots +(-1)^{\lambda_1-1}\lambda_1+(-1)^{\lambda_1}\lambda_2+\cdots+(-1)^{\lambda_1+k-2}\lambda_k\\
&= (-1)^{\lambda_1-1}(\lceil\frac{\lambda_1}{2}\rceil+\sum_{i=2}^{k}(-1)^i\lambda_i)\\
&= (-1)^{l(\lambda^T)}\lfloor \frac{e(\lambda^T)-o(\lambda^T)}{2} \rfloor ,
\end{align*}
and $e(\mu), o(\mu)$ denote the number of even parts and odd parts of a partition, respectively.
\end{lemma}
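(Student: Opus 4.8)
The plan is to use Proposition \ref{mshift} with $m = 2$. That proposition tells us that the $2$-shift of $\mu$ is $(N_0 - N_1, N_1 - N_0)$, where $N_i$ counts cells of $\mu$ whose $(-1,1)$-label is congruent to $i \bmod 2$; equivalently, since cells on the $r$-th $(-1,1)$-diagonal all carry label $r$, we have $N_0 = \sum_{r \text{ even}} n_r$ and $N_1 = \sum_{r \text{ odd}} n_r$, where $n_r = d^{(-1,1)}_r(\mu)$ is the $r$-th diagonal length. As recorded in the discussion preceding the lemma, a $2$-shift $(j,-j)$ corresponds to a $2$-core of size $\binom{2j}{2}$, so the $2$-core of $\mu$ has size $\binom{2(N_0 - N_1)}{2} = \binom{2|N_0 - N_1|}{2}$ (the last equality because $\binom{2j}{2} = \binom{-2j}{2} + \text{nothing}$... actually $\binom{2j}{2} = j(2j-1)$ is not even in $j$, so one must be careful: $\binom{2j}{2}$ and $\binom{-2j}{2} = \binom{2j+1}{2}$ differ, but $\max\{-2j, 2j-1\}$ is the staircase height, and in either case the size equals $\binom{2j}{2}$ when we take $j = N_0 - N_1$ with the correct sign). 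So the first task is simply to identify $N_0 - N_1$ with $|\lambda|_a$.

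The second task is the chain of three equalities defining $|\lambda|_a$. Here I would argue directly from the diagonal pattern. By Proposition \ref{diag} (specialized to $m=2$), the full $(-1,1)$-diagonal pattern of $\mu$, read from the main diagonal outward on the arm side, is $\{1, 2, \ldots, \lambda_1 - 1, \lambda_1, \lambda_2, \ldots, \lambda_k\}$ — wait, this is the $(1,1)$-labelling convention used earlier; I need to reconcile it with the $(-1,1)$ convention of Proposition \ref{mshift}. The cleanest route: place the Young diagram in the fourth quadrant as in Proposition \ref{mshift}, so that $n_r$ (number of cells on the diagonal $y = x - r$) for $r \le 0$ records the "first $\lambda_1$ diagonals" $1, 2, \ldots, \lambda_1$ and for $r > 0$ records $\lambda_2, \lambda_3, \ldots, \lambda_k$, reading outward. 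Then $N_0 - N_1 = \sum_r (-1)^r n_r$ up to an overall sign, and substituting $n_r = r$ for the staircase part $r = 1, \ldots, \lambda_1$ and $n_{\lambda_1 - 1 + i} = \lambda_{i+1}$ for $i \ge 1$ produces exactly the alternating sum $1 - 2 + 3 - \cdots \pm \lambda_1 \mp \lambda_2 \pm \cdots$. The second equality (collapsing $1 - 2 + 3 - \cdots \pm \lambda_1$ to $(-1)^{\lambda_1 - 1}\lceil \lambda_1/2 \rceil$) is the elementary identity $\sum_{j=1}^{N}(-1)^{j-1} j = (-1)^{N-1}\lceil N/2\rceil$, and factoring out $(-1)^{\lambda_1 - 1}$ from the remaining terms is bookkeeping of signs.

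The third equality, rewriting everything in terms of the conjugate partition $\lambda^T$, is where I expect the only real friction. The point is that $\lambda_1 = l(\lambda^T)$ and $\lambda_2, \ldots, \lambda_k$ encode, via conjugation, the multiplicities of the parts of $\lambda^T$; concretely, $\lambda_i - \lambda_{i+1}$ is the number of parts of $\lambda^T$ equal to... no — more precisely, the number of columns of $\lambda$ of height exactly $i$ is the number of parts of $\lambda^T$ equal to $i$, hmm, I have it backwards again: the number of parts of $\lambda$ equal to $i$ is the number of cells of $\lambda^T$... Let me just say: expand $\sum_{i \ge 2}(-1)^i \lambda_i$ by writing $\lambda_i = \#\{j : \lambda^T_j \ge i\} = \#\{j : (\lambda^T)_j \ge i\}$, swap the order of summation over $i$ and over parts of $\lambda^T$, and for each part of $\lambda^T$ of size $s$ the inner alternating sum $\sum_{i=2}^{s}(-1)^i$ contributes $0$ if $s$ is even-ish and $\pm 1$ depending on parity. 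Collecting, the total becomes $\tfrac12$ times the difference between the count of even parts and odd parts of $\lambda^T$, up to a floor (to absorb the boundary term from the $\lceil \lambda_1/2 \rceil = \lceil l(\lambda^T)/2 \rceil$ piece) and an overall sign $(-1)^{l(\lambda^T)}$. I would verify the sign and the placement of the floor on a couple of small examples (e.g. $\mu = \{5,4,1\}$ from the preceding Example, where the answer should be a $2$-core of size $\binom{4}{2} = 6$) and then present the parity computation as the substantive step. The main obstacle is thus purely this parity/conjugation bookkeeping — there is no conceptual difficulty, just a sign-and-floor computation that must be done carefully.
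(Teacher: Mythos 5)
Your overall route --- apply Proposition \ref{mshift} with $m=2$ to get the $2$-shift $(N_0-N_1,\,N_1-N_0)$, use the fact that the shift $(j,-j)$ determines a $2$-core of size $\binom{2j}{2}$, and reduce everything to the identity $N_0-N_1=|\lambda|_a$ --- is exactly the route the paper intends; the lemma is stated there as a direct consequence of Proposition \ref{mshift} with no further argument supplied. However, your execution of the reduction contains a step that is false as written. The quantities $n_r$ in Proposition \ref{mshift} are the lengths of the diagonals $y=x-r$, i.e.\ the $(-1,1)$-diagonals parallel to the main diagonal, whereas the $2$-diagonal pattern $\{1,2,\dots,\lambda_1,\lambda_2,\dots,\lambda_k\}$ records the lengths of the $(1,1)$-anti-diagonals $i+j=\text{const}$. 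These are different families of cells, and your substitution ``$n_r=r$ for $r=1,\dots,\lambda_1$ and $n_{\lambda_1-1+i}=\lambda_{i+1}$'' is simply not true: for $\mu=(3,1)$ the $(-1,1)$-diagonal lengths are $1,1,1,1$ while the $2$-diagonal pattern is $1,2,1$. The identity you need survives for a reason you never invoke, namely $(-1)^{j-i}=(-1)^{i+j}$, so that the signed cell count $N_0-N_1=\sum_{v\in\mu}(-1)^{j-i}=\sum_{v\in\mu}(-1)^{i+j}$ may be evaluated anti-diagonal by anti-diagonal; the anti-diagonal attached to label $k$ has length $d_k$, and the pattern $d_2,d_3,\dots=1,2,\dots,\lambda_1,\lambda_2,\dots,\lambda_k$ then yields $|\lambda|_a$ on the nose. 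Without this parity observation the identification of $N_0-N_1$ with the alternating sum does not follow from anything you wrote.

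The third equality is the only place a genuine computation is required, and you defer it to ``examples plus bookkeeping.'' It should be carried out: writing $\lambda_i=\#\{j:\lambda^T_j\ge i\}$ and swapping the order of summation gives $\sum_{i\ge2}(-1)^i\lambda_i=\tfrac{\lambda_1}{2}+\tfrac{e(\lambda^T)-o(\lambda^T)}{2}$, and combining this with $\lceil\lambda_1/2\rceil-\lambda_1/2=\tfrac12\chi(2\nmid\lambda_1)$ and $\lambda_1=l(\lambda^T)=e(\lambda^T)+o(\lambda^T)$ produces the third displayed form in both parity cases. Doing this computation also exposes a sign typo in the lemma's middle expression: it should read $\sum_{i=2}^{k}(-1)^{i-1}\lambda_i$, since as printed it already fails for $\lambda=(2,1)$, where the first line gives $0$ and the second gives $-2$. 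Your plan to calibrate signs and floors by testing small examples against that printed line would therefore have sent you chasing a phantom error; the first and third expressions are the correct ones to match.
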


\begin{remark}\label{2core} This means the partitions of $n$ with $2$-core size $\binom{2j}{2}$ are the partitions with diagonal pattern $\{1,2,\cdots,\lambda_1,\lambda_2,\cdots \lambda_k\}$ such that $\binom{\lambda_1}{2} + |\lambda| = n$ and $|\lambda|_a = j$.
\end{remark}

\begin{theorem}
\begin{align*}
\sum_{\lambda\in \mathcal{P}\atop |\lambda|_a=j}\prod_{i\geq 1}{\lambda_i-\lambda_{i+2}+1\choose \lambda_{i+1}-\lambda_{i+2}}q^{{\lambda_1\choose 2}+|\lambda|}=\frac{q^{2j\choose 2}}{\prod_{i\geq 1}(1-q^{2i})^2}.
\end{align*}
\end{theorem}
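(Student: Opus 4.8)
The plan is to use the correspondence between partitions and pairs consisting of a $2$-core and a $2$-quotient. By Remark \ref{2core}, the partitions appearing in the sum on the left — those with $|\lambda|_a = j$ — are exactly the partitions $\mu$ whose $2$-core has size $\binom{2j}{2}$. The left-hand side is, via Theorem \ref{multisumthm} with $t = 1$ (equivalently Corollary \ref{multisumcor} restricted to a fixed alternating sum), the generating function $\sum_\mu q^{|\mu|}$ over all partitions $\mu$ with $2$-core size $\binom{2j}{2}$. So the statement reduces to the purely combinatorial claim that
\begin{align*}
\sum_{\mu:\ 2\text{-core of }\mu\text{ has size }\binom{2j}{2}} q^{|\mu|} = \frac{q^{\binom{2j}{2}}}{\prod_{i\geq 1}(1-q^{2i})^2}.
\end{align*}

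First I would invoke the fact, established in this section, that $\mu$ is determined by the pair $(\text{$2$-core}, \text{$2$-quotient}) = (\gamma, (\mu_0, \mu_1))$, and that $|\mu| = |\gamma| + 2(|\mu_0| + |\mu_1|)$. Since the $2$-core is fixed to be the staircase of size $\binom{2j}{2}$ and $\mu_0, \mu_1$ range over \emph{all} pairs of partitions independently (the $2$-shift is pinned down by the $2$-core, so there is no further constraint), the generating function factors as
\begin{align*}
q^{\binom{2j}{2}} \left( \sum_{\nu \in \mathcal{P}} q^{2|\nu|} \right)^2 = q^{\binom{2j}{2}} \left( \prod_{i \geq 1} \frac{1}{1-q^{2i}} \right)^2,
\end{align*}
which is exactly the claimed right-hand side. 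The only remaining task is to match the left-hand side of the theorem with $\sum q^{|\mu|}$ over this family: this is precisely Corollary \ref{multisumcor} for $m=2$ after one observes that fixing $|\lambda|_a = j$ partitions the index set $\mathcal{S}_2 = \mathcal{P}$ according to $2$-core size, using the Lemma immediately preceding the theorem.

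I expect the main (and only genuine) obstacle to be bookkeeping rather than conceptual: one must check that the sum of $2$-quotient sizes contributes with the correct weight (a factor of $2$ in the exponent of $q$), that the $2$-shift imposes no hidden restriction once the $2$-core is fixed, and that the minimal-size partition with the given $2$-core is the core itself (contributing the prefactor $q^{\binom{2j}{2}}$). All three follow from the facts recorded earlier in the section — namely that partitions correspond bijectively to pairs of $2$-cores and $2$-quotients, that size is additive in the stated way, and that $2$-cores of size $\binom{2j}{2}$ are unique. A clean alternative, avoiding the $2$-quotient machinery entirely, is to sum the known identity $\sum_{\mu \in \mathcal{P}} q^{|\mu|} = \prod_i \frac{1}{1-q^i}$ refined by $2$-core, using that every partition has a unique $2$-core; the $2$-core generating function $\sum_j q^{\binom{2j}{2}}$ times $\frac{1}{\prod(1-q^{2i})^2}$ must then equal $\prod \frac{1}{1-q^i}$, which one verifies is a standard identity. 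Either route reduces the theorem to Corollary \ref{multisumcor} plus elementary generating-function algebra.
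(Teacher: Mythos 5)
Your proposal is correct and follows essentially the same route as the paper: the paper's proof likewise cites Theorem \ref{multisumthm} (at $t=1$) together with Remark \ref{2core} to identify the left side as the generating function for partitions with $2$-core size $\binom{2j}{2}$, and then uses the $2$-core/$2$-quotient decomposition with $|\mu| = |\gamma| + 2(|\mu_0|+|\mu_1|)$ to evaluate it as the right side. You have merely spelled out the bookkeeping that the paper leaves implicit.
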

\begin{proof}By Theorem \ref{multisumthm} and Remark \ref{2core}, the left side is the generating function for partitions with $2$-core size $\binom{2j}{2}$, which is also the right side.
\end{proof}

We now propose a conjectural refinement of Theorem \ref{mainthm}. Let $\mathcal{P}_j(n)$ denote the set of partitions of $n$ with $2$-core size ${2j\choose 2}$, and let $\overline{\mathcal{P}_j}(n)$ denote the set of partitions with $2$-quotient size $n$ and $2$-core size ${2j \choose 2}$. Note that $\overline{\mathcal{P}_j}(n)= \mathcal{P}_j(2n+{2j \choose 2}).$
\begin{conjecture}\label{cj1}
We conjecture that the following three quantities are equidistributed over $\mathcal{P}_j(n)$:
\begin{enumerate}
\item $h_{1,1}(\mu)$,
\item $h_{2,0}(\mu)$,
\item $a_2(\mu)$.
\end{enumerate}
Moreover, we conjecture that the number of partitions $\mu$ in $\mathcal{P}_j(n)$ with $h_{1,1}(\mu)=m$ is $A(\frac{n-{2j\choose 2}}{2},m)$, where $A(n,m)$ is defined by the following generating function
\begin{align*}
\sum_{n=0}^{\infty}\sum_{ m=0}^{n} A(n,m)q^nt^m=\prod_{i\geq 1}\frac{1}{(1-q^i)(1-tq^i)}.
\end{align*}
\end{conjecture}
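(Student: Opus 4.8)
I would split the conjecture into the explicit distribution of $h_{1,1}$ and the matching of the three distributions, and reduce each to a generating function identity over the partitions with a fixed $2$-core, where a single obstacle becomes visible. Write $\mathcal{P}_j=\bigcup_n\mathcal{P}_j(n)$ for the partitions whose $2$-core has size $\binom{2j}{2}$. For $h_{1,1}$: by Remark \ref{2core} (equivalently, the $2$-quotient bijection), $\mathcal{P}_j$ is exactly the set of partitions whose $2$-diagonal pattern is $\{1,2,\dots,\lambda_1,\lambda_2,\dots,\lambda_k\}$ for a partition $\lambda$ with alternating sum $|\lambda|_a=j$, and such a partition has size $\binom{\lambda_1}{2}+|\lambda|$; so by Corollary \ref{generatingfunction},
\begin{align*}
\sum_{\mu\in\mathcal{P}_j}t^{h_{1,1}(\mu)}q^{|\mu|}=\sum_{\lambda\in\mathcal{P},\,|\lambda|_a=j}q^{\binom{\lambda_1}{2}+|\lambda|}\prod_{i\geq1}\left[\lambda_i-\lambda_{i+2}+1\atop\lambda_{i+1}-\lambda_{i+2}\right]_t ,
\end{align*}
and the $h_{1,1}$ part of the conjecture is equivalent to showing that this series equals $q^{\binom{2j}{2}}\prod_{i\geq1}\frac{1}{(1-q^{2i})(1-tq^{2i})}$. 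At $t=1$ this is the theorem stated just before the conjecture, and the equidistribution of $h_{1,1}$ and $a_2$ over $\mathcal{P}_j(n)$ is already known for the coefficients of $t^0,t^1,t^2$ by Proposition \ref{k=0,1,2} summed over the $2$-diagonal equivalence classes of a common $2$-core (the $2$-core being constant on such a class); the new content is the full $t$-dependence and the identification of the distribution with $A(\cdot,\cdot)$.

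For $h_{2,0}$ and $a_2$ I would encode $\mu$ by its difference sequence $d_j=\mu_j-\mu_{j+1}\geq0$, so that $|\mu|=\sum_j j\,d_j$ and $h_{2,0}(\mu)=\sum_j\lfloor d_j/2\rfloor$. Over all of $\mathcal{P}$ this immediately yields
\begin{align*}
\sum_{\mu\in\mathcal{P}}t^{h_{2,0}(\mu)}q^{|\mu|}=\prod_{j\geq1}\frac{1+q^j}{1-tq^{2j}}=\prod_{i\geq1}\frac{1}{(1-q^{2i-1})(1-tq^{2i})}=\sum_{\mu\in\mathcal{P}}t^{a_2(\mu)}q^{|\mu|},
\end{align*}
a short re-derivation of the equidistribution of $h_{2,0}$ and $a_2$ from \cite{BFN}. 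Restricting to $\mathcal{P}_j$ amounts to imposing the $2$-core condition, which by Proposition \ref{mshift} --- or by the lemma computing the $2$-core from the diagonal pattern --- is a prescribed alternating sum built from the parts of $\mu$; the task is to show each restricted generating function is again $q^{\binom{2j}{2}}\prod_i\frac{1}{(1-q^{2i})(1-tq^{2i})}$. Together with the $h_{1,1}$ identity this proves the equidistribution of all three statistics over $\mathcal{P}_j(n)$ and pins down their common distribution as $A\bigl(\tfrac{1}{2}(n-\binom{2j}{2}),\cdot\bigr)$.

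The main obstacle, shared by all three reductions, is that the $2$-core constraint has the form ``a prescribed alternating sum of a sequence,'' which does not factor through any of the available product structures --- the $t$-binomial multisum for $h_{1,1}$, the difference-sequence product for $h_{2,0}$ and $a_2$. Two natural lines of attack. First, introduce an auxiliary variable $z$ recording that alternating sum (weight alternate terms by $z$ and $z^{-1}$), so each restricted generating function becomes a single Laurent coefficient in $z$ extracted from an explicit product, and then evaluate that coefficient; the technical costs are the floor functions in $h_{2,0}$ and, for $h_{1,1}$, recasting $|\lambda|_a$ as a linear functional of the north pattern of Section \ref{multisum_section}. Second, on the $h_{1,1}$ side, work directly from the $2$-quotient bijection $\mu\leftrightarrow(\lambda^{(0)},\lambda^{(1)})$, which already matches $|\mu|$ with $\binom{2j}{2}+2(|\lambda^{(0)}|+|\lambda^{(1)}|)$, and try to express $h_{1,1}(\mu)$ through the pair: a cell of $\mu$ of hook difference one has even hook length $2a$ (arm $a-1$, leg $a$), hence corresponds under the quotient to a cell of $\lambda^{(0)}$ or $\lambda^{(1)}$ of hook length $a$, but the residual ``hook difference one'' condition, once written out in terms of the two $\beta$-sets, records how those $\beta$-sets interleave near that cell and involves the $2$-shift $(j,-j)$, so it does not localize to one component. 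Reconciling the $2$-core constraint with the inversion (hook-difference) statistic \emph{simultaneously} --- precisely what every partial result in the paper sidesteps --- is where the real difficulty lies. Concretely I would first test candidate identities against the conjectural generating function modulo small powers of $q$, and would attempt to push Proposition \ref{k=0,1,2} past $r=2$ only after summing over the $2$-diagonal equivalence classes with a common $2$-core, the regime in which the obstruction noted in the remark following that proposition may dissolve.
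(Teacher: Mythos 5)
This statement is a conjecture; the paper does not prove it, and your proposal rightly does not claim to either. The useful comparison is with the paper's partial results, and there your plan contains a substantive mis-assessment. You declare that ``the main obstacle, shared by all three reductions, is that the $2$-core constraint has the form of a prescribed alternating sum which does not factor through any of the available product structures.'' For $h_{2,0}$ and $a_2$ this is false, and the paper exploits exactly the factorization you say is unavailable. Removing a pair of equal columns from $\mu$ is a sequence of domino removals and hence preserves the $2$-core, so $\mathcal{P}_j$ factors as (the $2$-restricted partitions with that $2$-core) times (partitions into even parts), with the second factor carrying all of $h_{2,0}$; combined with Proposition \ref{2coredist}, which evaluates the first factor as $q^{\binom{2j}{2}}/\prod_{i\geq 1}(1-q^{2i})$, this yields Theorem \ref{genhook20}, i.e.\ precisely the conjectured distribution of $h_{2,0}$ over $\mathcal{P}_j(n)$. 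Likewise Theorem \ref{bij1} gives a $2$-core-preserving bijection matching $a_2$ with $h_{2,0}$ (split off one copy of each odd-multiplicity part, merge the remaining repeated parts into even parts, transpose). So two of the three statistics are fully settled in the paper, and the only genuinely open portion of the conjecture is the distribution of $h_{1,1}$ with fixed $2$-core.

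On that open portion your assessment matches the paper's: the evidence consists of the $t=1$ specialization (the theorem stated just before the conjecture), the coefficients of $t^0,t^1,t^2$ (Proposition \ref{012prop}, via Proposition \ref{k=0,1,2} summed over diagonal classes with a common $2$-core), and the large-core regime (Proposition \ref{pp2} and Theorem \ref{bigcore}), where $h_{1,1}$ localizes to a single component of the $2$-quotient. Your observation that for small cores the hook-difference-one condition fails to localize to one quotient component is accurate and is exactly why Theorem \ref{bigcore} requires the shift entries to be far apart. Your proposed Laurent-coefficient (auxiliary $z$) attack on the restricted multisum is not attempted in the paper; the route the paper suggests instead is to show that $h_{1,1}$ is identically distributed over $\overline{\mathcal{P}_j}(n)$ as $j$ varies, which would transfer the large-core case to all cores. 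Before investing in either route you should incorporate Theorems \ref{bij1} and \ref{genhook20}, which reduce your three reductions to one.
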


\begin{remark}
It is worth noting that $A(n,m)$ is sequence A103923 in the On-Line Encyclopedia of Integers Sequences \cite{OEIS}.
\end{remark}

\begin{remark}
It is worth noting that Corollary 1.3 of \cite{BFN}, which we restate as Theorem \ref{genbfn} in Section \ref{mcore_section}, implies
the three quantities are equidistributed over $\mathcal{P}(n)$, which is a weaker version of the above statement where we concern with $\mathcal{P}_j(n)$.
\end{remark}

\begin{remark}
It is natural to ask whether the above three quantities are equidistributed over diagonal classes. In general this is not the case as stated in Proposition \ref{k=0,1,2}.
\end{remark}

\subsection{Partial Results}

In this subsection, we give some partial results towards Conjecture \ref{cj1}. We prove that the statistics $h_{2,0}$ and $a_2$ are equidistributed over $\mathcal{P}_j(n)$ as desired, and that the statistics $h_{1,1}$ and $h_{2,0}$ are equidistributed over $\mathcal{P}_j(n)$ when $k=\max(2j,1-2j)$ is sufficiently large relative to $n$. We also show that $h_{2,0}$ is identically distributed over $\overline{\mathcal{P}_j}(n)$ as $j$ varies. One possible approach to establishing Conjecture \ref{cj1} completely would be to show that that same is true for $h_{1,1}$, for if this were true it would be possible to transfer our results for $h_{1, 1}$ from $\overline{\mathcal{P}_j}(n)$ with large $j$ to all $\overline{\mathcal{P}_j}(n)$, and so all $\mathcal{P}_j(n)$.

We begin by noting a special case of the conjecture that $h_{1,1}$ and $a_2$ are equidistributed $\mathcal{P}_j(n)$, and then give proofs of the above results.

\begin{proposition}\label{012prop}
For $r = 0, 1, 2$, we have
\begin{align*}
|\{\mu| \mu \in \mathcal{P}_j(n), h_{1,1}(\mu)=r\}| = |\{\mu| \mu \in \mathcal{P}_j(n), a_2(\mu)=r\}|
\end{align*}
\end{proposition}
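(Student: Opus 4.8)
The plan is to reduce Proposition \ref{012prop} to the already-proved Proposition \ref{k=0,1,2} by decomposing $\mathcal{P}_j(n)$ into $2$-diagonal equivalence classes. By Remark \ref{2core}, $\mathcal{P}_j(n)$ is the disjoint union of all $2$-diagonal equivalence classes $D$ whose representing sequence $\{\lambda_i\}$ satisfies $\binom{\lambda_1}{2} + |\lambda| = n$ and $|\lambda|_a = j$. Since both $h_{1,1}$ and $a_2$ are well defined on partitions and each partition lies in exactly one such class, it suffices to show that for each such $D$ and each $r \in \{0,1,2\}$ we have $|\{\mu \in D : h_{1,1}(\mu) = r\}| = |\{\mu \in D : a_2(\mu) = r\}|$. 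But this is exactly Proposition \ref{k=0,1,2} (the second one, stated and proved via departure/initial words), applied to those classes $D$ that happen to sit inside $\mathcal{P}_j(n)$.

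First I would observe that the bijections constructed in the proof of Proposition \ref{k=0,1,2} — namely (\ref{itm1})$\leftrightarrow$(\ref{itma}), (\ref{itm2})$\leftrightarrow$(\ref{itmb}), (\ref{itm3})$\leftrightarrow$(\ref{itmc}), together with the trivial cases $r=0$ and $r=1$ — are all maps \emph{within} a single fixed $2$-diagonal equivalence class: each manipulation of departure/initial words preserves the underlying north pattern $\{N_i\}$, hence (by Proposition \ref{ND}) the diagonal pattern, hence the class $D$. Consequently these bijections restrict to bijections between $\{\mu \in D : h_{1,1}(\mu) = r\}$ and $\{\mu \in D : a_2(\mu) = r\}$ for every class $D$, not merely an equality of cardinalities summed over all of $\mathcal{P}$. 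Summing these restricted bijections over the (finitely many, for fixed $n$) classes $D \subseteq \mathcal{P}_j(n)$ gives the desired equality over $\mathcal{P}_j(n)$.

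The only genuinely new thing to check is that restricting to $\mathcal{P}_j(n)$ is a union of full diagonal classes — i.e., that $h_{1,1}$, $a_2$, $|\lambda|$, and the $2$-core size are all constant on, or respect, $2$-diagonal equivalence in the sense needed. The size $|\lambda| = \binom{\lambda_1}{2} + |\lambda|$ in the $s$-variables and the alternating sum $|\lambda|_a$ determining the $2$-core are functions of the diagonal pattern alone (Proposition \ref{area} and Remark \ref{2core}), so $\mathcal{P}_j(n)$ is indeed a union of diagonal classes; meanwhile $h_{1,1}$ and $a_2$ vary within each class, which is precisely what Proposition \ref{k=0,1,2} controls. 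I expect the main (and only) obstacle to be purely expository: making sure the reader sees that the word-level bijections in the proof of Proposition \ref{k=0,1,2} are class-preserving, so that the equidistribution statement automatically refines from $\mathcal{P}$ to any sub-union of diagonal classes, in particular to $\mathcal{P}_j(n)$. No new combinatorial construction is required.
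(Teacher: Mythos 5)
Your proposal is correct and is essentially identical to the paper's own proof: the paper likewise observes that $\mathcal{P}_j(n)$ is a union of full $2$-diagonal equivalence classes (since size and $2$-core are determined by the diagonal pattern) and then invokes the classwise statement of Proposition \ref{k=0,1,2}. The extra verification you propose about the word-level bijections staying within a class is already packaged into the statement of Proposition \ref{k=0,1,2}, so no further argument is needed.
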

\begin{proof} This follows from Proposition \ref{k=0,1,2} and the fact that partitions in a $2$-diagonal equivalence class have the same 2-core.
\end{proof}

\begin{theorem}\label{bij1}
The statistics $h_{2,0}$ and $a_2$ are equidistributed over $\mathcal{P}_j(n)$.
\end{theorem}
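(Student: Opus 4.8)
\textbf{Proof plan for Theorem \ref{bij1}.}

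The plan is to exhibit an explicit bijection from $\mathcal{P}_j(n)$ to itself that carries the statistic $h_{2,0}$ to $a_2$, working through the combinatorial model of diagonal patterns and departure words developed in Section \ref{multisum_section}. First I would recall that $h_{2,0}(\mu)$ counts cells $v$ with $l_v = a_v + 1$ subject to the divisibility condition $2 \mid a_v + l_v + 1$; since $(\alpha,\beta) = (2,0)$ forces $l_v = a_v+1$ and then $a_v+l_v+1 = 2a_v+2$ is automatically even, $h_{2,0}(\mu)$ is simply the number of cells of odd hook length whose leg exceeds its arm by one — equivalently, by the same hand/foot argument as in Proposition \ref{hookpro}, an inversion count in an appropriate sequence of departure words, but now associated with the $2$-core/$2$-quotient splitting rather than the $2$-diagonal splitting. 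The key structural input is the identification of $\mathcal{P}_j(n)$ with pairs (fixed $2$-core of size $\binom{2j}{2}$, $2$-quotient $(\nu_0,\nu_1)$ with $|\nu_0|+|\nu_1| = \frac{n - \binom{2j}{2}}{2}$): since the $2$-core is fixed throughout, the whole problem reduces to understanding how $h_{2,0}$ and $a_2$ depend on the $2$-quotient.

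Next I would compute each statistic in terms of the $2$-quotient. For $h_{2,0}$: removing $2$-hooks from $\mu$ corresponds to removing $1$-hooks (single cells) from components of the $2$-quotient, and each such removal step — by tracking what a $2$-hook removal does to the edge sequence — changes $h_{2,0}$ by a controlled amount; iterating down to the $2$-core (where $h_{2,0}$ takes a value depending only on $j$) should give $h_{2,0}(\mu) = h_{2,0}(\text{core}) + (\text{something like } |\nu_0| + |\nu_1|)$, or more precisely an expression in the sizes of the quotient components, since a staircase has every hook length odd and the leg-arm-plus-one cells in a staircase are easy to enumerate. For $a_2$: the largest repeated part of $\mu$ can be read off the edge sequence as the first place (scanning inward) where the pattern forces a repetition, and de-interleaving the edge sequence into the two quotient edge sequences $M_0, M_1$ translates a repeated part of $\mu$ into a condition on the $\nu_i$; with the $2$-shift $(j,-j)$ fixed, $a_2(\mu)$ becomes a function of $a_2(\nu_0), a_2(\nu_1)$ (or their analogues) and $j$. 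Having written both statistics purely in terms of $(\nu_0, \nu_1)$, the equidistribution claim collapses to a statement about pairs of partitions of total size $\frac{n-\binom{2j}{2}}{2}$, which I would settle either by a direct bijection on such pairs or, more cheaply, by showing both generating functions equal the same product — comparing against Corollary \ref{multisumcor} with $m=2$ (the $t=1$ Euler product) refined by the relevant grading.

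The main obstacle I anticipate is pinning down the exact bookkeeping for $h_{2,0}$ under $2$-hook removal: while it is clear that $h_{2,0}$ is well-behaved with respect to the $2$-core structure in broad strokes, getting a clean closed form — rather than a messy sum over removal history — requires choosing the right intermediate model (likely the departure words attached to the de-interleaved edge sequences, so that $h_{2,0}$ literally becomes $\sum_i \operatorname{inv}(\cdot)$ over words built from $\nu_0$ and $\nu_1$ separately) and verifying that the inversion count genuinely splits across the two components with no cross terms. Once that splitting is established, matching it to the combinatorial description of $a_2$ on $2$-quotients is the same kind of argument already carried out in miniature in the proof of Proposition \ref{k=0,1,2} for $r=0,1,2$, just done uniformly in $r$; I expect that step to be routine by comparison. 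A secondary check is ensuring the $2$-core contributes the same constant shift to both $h_{2,0}$ and $a_2$ (it should, since both vanish on a staircase only when the staircase is empty, and otherwise both equal the relevant statistic of the staircase), so that the shift can be ignored and the problem is genuinely about the $2$-quotient alone.
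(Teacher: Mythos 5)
There is a genuine gap, and it starts with the definition of the statistic. You write that $(\alpha,\beta)=(2,0)$ forces $l_v=a_v+1$; but the defining condition is $\alpha l_v=\beta(a_v+1)$, which for $(2,0)$ reads $2l_v=0$, i.e. $l_v=0$, together with $2\mid a_v+l_v+1$, i.e. $a_v$ odd. So $h_{2,0}(\mu)$ counts cells at the bottom of their column with odd arm length — equivalently the number of pairs of equal columns of $\mu$ — and is a very coarse statistic. The condition $l_v=a_v+1$ you describe is $h_{1,1}$, so your plan is in effect an attempt to prove that $h_{1,1}$ and $a_2$ are equidistributed over $\mathcal{P}_j(n)$, which is precisely the paper's open Conjecture \ref{cj1}, not Theorem \ref{bij1}. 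The distinction is not cosmetic: the theorem is provable exactly because $h_{2,0}$ is the easy "repeated columns" statistic.

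Even with the statistic corrected, the route through the $2$-quotient does not go through in general. Neither $a_2(\mu)$ nor $h_{2,0}(\mu)$ is a function of $(\nu_0,\nu_1)$ and $j$ alone: a repeated part of $\mu$ corresponds to two consecutive $0$'s in the edge sequence, which land in different quotient components, and the cells counted by $h_{2,0}$ correspond to portions of the edge sequence that straddle both components. The clean splitting you hope for (each statistic reading off a quotient component with "no cross terms") is exactly the content of Proposition \ref{pp2} and Theorem \ref{bigcore}, and it holds only when the core is large relative to the quotient size; for small cores there are genuine cross terms, which is why the paper states that case as a separate partial result. The paper's actual proof of Theorem \ref{bij1} avoids all of this with an elementary bijection on part multiplicities: peel off one copy of each part of odd multiplicity into a distinct-parts partition $\lambda'$ (this carries the $2$-core, since the even-multiplicity remainder is domino-tileable, and does not change the number of repeated pairs); merge each remaining pair of equal parts $i,i$ into a single part $2i$, transpose the resulting even-part partition to create exactly $h_{2,0}(\lambda)$ repeated parts, and recombine with $\lambda'$. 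If you want to salvage your outline, you should aim it at the generating-function statement of Theorem \ref{genhook20} (strip repeated column pairs to reduce to $2$-restricted partitions with a fixed core, then invoke Proposition \ref{2coredist}), rather than at a componentwise formula over the $2$-quotient.
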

\begin{proof}
Let $k=\max(2j,1-2j)$. Given a partition $\lambda=(m^{b_m},(m-1)^{b_{m-1}},\cdots,2^{b_2},1^{b_1})$ of $n$ with $2$-core $\{k,k-1,\cdots,1\}$ (where $b_i$ is the number of occurences of $i$ in $\lambda$), we will construct a partition $\lambda'$ of $n$ with the same $2$-core as $\lambda$ so that $a_2(\lambda')=h_{2,0}(\lambda)$.

For every odd $b_i$, we move one part of size $i$ to the partition $\lambda'$. This will not change the value of $h_{2,0}(\lambda)$. The remaining parts of $\lambda$ all have even multiplicity, so the partition can be tiled by dominos and therefore has empty $2$-core. $\lambda'$ will therefore have the same $2$-core as $\lambda$, as it can be obtained by removing dominos from the original $\lambda$.

We then combine every pair of repeated parts of size $j$ in $\lambda$ into a single part of size $2j$, giving us exactly $h_{2,0}(\lambda)$ parts of even length. Taking the transpose of this gives us a partition $\mu$ tileable by dominos with $a_2(\mu)=h_{2,0}(\lambda)$. Adding these parts into $\lambda'$ will not change the $2$-core, so this will give us the desired partition.

This process is fully reversible. Given a partition $\lambda$ with $a_2(\lambda)=i$, we again move into a partition $\lambda'$ with distinct parts leaving behind only even multiplicities. Then if we take the transpose, we will have a partition with $i$ even parts. Splitting each part in half gives a partition $\mu$ with $h_{2,0}(\mu)=i$, and adding this to $\lambda'$ gives the desired inverse. So we have a bijection between partitions $\lambda$ of $n$ with $a_2(\lambda)=p$ and partitions with $p$ cells in $H_{2,0}$ that preserves $2$-core. This proves the theorem.
\end{proof}

\begin{figure}[H]
\begin{center}
\begin{picture}(10,12)
\linethickness{1.5pt}

\multiput(-8,0)(1,0){1}{\line(1,0){1}}
\multiput(-8,0)(1,0){2}{\line(0,1){1}}
\multiput(-8,1)(1,0){1}{\line(1,0){1}}
\multiput(-8,1)(1,0){2}{\line(0,1){1}}
\multiput(-8,2)(1,0){1}{\line(1,0){1}}
\multiput(-8,2)(1,0){2}{\line(0,1){1}}
\multiput(-8,3)(1,0){1}{\line(1,0){1}}
\multiput(-8,3)(1,0){2}{\line(0,1){1}}
\multiput(-8,4)(1,0){2}{\line(1,0){1}}
\multiput(-8,4)(1,0){3}{\line(0,1){1}}
\multiput(-8,5)(1,0){2}{\line(1,0){1}}
\multiput(-8,5)(1,0){3}{\line(0,1){1}}
\multiput(-8,6)(1,0){2}{\line(1,0){1}}
\multiput(-8,6)(1,0){3}{\line(0,1){1}}
\multiput(-8,7)(1,0){3}{\line(1,0){1}}
\multiput(-8,7)(1,0){4}{\line(0,1){1}}
\multiput(-8,8)(1,0){3}{\line(1,0){1}}
\multiput(-8,8)(1,0){4}{\line(0,1){1}}
\multiput(-8,9)(1,0){3}{\line(1,0){1}}
\multiput(-8,9)(1,0){4}{\line(0,1){1}}
\multiput(-8,10)(1,0){4}{\line(1,0){1}}
\multiput(-8,10)(1,0){5}{\line(0,1){1}}
\multiput(-8,11)(1,0){4}{\line(1,0){1}}

\multiput(-1,7)(1,0){2}{\line(1,0){1}}
\multiput(-1,7)(1,0){3}{\line(0,1){1}}
\multiput(-1,8)(1,0){2}{\line(1,0){1}}
\multiput(-1,8)(1,0){3}{\line(0,1){1}}
\multiput(-1,9)(1,0){4}{\line(1,0){1}}
\multiput(-1,9)(1,0){5}{\line(0,1){1}}
\multiput(-1,10)(1,0){6}{\line(1,0){1}}
\multiput(-1,10)(1,0){7}{\line(0,1){1}}
\multiput(-1,11)(1,0){6}{\line(1,0){1}}

\multiput(-1,0)(1,0){2}{\line(1,0){1}}
\multiput(-1,0)(1,0){3}{\line(0,1){1}}
\multiput(-1,1)(1,0){3}{\line(1,0){1}}
\multiput(-1,1)(1,0){4}{\line(0,1){1}}
\multiput(-1,2)(1,0){4}{\line(1,0){1}}
\multiput(-1,2)(1,0){5}{\line(0,1){1}}
\multiput(-1,3)(1,0){4}{\line(1,0){1}}

\multiput(8,5)(1,0){1}{\line(1,0){1}}
\multiput(8,5)(1,0){2}{\line(0,1){1}}
\multiput(8,6)(1,0){1}{\line(1,0){1}}
\multiput(8,6)(1,0){2}{\line(0,1){1}}
\multiput(8,7)(1,0){2}{\line(1,0){1}}
\multiput(8,7)(1,0){3}{\line(0,1){1}}
\multiput(8,8)(1,0){2}{\line(1,0){1}}
\multiput(8,8)(1,0){3}{\line(0,1){1}}
\multiput(8,9)(1,0){4}{\line(1,0){1}}
\multiput(8,9)(1,0){5}{\line(0,1){1}}
\multiput(8,10)(1,0){4}{\line(1,0){1}}
\multiput(8,10)(1,0){5}{\line(0,1){1}}
\multiput(8,11)(1,0){4}{\line(1,0){1}}

\multiput(15,2)(1,0){1}{\line(1,0){1}}
\multiput(15,2)(1,0){2}{\line(0,1){1}}
\multiput(15,3)(1,0){1}{\line(1,0){1}}
\multiput(15,3)(1,0){2}{\line(0,1){1}}
\multiput(15,4)(1,0){2}{\line(1,0){1}}
\multiput(15,4)(1,0){3}{\line(0,1){1}}
\multiput(15,5)(1,0){2}{\line(1,0){1}}
\multiput(15,5)(1,0){3}{\line(0,1){1}}
\multiput(15,6)(1,0){2}{\line(1,0){1}}
\multiput(15,6)(1,0){3}{\line(0,1){1}}
\multiput(15,7)(1,0){3}{\line(1,0){1}}
\multiput(15,7)(1,0){4}{\line(0,1){1}}
\multiput(15,8)(1,0){4}{\line(1,0){1}}
\multiput(15,8)(1,0){5}{\line(0,1){1}}
\multiput(15,9)(1,0){4}{\line(1,0){1}}
\multiput(15,9)(1,0){5}{\line(0,1){1}}
\multiput(15,10)(1,0){4}{\line(1,0){1}}
\multiput(15,10)(1,0){5}{\line(0,1){1}}
\multiput(15,11)(1,0){4}{\line(1,0){1}}

\color{red}{
\put(-4, 6){\vector(3,2){1.5}}
\put(-4, 3.5){\vector(3,-2){1.5}}
\put(5, 8){\vector(1,0){1.5}}
\put(12.5, 8){\vector(1,0){1.5}}
\put(8, 2){\vector(1,0){1.5}}
}

\end{picture}
\end{center}
\caption{The bijection of Theorem \ref{bij1}}.\label{graph1}
\end{figure}
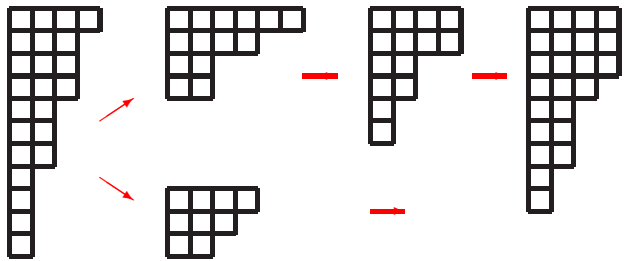

We now show that the statistics $h_{2,0}$ and $h_{1,1}$ are equidistributed over $\overline{\mathcal{P}_j}(n)$ for sufficiently large $k=\max(2j,1-2j)$. The following statement is a special case of Theorem \ref{bigcore}, which will be proved later.
\begin{proposition}\label{pp2}
Given any partition $\lambda$ with $2$-core $\{k,k-1,\cdots 1\}$ and $2$-quotient $(\lambda_0,\lambda_1)$ such that $k \geq |\lambda_0|+|\lambda_1|$, we have
\begin{enumerate}
\item if $2\mid k$, then $h_{1,1}(\lambda)=a(\lambda_1)$ and $h_{2,0}=a(\lambda_0)$.
\item if $2\nmid k$, then $h_{1,1}(\lambda)=a(\lambda_0)$ and $h_{2,0}=a(\lambda_1)$.
\end{enumerate}
This implies that $h_{1,1}$ and $h_{2,0}$ are equidistributed over $\overline{\mathcal{P}_j}(n)$ with $\max(2j,1-2j) \geq n$.
\end{proposition}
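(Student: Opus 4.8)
The plan is to compute $h_{1,1}(\lambda)$ and $h_{2,0}(\lambda)$ from the $2$-quotient through the edge sequence $M$ of $\lambda$, using the bound $k\ge|\lambda_0|+|\lambda_1|$ to decouple the contributions of $\lambda_0$ and $\lambda_1$. The first observation is that the cells counted by $h_{1,1}$ or $h_{2,0}$ all have even hook length: a cell with $l_v=a_v+1$ has hook length $2a_v+2$, and a cell with $l_v=0$ and $a_v$ odd has hook length $a_v+1$. Hence all such cells sit among the cells of $\lambda$ of hook length divisible by $2$, which by the standard correspondence underlying the $2$-quotient (a cell of $\lambda$ being a pair $a<b$ of positions of $M$ with $M(a)=1$, $M(b)=0$ and hook length $b-a$) are in bijection with the cells of $\lambda_0$ and $\lambda_1$, hook lengths halved. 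If $v=(a,b)$ corresponds to a cell $w$ of $\lambda_i$, then the positions strictly between $a$ and $b$ of the same parity as $a,b$ contribute exactly $\operatorname{arm}_{\lambda_i}(w)$ ones and $\operatorname{leg}_{\lambda_i}(w)$ zeros, while the $\operatorname{hook}_{\lambda_i}(w)=(b-a)/2$ positions of the opposite parity between $a$ and $b$ each carry some common value $c\in\{0,1\}$; so $\operatorname{arm}_\lambda(v)=\operatorname{arm}_{\lambda_i}(w)+c\operatorname{hook}_{\lambda_i}(w)$ and $\operatorname{leg}_\lambda(v)=\operatorname{leg}_{\lambda_i}(w)+(1-c)\operatorname{hook}_{\lambda_i}(w)$.

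The heart of the argument is to show that, under $k\ge|\lambda_0|+|\lambda_1|$, the value $c$ is constant: one of $\lambda_0,\lambda_1$ has $c=0$ on all of its cells and the other has $c=1$. The even-indexed subsequence of $M$ is a shifted copy of the edge sequence of $\lambda_0$ and the odd-indexed subsequence a shifted copy of that of $\lambda_1$, where the shifts are fixed by the $2$-shift of the core $\{k,k-1,\dots,1\}$ via Proposition \ref{mshift}; this confines the part of $M$ on which each subsequence differs from the empty-partition pattern to a window of width at most $|\lambda_i|$, located near position $-k$ or near position $+k$ in an order determined by the parity of $k$. The estimate $a(\mu)+l(\mu)\le|\mu|+1$ together with $k\ge|\lambda_0|+|\lambda_1|$ makes these two windows disjoint and, crucially, forces the opposite-parity entries of $M$ to be identically $0$ throughout one window and identically $1$ throughout the other. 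Getting these inequalities exactly right is the step I expect to be the main obstacle; everything else is formal.

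Granting this, for the component with $c=0$ we get $\operatorname{leg}_\lambda(v)-\operatorname{arm}_\lambda(v)=\operatorname{leg}_{\lambda_i}(w)-\operatorname{arm}_{\lambda_i}(w)+\operatorname{hook}_{\lambda_i}(w)=2\operatorname{leg}_{\lambda_i}(w)+1$, which equals $1$ iff $\operatorname{leg}_{\lambda_i}(w)=0$, and there $\operatorname{leg}_\lambda(v)\ge\operatorname{hook}_{\lambda_i}(w)\ge1$, so no such cell is counted by $h_{2,0}$; for the component with $c=1$ we get $\operatorname{leg}_\lambda(v)-\operatorname{arm}_\lambda(v)=-2\operatorname{arm}_{\lambda_i}(w)-1\le-1$, so no such cell is counted by $h_{1,1}$, while $\operatorname{leg}_\lambda(v)=0$ iff $\operatorname{leg}_{\lambda_i}(w)=0$, and then $\operatorname{arm}_\lambda(v)=2\operatorname{arm}_{\lambda_i}(w)+1$ is automatically odd. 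Hence $h_{1,1}(\lambda)$ is the number of cells of leg length $0$ in the $c=0$ component and $h_{2,0}(\lambda)$ the number of such cells in the $c=1$ component. Since a partition $\mu$ has exactly $a(\mu)$ cells of leg length $0$ (one at the foot of each of its $a(\mu)$ columns), this gives $\{h_{1,1}(\lambda),h_{2,0}(\lambda)\}=\{a(\lambda_0),a(\lambda_1)\}$, with the assignment pinned down by the parity of $k$ exactly as in (1) and (2). Finally, $\lambda\mapsto(\text{core},(\lambda_0,\lambda_1))$ identifies $\overline{\mathcal P_j}(n)$ with the pairs $(\lambda_0,\lambda_1)$ of total size $n$; the hypothesis $\max(2j,1-2j)\ge n$ says precisely that (1) and (2) hold for every such $\lambda$, and the involution interchanging $\lambda_0$ and $\lambda_1$ swaps $h_{1,1}$ and $h_{2,0}$, which gives the claimed equidistribution over $\overline{\mathcal P_j}(n)$.
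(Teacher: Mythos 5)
Your argument is correct and is essentially the paper's: the paper proves this proposition as the $m=2$ special case of Theorem \ref{bigcore}, whose proof likewise passes to the edge sequence, uses the bound $\max\{p_1,p_2\}\le|\lambda|$ on the essential portion of each quotient subsequence to separate the two windows when the core is large, and identifies the cells of $H_{l,m-l}(\lambda)$ with the leg-zero cells of the appropriate quotient component. The step you flag as the ``main obstacle'' --- that the separation hypothesis forces the opposite-parity entries of $M$ to be identically $0$ across one window and identically $1$ across the other --- is precisely what the case analysis ($t<l$, $t>l$, $t=l$) in the paper's proof of Theorem \ref{bigcore} carries out, so your outline needs no change in approach.
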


We now give two proofs of the fact that $h_{2,0}$ is identically distributed over $\overline{\mathcal{P}}_j(n)$ as $j$ varies over $\mathbb{Z}$.

\begin{proposition}\label{2coredist}
Let $\mathcal{P}_{j}$ be the set of all partitions with $2$-core size ${2j\choose 2}$. We then have the following generating function for partitions with distinct parts in $\mathcal{P}_j$ (or equivalently $2$-restricted partitions, by replacing $\lambda$ by its conjugate).
\begin{align*}
\sum_{\lambda\in \mathcal{P}_{j}\cap \mathcal{D}}q^{|\lambda|}=\frac{q^{2j\choose 2}}{\prod_{i\geq 1}(1-q^{2i})},
\end{align*}
where $\mathcal{D}$ is the set of partitions with distinct parts. This implies that the number of such partitions depends only on the size of the 2-quotient.
\end{proposition}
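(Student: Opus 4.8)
The plan is to enumerate $\mathcal D\cap\mathcal P_j$ via the classical ``staircase plus arbitrary partition'' decomposition of a distinct‑part partition, rewrite the resulting generating function in terms of the number of odd parts, and collapse it with a Durfee‑rectangle identity. Throughout, write $(x;x)_n=\prod_{i=1}^n(1-x^i)$ (so $(x;x)_0=1$, and $(x;x)_n=0$ for $n<0$ by convention), so that $(q^2;q^2)_n=\prod_{i=1}^n(1-q^{2i})$; write $\delta_l=\{l,l-1,\dots,1\}$ for the staircase with $l$ parts; and let $o(\mu)$ be the number of odd parts of $\mu$.

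First I would assemble two ingredients. (i) Each $\lambda\in\mathcal D$ with exactly $l$ parts is uniquely $\lambda=\delta_l+\mu$ for a partition $\mu$ with at most $l$ parts, and then $|\lambda|=\binom{l+1}{2}+|\mu|$. To read off the $2$‑core I would invoke Proposition~\ref{mshift} with $m=2$: the $2$‑shift of $\lambda$ is $(N_0-N_1,\,N_1-N_0)$, where $N_\varepsilon$ is the number of cells of $\lambda$ whose $(-1,1)$‑label is $\equiv\varepsilon\pmod 2$, and $N_0-N_1$ is additive over cells. The cells of $\delta_l$ contribute $-l/2$ if $l$ is even and $(l+1)/2$ if $l$ is odd; the cells of $\lambda/\delta_l$ in row $i$ occupy $\mu_i$ consecutive $(-1,1)$‑diagonals starting on one of parity $\equiv l$, hence contribute $\pm 1$ or $0$ to $N_0-N_1$ according to the parity of $\mu_i$. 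A short count then gives
\[
N_0(\lambda)-N_1(\lambda)=
\begin{cases}-l/2+o(\mu),& l\ \text{even},\\[3pt](l+1)/2-o(\mu),& l\ \text{odd}.\end{cases}
\]
Since $n\mapsto\binom{2n}{2}=2n^2-n$ is injective on $\mathbb Z$, the partition $\lambda$ lies in $\mathcal P_j$ precisely when this quantity equals $j$, i.e.\ when $o(\mu)=a+j$ for $l=2a$ and $o(\mu)=a+1-j$ for $l=2a+1$. (ii) Splitting a partition into its odd and its even parts shows that
\[
\sum_{\substack{\mu:\ l(\mu)\le l,\\ o(\mu)=r}}q^{|\mu|}=\frac{q^{r}}{(q^2;q^2)_r\,(q^2;q^2)_{l-r}},
\]
because an odd‑part partition with $r$ parts has generating function $q^r/(q^2;q^2)_r$ and an even‑part partition with at most $l-r$ parts has generating function $1/(q^2;q^2)_{l-r}$.

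Combining (i) and (ii), and writing $u=a$ for the even $l=2a$ terms and $u=a+1$ for the odd $l=2a+1$ terms, both sums run over $u\ge|j|$ (when $j\le0$ the odd‑$l$ sum starts one step later, but the extra lowest term of the even‑$l$ sum exactly supplies the missing term below), so
\[
\sum_{\lambda\in\mathcal D\cap\mathcal P_j}q^{|\lambda|}
=\sum_{u\ge|j|}\left(\frac{q^{2u^2+2u+j}}{(q^2;q^2)_{u-j}(q^2;q^2)_{u+j}}
+\frac{q^{2u^2-j}\bigl(1-q^{2(u+j)}\bigr)}{(q^2;q^2)_{u-j}(q^2;q^2)_{u+j}}\right),
\]
where the second term comes from the odd‑$l$ contribution after $(q^2;q^2)_{u+j-1}\bigl(1-q^{2(u+j)}\bigr)=(q^2;q^2)_{u+j}$. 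The identity $2u^2+2u+j=(2u^2-j)+2(u+j)$ makes the two numerators telescope to $q^{2u^2-j}$, so the right‑hand side is $\sum_{u\ge|j|}q^{2u^2-j}/\bigl((q^2;q^2)_{u-j}(q^2;q^2)_{u+j}\bigr)$. Setting $r=u-|j|\ge0$ one has $\{u-j,u+j\}=\{r,\,r+2|j|\}$ and $2u^2-j=\binom{2j}{2}+2\,r(r+2|j|)$, so this becomes
\[
q^{\binom{2j}{2}}\sum_{r\ge0}\frac{\bigl(q^{2}\bigr)^{\,r(r+2|j|)}}{(q^2;q^2)_r\,(q^2;q^2)_{r+2|j|}}.
\]
By the Durfee‑rectangle identity $\sum_{r\ge0}x^{r(r+d)}/\bigl((x;x)_r(x;x)_{r+d}\bigr)=1/(x;x)_\infty$, valid for every integer $d\ge0$, applied with $x=q^2$ and $d=2|j|$, the sum equals $1/\prod_{i\ge1}(1-q^{2i})$, giving $\sum_{\lambda\in\mathcal D\cap\mathcal P_j}q^{|\lambda|}=q^{\binom{2j}{2}}/\prod_{i\ge1}(1-q^{2i})$; the $2$‑restricted version follows at once since conjugation preserves $|\lambda|$ and the $2$‑core.

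The substantive step is (i): once one knows the $2$‑core of $\delta_l+\mu$ is controlled entirely by $l$ and $o(\mu)$, the remainder is bookkeeping, the one thing to watch being the alignment of summation ranges (in particular the lone unmatched boundary term when $j\le0$) so that the collapsed series lands exactly in Durfee‑rectangle shape. A more combinatorial alternative would be to describe precisely which pairs $(\lambda_0,\lambda_1)$ of $2$‑quotients occur for $\lambda\in\mathcal D$ with a prescribed $2$‑core and biject them with arbitrary partitions of the corresponding total size; this is appealing, but the class of admissible pairs seems harder to pin down cleanly than the computation above.
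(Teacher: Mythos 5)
Your proof is correct, and it takes a genuinely different route from either of the paper's two proofs. The paper's first proof is a chain of bijections: it transports $\mathcal D\cap\mathcal P_j$ to the set of all partitions with $e(\lambda)-o(\lambda)\in\{2j,1-2j\}$ via the diagonal-pattern map $\phi$ and the alternating-sum lemma, splits into odd and even parts, and finishes with an explicit maximal-rectangle bijection onto $\mathcal P(n)$; its second proof is an induction on the largest part via Pascal-type recurrences for $q^2$-binomial coefficients. You instead work directly with the staircase decomposition $\lambda=\delta_l+\mu$, read off the $2$-shift from Proposition~\ref{mshift} by additivity of $N_0-N_1$ over cells (your formulas $-l/2+o(\mu)$ and $(l+1)/2-o(\mu)$ check out, as does the reduction of ``$2$-core size $\binom{2j}{2}$'' to ``$N_0-N_1=j$'' via injectivity of $j\mapsto 2j^2-j$), and then do the bookkeeping as a $q$-series computation, with the Durfee-rectangle identity playing the role that the explicit rectangle bijection plays in the paper. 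The telescoping $q^{2u^2+2u+j}+q^{2u^2-j}(1-q^{2(u+j)})=q^{2u^2-j}$ and the range alignment (the $u=-j$ odd-$l$ term vanishes because of the factor $1-q^{2(u+j)}$, so extending that sum down to $u=|j|$ is harmless) are both right, and the substitution $r=u-|j|$ correctly produces $\binom{2j}{2}+2r(r+2|j|)$ in the exponent. What your approach buys is a shorter, more mechanical verification that leans on a standard identity rather than on constructing and inverting bijections; what it gives up is the explicit bijective content of the paper's first proof (which the authors reuse in spirit elsewhere). Two cosmetic points: your convention should read $1/(x;x)_n=0$ for $n<0$ rather than $(x;x)_n=0$ (though no negative index actually occurs in your final sum), and it would be worth adding the one-line observation that the coefficient of $q^n$ in $q^{\binom{2j}{2}}/\prod_{i\ge1}(1-q^{2i})$ is $p\bigl((n-\binom{2j}{2})/2\bigr)$, i.e.\ depends only on the $2$-quotient size, since that final claim is part of the statement.
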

\begin{proof}[First proof] For each partition $\lambda$, let $\phi(\lambda)$ be the unique partition with distinct parts with that has $2$-diagonal pattern $\{1, 2, ..., \lambda^T_1, \lambda^T_2, ..., \lambda^T_k\}$. We know that $\phi : \mathcal{P} \mapsto \mathcal{D}$ is bijective, $|\phi(\lambda)| = \binom{l(\lambda)}{2} + |\lambda|$, and that the $2$-core size of $\phi(\lambda)$ is $\binom{2j}{2}$ for $j = (-1)^{l(\lambda)}\lfloor \frac{e(\lambda)-o(\lambda)}{2} \rfloor$. Thus the left side of the desired identity is equal to
\begin{align*}
\sum_{e(\lambda) - o(\lambda) = 2j, 1- 2j} q^{\binom{l(\lambda)}{2} + |\lambda|}.
\end{align*}
Dividing both sides by $q^{\binom{2j}{2}}$ and then replacing $q$ by $q^{1/2}$, we see that the desired identity is equivalent to
\begin{align*}
\sum_{e(\lambda) - o(\lambda) = 2j, 1- 2j} q^{\frac{|\lambda|+o(\lambda)(2e(\lambda)-1)}{2}} = \prod_{i\geq 1} \frac{1}{(1-q^i)}.
\end{align*}
Since the right side is the generating function for all partitions, it suffices to show that $\mathcal{P}(n)$, the set of partitions of $n$, is equinumerous with the set
\begin{align*}
A_{n, j} = \{ \lambda \text{ } \Big{|} \text{ } |\lambda|+o(\lambda)(2e(\lambda)-1) = 2n, e(\lambda) - o(\lambda) = 2j, 1- 2j\}.
\end{align*}

Given any partition $\lambda$, let $\alpha$ be the partition obtained by taking all odd parts of $\lambda$, adding $1$ to each part and then dividing each part by 2, and let $\beta$ be the partition obtained by taking all even parts of $\lambda$ and dividing each part by 2. Then $o(\lambda) = l(\alpha), e(\lambda) = l(\beta)$ and $|\lambda| = 2|\alpha| - l(\alpha) + 2|\beta|$, so $|\lambda|+o(\lambda)(2e(\lambda)-1) = 2(|\alpha|+|\beta|+l(\alpha)(l(\beta)-1))$. This gives a bijection between $A_{n, j}$ and the set of partition pairs
\begin{align*}
B_{n, j} = \{(\alpha, \beta) \text{ } \Big{|} \text{ } |\alpha|+|\beta|+l(\alpha)(l(\beta)-1) = n, l(\alpha) - l(\beta) = 2j, 1-2j\}
\end{align*}
To finish, we give a bijection between $\mathcal{P}(n)$ and $B_{n, j}$. Let $x = \max(2j, 1-2j)$. Given $\mu \in \mathcal{P}(n)$, choose the largest $k$ such that the rectangle of dimension $k \times (k+x-1)$ is contained in $\mu$. There are two types of partitions $\mu$ in $\mathcal{P}(n)$:

\begin{enumerate}[(a)]
\item\label{item1} $\mu$ not containing the rectangle of dimension $k \times (k+x)$, and
\item\label{item2} $\mu$ containing the rectangle of dimension $k \times (k+x)$.
\end{enumerate}

There are also two types of partition pairs in $B_{n, j}$:

\begin{enumerate}[(A)]
\item\label{itema} those that satisfy $l(\beta) - l(\alpha) = x$, and
\item\label{itemb} those that satisfy $l(\beta) - l(\alpha) = 1-x$.
\end{enumerate}

If $\mu \in \mathcal{P}(n)$ is of type \ref{item1}, let $\alpha$ be the portion of $\mu$ above row $k$ (exclusive) and to the right of column $k+x-1$ (inclusive), and $\beta$ be the conjugate of the portion of $\mu$ below row $k$ (inclusive). Then $(\alpha, \beta) \in B_{n, j}$ and is of type \ref{itema}. This construction is clearly reversible.

If $\mu \in \mathcal{P}(n)$ is of type \ref{item2}, let $\alpha$ be the conjugate of the portion of $\mu$ below row $k$ (inclusive) and to the left of column $k+x$ (exclusive), and $\beta$ be the portion of $\mu$ to the right of $k+x$ (inclusive). Then $(\alpha, \beta) \in B_{n, j}$ and is of type \ref{itemb}. Again, the construction is reversible.
\end{proof}
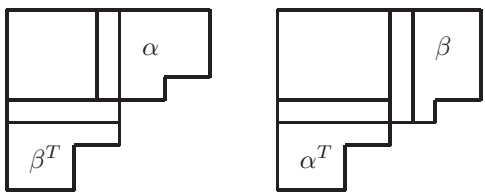
\begin{figure}
\begin{center}
\begin{picture}(15,10)
\linethickness{1pt}

\put(-2,0){\line(1,0){3}}
\put(-2,0){\line(0,1){3}}
\put(1,0){\line(0,1){2}}
\put(1,2){\line(1,0){2}}
\put(3,2){\line(0,1){1}}

\put(-2,3){\line(1,0){5}}
\put(-2,8){\line(1,0){5}}
\put(-2,3){\line(0,1){5}}
\put(3,3){\line(0,1){5}}

\put(-2,4){\line(1,0){5}}
\put(2,4){\line(0,1){4}}

\put(3,4){\line(1,0){2}}
\put(5,4){\line(0,1){1}}
\put(5,5){\line(1,0){2}}
\put(7,5){\line(0,1){3}}
\put(3,8){\line(1,0){4}}

\put(4,6){$\alpha$}
\put(-1,1){$\beta^T$}

\put(10,0){\line(1,0){3}}
\put(10,0){\line(0,1){3}}
\put(13,0){\line(0,1){2}}
\put(13,2){\line(1,0){2}}
\put(15,2){\line(0,1){1}}

\put(10,3){\line(1,0){5}}
\put(10,8){\line(1,0){5}}
\put(10,3){\line(0,1){5}}
\put(15,3){\line(0,1){5}}

\put(10,4){\line(1,0){5}}
\put(16,3){\line(0,1){5}}

\put(15,3){\line(1,0){2}}
\put(17,3){\line(0,1){1}}
\put(17,4){\line(1,0){2}}
\put(19,4){\line(0,1){4}}
\put(15,8){\line(1,0){4}}

\put(17,6){$\beta$}
\put(11,1){$\alpha^T$}

\end{picture}
\end{center}
\caption{The bijection between $\mathcal{P}(n)$ and $B_{n, j}$ of the first proof: type \ref{item1} is on the left and type \ref{item2} is on the right.}
\end{figure}

We give another proof of Proposition \ref{2coredist}.

\begin{proof}[Second proof]
Let $f_{a,b}(q)$ be the generating function for the number of partitions of $n$ with distinct parts, $2$-core $\{a,a-1,...1\}$ and largest part at most $b$. Note that for $f_{a,b}(q)$ to be nonzero, we must have $b>a$. Then we have the following lemma.

\begin{lemma}
\label{lgpt}$$f_{a,b}(q)=\left({b \atop \lfloor \frac{b-a}{2} \rfloor}\right)_{q^2}q^{a\choose{2}}$$

\end{lemma}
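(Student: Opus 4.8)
The plan is to prove the identity by induction on $b$, matching a combinatorial recursion for $f_{a,b}(q)$ against the two Pascal-type recurrences for the Gaussian binomial coefficient $\left[{b\atop m}\right]_{q^2}$.

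First I would set up the recursion. A partition $\lambda$ counted by $f_{a,b}(q)$ either has largest part strictly less than $b$ — then it is already counted by $f_{a,b-1}(q)$ — or has largest part exactly $b$. In the latter case deleting the first row yields $\lambda'$, still with distinct parts and now largest part at most $b-1$; since $|\lambda|=|\lambda'|+b$ and distinctness is visibly preserved both ways, $\lambda\mapsto\lambda'$ is a bijection onto the partitions with distinct parts and largest part at most $b-1$ whose $2$-core is $\{a',a'-1,\dots,1\}$, for a value $a'$ that I must relate to $a$. The two cases are disjoint, so the recursion reads $f_{a,b}(q)=f_{a,b-1}(q)+q^{b}f_{a',b-1}(q)$.

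Next I would pin down $a'$ in terms of $a$. Prepending a row of length $b$ sends each old cell of $(-1,1)$-label $\ell$ to a cell of label $\ell-1$, and adds $\lceil b/2\rceil$ cells of even label and $\lfloor b/2\rfloor$ of odd label. Writing $N_0,N_1$ for the numbers of cells of even, odd $(-1,1)$-label, this gives $N_0(\lambda)-N_1(\lambda)=-(N_0(\lambda')-N_1(\lambda'))+\chi(b\text{ odd})$, so by Proposition \ref{mshift} the $2$-shift satisfies $j(\lambda)=-j(\lambda')+\chi(b\text{ odd})$. Since the $2$-core $\{k,\dots,1\}$ corresponds to $k=\max(-2j,2j-1)$, this translates into: when $b$ is odd, $a$ and $a'$ are swapped inside the pairs $\{2i,2i+1\}$; when $b$ is even, they are swapped inside the pairs $\{2i-1,2i\}$ while $a=a'=0$ is fixed. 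Either way $a\mapsto a'$ is an involution of $\mathbb{Z}_{\ge0}$, which is exactly what makes $\lambda\mapsto\lambda'$ bijective onto the full target set.

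Finally I would carry out the induction step. Substituting the claimed product formula into $f_{a,b}(q)=f_{a,b-1}(q)+q^{b}f_{a',b-1}(q)$ and dividing by the relevant power of $q$, the identity to verify becomes — with $x=q^2$ and $m=\lfloor(b-a)/2\rfloor$ — one of the standard $q$-Pascal recurrences $\left[{b\atop m}\right]_x=\left[{b-1\atop m}\right]_x+x^{\,b-m}\left[{b-1\atop m-1}\right]_x$ or $\left[{b\atop m}\right]_x=\left[{b-1\atop m-1}\right]_x+x^{\,m}\left[{b-1\atop m}\right]_x$, the choice depending on the parities of $a$ and $b$; in the exceptional case $b$ even, $a=0$ the two summands coincide after using $\left[{b-1\atop m}\right]_x=\left[{b-1\atop b-1-m}\right]_x$, combining to $(1+x^{b/2})\left[{b-1\atop b/2-1}\right]_x$. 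The base case is $b=0$ with $f_{0,0}(q)=1$; the convention that a Gaussian binomial with negative lower index vanishes handles all $b<a$, and the remaining normalization (the pure power of $q$) is read off from $b=a$, where the only admissible partition is the staircase $(a,a-1,\dots,1)$. The genuine work is the bookkeeping of these last two paragraphs — keeping straight how $a$, $a'$, $\lfloor(b-a)/2\rfloor$, and the two parities conspire so that the recursion lands on precisely the correct form of the $q$-Pascal identity in each case — while the recursion itself and the $2$-shift computation via Proposition \ref{mshift} are routine. (A more conceptual alternative is to use the $2$-quotient: among partitions with $2$-core $\{a,\dots,1\}$ one shows that distinctness of parts is equivalent to the $(a\bmod 2)$-component of the $2$-quotient being empty, with the other component then free to range over all partitions in a box of shape $\bigl(b-\lfloor\tfrac{b-a}{2}\rfloor\bigr)\times\lfloor\tfrac{b-a}{2}\rfloor$, which produces the Gaussian binomial in $q^2$ directly; but establishing that equivalence on the $2$-abacus is itself the crux there.)
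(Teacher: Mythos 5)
Your proof follows essentially the same route as the paper's: induction on $b$ via the recurrence obtained by conditioning on whether $b$ occurs as a part, followed by the two $q$-Pascal identities for $\left({b \atop \lfloor (b-a)/2 \rfloor}\right)_{q^2}$. Your case analysis is, however, more careful than the paper's in one genuine respect: deriving the core-change rule from the $2$-shift via Proposition \ref{mshift} correctly shows that when $a=0$ and $b$ is even the removed part of size $b$ (a row tileable by horizontal dominoes) leaves the $2$-core empty, so the recurrence is $f_{0,b}=(1+q^b)f_{0,b-1}$, whereas the paper's recurrence (2) as written would give $f_{0,b}=f_{0,b-1}+q^b f_{1,b-1}$, which already fails at $b=2$ (it yields $1+q^3$ while $f_{0,2}=1+q^2$, counting $\emptyset$ and $(2)$); your exceptional sub-case, resolved by the symmetry $\left({b-1 \atop b/2}\right)_{q^2}=\left({b-1 \atop b/2-1}\right)_{q^2}$, is exactly what is needed there. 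Note also that reading the normalization off the staircase base case $b=a$ gives the exponent $a(a+1)/2$, the size of the $2$-core, which is consistent with the powers of $q$ appearing in the paper's own inductive computation; the exponent ${a \choose 2}$ in the statement of the lemma appears to be a typo.
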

\begin{proof}
We prove this using induction on $b$. For the base case, $f_{0,0}=1$ and $f_{a,0}=0$ for $a\neq0$ as the only partition with largest part $0$ is the empty partition and it has empty $2$-core. For the inductive step, we prove the following recurrences for $f_{a,b}(q)$:
\begin{enumerate}
\item $f_{a,b}(q)=f_{a,b-1}+q^bf_{a-1,b-1}$ when $a>0,a\equiv b\mod2$
\item $f_{a,b}(q)=f_{a,b-1}+q^bf_{a+1,b-1}$ otherwise.
\end{enumerate}

Any partition with distinct parts, $2$-core $\{a,a-1,\cdots,1\}$ and largest part at most $b$ will either have a part of size $b$ or it will not. If it doesn't have such a part, then it will contribute to the generating function $f_{a,b-1}$. This gives the first term in each identity. If it does, then we can remove that part to get a partition of size $n-b$ with largest part at most $b-1$. Removing this part will remove $a$ from the $2$-core in the first case and add $a+1$ to the $2$-core in the second. This gives the second term in each recurrence.

By induction, we assume that the lemma holds for $f_{a',b-1}$ for all $a'$ and apply it to the right side of the identities. For identity (1), we get $$f_{a,b}(q)=\left({b-1 \atop \frac{b-a}{2}-1}\right)_{q^2}q^{\frac{a^2+a}{2}}+\left({b-1 \atop  \frac{b-a}{2}}\right)_{q^2}q^{\frac{a^2-a}{2}+b}.$$ Factoring out $q^{a\choose 2}$ and applying the identity $$\left({x \atop y}\right)_{q^2}=\left({x-1 \atop y-1}\right)_{q^2}+\left({x -1\atop y}\right)_{q^2}q^{2y}$$ gives the lemma when $a>0,a\equiv b\mod 2$.

The other case follows similarly using the identity $$\left({x \atop y}\right)_{q^2}=\left({x-1 \atop y}\right)_{q^2}+\left({x -1\atop y-1}\right)_{q^2}q^{2(x-y)}.$$
\end{proof}

When $b\geq n$, the generating function $f_{a,b}(q)$ will count all partitions of $n$ with distinct parts and $2$-core of height $a$. So the number of such partitions will be the coefficient of $q^{n-\frac{a^2+a}{2}}$ in $$\left(3n \atop \lfloor \frac{3n-a}{2} \rfloor \right)_{q^2}.$$ This is counted by the number of paths from the origin to $(\frac{3n-a}{2},\frac{3n+a}{2})$ with area $\frac{n-\frac{a^2+a}{2}}{2}$. These are just Young diagrams of the partitions of $\frac{n-{a\choose{2}}}{2}$. This is exactly the number counted by the generating function given in the proposition.
\end{proof}

Using the above proposition, we can obtain the following generating function.

\begin{theorem}\label{genhook20}
\begin{align}
\label{ge1}\sum_{\mu\in \mathcal{P}_j}t^{h_{2,0}(\mu)}q^{|\mu|}=
\frac{q^{2j\choose 2}}{\prod_{i\geq 1}(1-q^{2i})(1-tq^{2i})}
\end{align}
\end{theorem}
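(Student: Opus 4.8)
The plan is to reduce this to Proposition~\ref{2coredist} via the $2$-quotient/$2$-core decomposition. Recall that a partition $\mu$ is determined by its $2$-core and its $2$-quotient $(\mu_0,\mu_1)$, that $\mu \in \mathcal{P}_j$ exactly when the $2$-core has size $\binom{2j}{2}$, and that $|\mu| = \binom{2j}{2} + 2(|\mu_0|+|\mu_1|)$. So the left side of \eqref{ge1} factors as
\begin{align*}
q^{\binom{2j}{2}} \sum_{(\mu_0,\mu_1)} t^{h_{2,0}(\mu)} q^{2(|\mu_0|+|\mu_1|)},
\end{align*}
and it suffices to identify $\sum_{(\mu_0,\mu_1)} t^{h_{2,0}(\mu)} q^{2(|\mu_0|+|\mu_1|)}$ with $\prod_{i\geq 1}(1-q^{2i})^{-1}(1-tq^{2i})^{-1}$. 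Since Proposition~\ref{2coredist} (taking its conjugate form, which counts $2$-restricted partitions) shows that the number of $2$-restricted partitions with $2$-quotient size $N$ is the number of partitions of $N$ — independent of $j$ — it is natural to guess that $h_{2,0}$ decomposes nicely across the $2$-quotient and that the generating function localizes on one component of the quotient.

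First I would establish the key structural fact: the cells of $H_{2,0}(\mu)$ — cells with $l_v = a_v+1$, equivalently hook length $2l_v$ which is even, so these are exactly the cells removed when we strip a vertical domino — are governed by a single component of the $2$-quotient. Concretely, removing a $2$-hook from $\mu$ that is a vertical domino removes exactly one cell of $H_{2,0}$ in the appropriate $2$-quotient component, while removing a horizontal domino removes a cell counted by $h_{0,2}$; and stripping $2$-hooks to reach the $2$-core removes all of $H_{2,0}$. Tracking this through the edge-sequence description of the $2$-quotient, one sees that $h_{2,0}(\mu)$ equals $a_m(\mu_\epsilon)$ or a related largest-part/largest-multiplicity statistic on one designated component $\mu_\epsilon$ (the component whose ``vertical'' dominoes correspond to vertical dominoes of $\mu$), and that the other component is unconstrained. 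This is the content I'd extract from Propositions~\ref{hookpro} and~\ref{mshift} together with the domino-removal interpretation of $2$-quotients described just before the definition of the $m$-core.

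Given that, the computation finishes by separating variables over the two quotient components. The unconstrained component $\mu_{1-\epsilon}$ contributes $\sum_{\nu} q^{2|\nu|} = \prod_{i\geq 1}(1-q^{2i})^{-1}$ (the factor of $2$ because each quotient cell is worth $2$ cells of $\mu$). The designated component $\mu_\epsilon$ contributes $\sum_{\nu} t^{a_2(\nu)} q^{2|\nu|}$ if the relevant statistic is $a_2$, which by the well-known identity quoted in the introduction equals $\prod_{i\geq 1}(1-q^{2i-1})^{-1}$ — that is the wrong shape. So in fact I expect the correct statement of the structural fact to be that $h_{2,0}$ matches a \emph{largest part} statistic rather than $a_2$ on the quotient component — recall Proposition~\ref{pp2} says $h_{2,0}(\lambda) = a(\lambda_0)$ in the large-core regime — and $\sum_\nu t^{a(\nu)} q^{2|\nu|} = \prod_{i\ge 2}(1-q^{2i})^{-1} \cdot \sum_{\nu} t^{a(\nu)} q^{2|\nu|}$; grouping the contribution of the largest part $a(\nu)=r$ gives $\sum_{r\ge 0} t^r q^{2r}\prod_{i=1}^{r}(1-q^{2i})^{-1}\cdot(\text{stuff} \le r)$, which telescopes against the free component to yield exactly $\prod(1-q^{2i})^{-1}(1-tq^{2i})^{-1}$. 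Rather than guess which combinatorial identity does the job, I would instead run this entirely at the level of generating functions: apply Theorem~\ref{bij1}, which already gives that $h_{2,0}$ and $a_2$ are equidistributed over every $\mathcal{P}_j(n)$, so $\sum_{\mu\in\mathcal{P}_j} t^{h_{2,0}(\mu)}q^{|\mu|} = \sum_{\mu\in\mathcal{P}_j} t^{a_2(\mu)}q^{|\mu|}$, and then compute the right-hand side directly: condition on the largest repeated part being $2i$ or $2i-1$, peel it off, and use Proposition~\ref{2coredist} to count the $2$-restricted ``tail'' by size, which is independent of the resulting $2$-core. This reduces \eqref{ge1} to the elementary product identity $\sum_{\mu\in\mathcal{P}}t^{a_2(\mu)}q^{|\mu|} = \prod_{i\geq 1}(1-q^{2i-1})^{-1}(1-tq^{2i})^{-1}$ restricted to a fixed $2$-core, which is bookkeeping.

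The main obstacle is the bookkeeping in the last step: carefully controlling how fixing the $2$-core interacts with the ``largest repeated part'' decomposition of a partition, i.e.\ verifying that after removing the $a_2$-part and all its pairings, what remains is a $2$-restricted partition whose $2$-core is \emph{forced} and whose size generating function is $\prod_{i\ge 1}(1-q^{2i})^{-1}$ by Proposition~\ref{2coredist}. Everything else is either a direct appeal to Theorem~\ref{bij1} or standard manipulation of infinite products.
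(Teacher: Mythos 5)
Your final route is correct but is not the one the paper takes, and the detour through $a_2$ is avoidable. The paper's proof works directly with $h_{2,0}$: since $H_{2,0}(\mu)$ consists of the cells with leg length $0$ and odd arm length, $h_{2,0}(\mu)=\sum_i\lfloor(\mu_i-\mu_{i+1})/2\rfloor$ is exactly the number of disjoint pairs of equal \emph{columns} of $\mu$. Stripping all such pairs (each pair is a $2\times l$ block of vertical dominoes, so the $2$-core is preserved) gives a bijection between $\mathcal{P}_j$ and pairs $(\tilde\mu,\lambda)$ with $\tilde\mu\in\mathcal{P}_j$ $2$-restricted and $\lambda$ a partition into even parts recording the stripped column pairs; under this bijection $h_{2,0}(\mu)=l(\lambda)$ and $|\mu|=|\tilde\mu|+|\lambda|$, so the product formula falls out of Proposition \ref{2coredist} together with $\sum_{\lambda\in\mathcal{P}_e}t^{l(\lambda)}q^{|\lambda|}=\prod_{i\geq 1}(1-tq^{2i})^{-1}$. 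Your argument instead invokes Theorem \ref{bij1} to trade $h_{2,0}$ for $a_2$ and then strips pairs of equal \emph{rows}, tracking the largest stripped part; this is essentially the conjugate decomposition and it does close: the remainder has distinct parts (not ``$2$-restricted'' as you wrote, though Proposition \ref{2coredist} covers both forms by conjugation), its $2$-core equals that of $\mu$, and $\sum_\rho t^{a(\rho)}q^{2|\rho|}=\sum_\rho t^{l(\rho)}q^{2|\rho|}=\prod_{i\geq1}(1-tq^{2i})^{-1}$ by conjugation, so the telescoping identity you anticipate is not actually needed. What the paper's version buys is independence from Theorem \ref{bij1} and a stripped statistic ($l(\lambda)$ rather than $a(\lambda)$) whose generating function is immediate. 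One caution: your second paragraph mischaracterizes $H_{2,0}$ --- the condition $l_v=a_v+1$ with hook length $2l_v$ defines $H_{1,1}$, whereas $H_{2,0}$ requires $l_v=0$ and $a_v$ odd --- but since you abandon that branch, the error does not infect your final argument.
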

\begin{proof}
Given any partition $\mu\in \mathcal{P}_j$, remove all the pairs of repeated columns $2l_1,2l_2,\cdots, 2l_k$ of $\mu$, obtaining a $2$-restricted partition $\tilde{\mu}$. Note that the process of removing pairs of repeated columns is equivalent to moving some dominos from the original diagram, so it preserves the $2$-core, so $\tilde{\mu}\in \mathcal{P}_j$. Also note that the total number of repeated columns in a partition $\mu$ is equal $h_{2,0}(\mu)$(from the definition of $h_{2,0}$, each pair of repeated columns contribute one to $h_{2,0}$). Let $\lambda=\{2l_1,2l_2,\cdots,2l_k\}$. Notice that we have obtained a bijection between partitions in $\mathcal{P}_{j}$ and pairs of partitions $(\tilde{\mu},\lambda)$, which shows that the generating function for $h_{2,0}(\mu)$ is
\begin{align*}
\sum_{\mu\in \mathcal{P}_j}t^{h_{2,0}(\mu)}q^{|\mu|}
=\sum_{\tilde{\mu}\in \mathcal{P}_j\cap \mathcal{D}}q^{|\tilde{\mu}|}\sum_{\lambda\in \mathcal{P}_e}t^{l(\lambda)}q^{|\lambda|}=\frac{q^{2j\choose 2}}{\prod_{i\geq 1}(1-q^{2i})(1-tq^{2i})}
\end{align*}
where $\mathcal{P}_e$ is the set of partitions with even parts.
\end{proof}

\section{Generalization to the m-core case}\label{mcore_section}

Many of our results and conjectures can be generalized to cover the more general statistics $h_{\alpha,\beta}$ and make use of $m$-cores. In this section, we describe these results. We begin by stating the main theorem of \cite{BFN} in its full generality.

\begin{theorem}\label{genbfn} (\cite{BFN}, Corollary 1.3)
Let $\alpha$ and $\beta$ be non-negative integers which are not both zero. Then we have following generating function:
\begin{align*}
\sum_{\lambda \in \mathcal{P}} t^{h_{\alpha,\beta} (\lambda)} q^{|\lambda|}=\prod_{i\geq 1 \atop {(\alpha+\beta) \nmid 1}}\frac{1}{(1-q^{i})}\prod_{i \geq 1}\frac{1}{(1-tq^{(\alpha+\beta)i})}.
\end{align*}
\end{theorem}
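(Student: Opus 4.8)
Since the right-hand side of Theorem~\ref{genbfn} depends on $\alpha,\beta$ only through $m:=\alpha+\beta$, it suffices to (i) establish the identity for the single pair $(\alpha,\beta)=(m,0)$, and (ii) show that $h_{\alpha,\beta}$ and $h_{m,0}$ are equidistributed over $\mathcal{P}(n)$ for each $n$. Step (i) admits a short direct proof. A cell $v$ lies in $H_{m,0}(\lambda)$ iff $m\,l_v=0$, i.e.\ $l_v=0$ ($v$ is the bottom cell of its column), and $m\mid(a_v+1)$. In row $i$ the cells with $l_v=0$ occupy columns $\lambda_{i+1}+1,\dots,\lambda_i$, and such a cell in column $j$ has arm length $\lambda_i-j$, so it lies in $H_{m,0}$ exactly when $j\equiv\lambda_i+1\pmod m$; there are $\lfloor(\lambda_i-\lambda_{i+1})/m\rfloor$ such $j$. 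Hence $h_{m,0}(\lambda)=\sum_{i\ge1}\lfloor(\lambda_i-\lambda_{i+1})/m\rfloor$. Encoding $\lambda$ by its gap sequence $g_i:=\lambda_i-\lambda_{i+1}\ge0$, which may be an arbitrary sequence of nonnegative integers with finite support and satisfies $|\lambda|=\sum_{i\ge1}ig_i$, the generating function factors as a product over $i$:
\begin{align*}
\sum_{\lambda\in\mathcal{P}}t^{h_{m,0}(\lambda)}q^{|\lambda|}
&=\prod_{i\ge1}\sum_{g\ge0}t^{\lfloor g/m\rfloor}q^{ig}
=\prod_{i\ge1}\frac{1+q^i+\cdots+q^{(m-1)i}}{1-tq^{mi}}\\
&=\prod_{i\ge1}\frac{1-q^{mi}}{(1-q^i)(1-tq^{mi})}
=\prod_{m\nmid i}\frac{1}{1-q^i}\cdot\prod_{i\ge1}\frac{1}{1-tq^{mi}}.
\end{align*}
Here the last equality uses $\prod_{i\ge1}(1-q^i)=\prod_{i\ge1}(1-q^{mi})\prod_{m\nmid i}(1-q^i)$, and the result is precisely the right-hand side of Theorem~\ref{genbfn}; this settles the case $(\alpha,\beta)=(m,0)$.

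For step (ii), I would use the $m$-quotient with $m=\alpha+\beta$. As in Section~\ref{2core_section}, a partition $\lambda$ corresponds to a pair consisting of its $m$-core and its $m$-quotient $(\lambda_0,\dots,\lambda_{m-1})$, with $|\lambda|$ equal to the size of the core plus $m$ times the total size of the quotient, and the cells of $\lambda$ of hook length divisible by $m$ correspond bijectively to the cells of the quotient (refining the hook-removal statement of Section~\ref{2core_section}). Both $h_{\alpha,\beta}$ and $h_{m,0}$ count cells of $\lambda$ whose hook length is a multiple $ms$ of $m$ and whose arm and leg split in a prescribed way (for $h_{\alpha,\beta}$: $(l_v,a_v+1)=s(\beta,\alpha)$; for $h_{m,0}$: $l_v=0$, $a_v+1=ms$). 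Translating such a split through the edge-sequence analysis that underlies Theorem~\ref{bigcore} below shows that in the \emph{stable range}, where the $m$-core is large relative to the $m$-quotient, each of $h_{\alpha,\beta}(\lambda)$ and $h_{m,0}(\lambda)$ equals the largest part of one explicitly determined component of the quotient (for $m=2$ this is Proposition~\ref{pp2}). Since conjugating a component interchanges ``largest part'' and ``number of parts'', both statistics are, on the stable range, distributed like the number of parts of a single component, so they agree there.

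The remaining step is to pass from the stable range to all partitions, and it is the crux. It would suffice to prove that $h_{\alpha,\beta}$ is identically distributed over the sets $\overline{\mathcal{P}_j}(n)$ of partitions with $m$-quotient of size $n$ as the $m$-core varies; combined with the stable-range computation above (which pins down the distribution for every sufficiently large core) and summation over cores, this yields Theorem~\ref{genbfn}, and together with Theorem~\ref{bij1} it would also settle Conjecture~\ref{cj1} and, more generally, Conjecture~\ref{mcoreconj}. Such core-independence is presently known only for $h_{2,0}$, via the direct generating-function argument of Theorem~\ref{genhook20} (equivalently Proposition~\ref{2coredist}); for $h_{1,1}$, and for general $h_{\alpha,\beta}$, it is open. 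I would attempt it either (a) by exhibiting a size- and $h_{\alpha,\beta}$-preserving bijection between partitions with a fixed $m$-quotient size and different $m$-cores --- most naturally by sliding rim hooks, i.e.\ moving beads on the abacus, so as to change the core while tracking which cells have the arm/leg split defining $H_{\alpha,\beta}$ --- or (b) by extending the edge-sequence analysis into the unstable range: one writes $h_{\alpha,\beta}(\lambda)$ exactly in terms of the components of the quotient together with the $m$-shift $(k_0,\dots,k_{m-1})$, the dependence on the shift entering only through the numbers of $0$'s and $1$'s in certain length-$s$ windows of the other components' edge sequences, and then looks for an involution on $m$-tuples of partitions that cancels this shift-dependence. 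The main obstacle is precisely this coupling: away from the stable range $h_{\alpha,\beta}$ genuinely mixes all $m$ components of the quotient in a shift-dependent manner, and it is not clear which bijection untangles it --- this is the same difficulty that leaves Conjecture~\ref{cj1} open.
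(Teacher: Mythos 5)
Your proposal is not a complete proof: you say so yourself at the end, and the missing step is genuine. The crux you identify---that the distribution of $h_{\alpha,\beta}$ over partitions with a fixed $m$-quotient size is independent of the $m$-core, which would let you propagate the stable-range computation of Theorem \ref{bigcore} to all cores---is precisely Conjecture \ref{mcoreconj} (Remark \ref{mind}) of the paper, and it is open. Be aware of the status of the statement you were asked to prove: Theorem \ref{genbfn} is quoted from \cite{BFN}, where it is established by algebraic geometry; the paper does not prove it, and producing a combinatorial proof is the paper's stated (and unachieved) goal. The paper's own combinatorial progress is exactly parallel to yours: the $t=1$ specialization (Corollary \ref{multisumcor}), the coefficients of $t^0,t^1,t^2$ for $(\alpha,\beta)=(1,1)$ (Theorem \ref{k=0,1,2}), the stable range (Theorem \ref{bigcore}, Proposition \ref{pp2}), and core-independence only for $h_{m,0}$ (Theorem \ref{genhook20}, Proposition \ref{2coredist}). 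So nothing in your step (ii) is wrong as a program, but it terminates in the same open problem rather than in a proof, and you should not present the result as proved.

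What you do establish is worth keeping. Your step (i) is a correct, self-contained combinatorial proof of the case $(\alpha,\beta)=(m,0)$: the identification $h_{m,0}(\lambda)=\sum_{i\ge 1}\lfloor(\lambda_i-\lambda_{i+1})/m\rfloor$ is right, and the factorization over the gap sequence gives
\begin{align*}
\sum_{\lambda\in\mathcal{P}}t^{h_{m,0}(\lambda)}q^{|\lambda|}=\prod_{i\ge 1}\frac{1-q^{mi}}{(1-q^i)(1-tq^{mi})},
\end{align*}
which is the right-hand side of Theorem \ref{genbfn}. This is more direct than the paper's route to the same case, which goes through the core-refined identity (\ref{hookp0}) (remove repeated columns in pairs, reduce to $m$-restricted partitions, and use the bijections of Lemma \ref{increasecore}) and then sums over $m$-cores; on the other hand, the paper's route yields the finer core-by-core statement that your gap-sequence argument does not see. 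Your reduction of the general $(\alpha,\beta)$ to $(m,0)$ via stable-range equidistribution plus conjugation of a quotient component is the natural plan, but until the core-independence of the $h_{\alpha,\beta}$-distribution is proved (by an abacus/rim-hook bijection or otherwise), the theorem for general $(\alpha,\beta)$ rests on \cite{BFN}.
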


\begin{remark}
The theorem was originally proved in \cite{BF}, but with the assumption that $\alpha$ and $\beta$ were coprime. The proof of this theorem made heavy use of algebraic geometry, and recall that our goal is to provide a combinatorial proof.
\end{remark}

We now generalize the Conjecture \ref{cj1} of the previous section as follows:

\begin{conjecture}\label{mcoreconj}
Given any $m$-core $\lambda_m$, let $\mathcal{P}_{\lambda_m}$ denote the set of partitions with $m$-core $\lambda_m$. Then
\begin{align}
\notag &\sum_{\mu\in \mathcal{P}_{\lambda_m}}t^{h_{\alpha,\beta}(\mu)}q^{|\mu|}=
\frac{q^{|\lambda_m|}}{\prod_{i\geq 1}(1-q^{mi})^{m-1}(1-tq^{mi})},
\end{align}
where $\alpha$ and $\beta$ are non-negative integers with sum $m$.
\end{conjecture}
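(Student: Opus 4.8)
The plan is to pass to the $m$-quotient, thereby reducing Conjecture \ref{mcoreconj} to a single identity about $m$-quotients for which we expect $h_{\alpha,\beta}$ to have a transparent combinatorial description. Fix the $m$-core $\lambda_m$; then its $m$-shift $(k_0,\dots,k_{m-1})$ is determined (Proposition \ref{mshift}), and $\mu\mapsto(\lambda_0,\dots,\lambda_{m-1})$ identifies $\mathcal{P}_{\lambda_m}$ with the set of \emph{all} $m$-tuples of partitions (there is no further constraint, the $m$-shift being already fixed), under which $|\mu|=|\lambda_m|+m(|\lambda_0|+\dots+|\lambda_{m-1}|)$. Hence
\begin{align*}
\sum_{\mu\in\mathcal{P}_{\lambda_m}}t^{h_{\alpha,\beta}(\mu)}q^{|\mu|}=q^{|\lambda_m|}\sum_{(\lambda_0,\dots,\lambda_{m-1})}t^{h_{\alpha,\beta}(\mu)}q^{m(|\lambda_0|+\dots+|\lambda_{m-1}|)},
\end{align*}
so, dividing by $q^{|\lambda_m|}$ and replacing $q^m$ by a fresh variable $q$, the conjecture is equivalent to
\begin{align*}
\sum_{(\lambda_0,\dots,\lambda_{m-1})}t^{h_{\alpha,\beta}(\mu)}q^{|\lambda_0|+\dots+|\lambda_{m-1}|}=\prod_{i\geq1}\frac{1}{(1-q^i)^{m-1}(1-tq^i)},
\end{align*}
a quantity that in particular must not depend on the $m$-shift.

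The heart of the matter is the following expected lemma: there is an index $r=r(\alpha,\beta;k_0,\dots,k_{m-1})$ such that $h_{\alpha,\beta}(\mu)=a(\lambda_r)$ for every $\mu$ with $m$-core $\lambda_m$ and $m$-quotient $(\lambda_0,\dots,\lambda_{m-1})$; that is, $h_{\alpha,\beta}(\mu)$ depends on the $m$-quotient only through the largest part of one distinguished component. This is exactly what Proposition \ref{pp2} and Theorem \ref{bigcore} assert when the $m$-core is large relative to the $m$-quotient, and Conjecture \ref{mcoreconj} amounts to the assertion that these formulas persist without the size restriction. Granting the lemma, the first paragraph finishes the proof: the $m-1$ components $\lambda_c$ with $c\neq r$ are arbitrary and each contributes $\prod_{i\geq1}(1-q^i)^{-1}$, while the distinguished component contributes $\sum_{\nu}t^{a(\nu)}q^{|\nu|}=\sum_{\nu}t^{l(\nu)}q^{|\nu|}=\prod_{i\geq1}(1-tq^i)^{-1}$ (the first equality by conjugating partitions), which is exactly the required product. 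In fact one need not identify $r$ at all: it is enough to show $h_{\alpha,\beta}(\mu)$ equals the largest part of \emph{some} fixed component.

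To prove the lemma I see two routes. One is to combine Proposition \ref{hookpro}, which computes $h_{\alpha,\beta}(\mu)=\sum_i\operatorname{inv}(w^i)$ from the departure words of $\mu$, with the arithmetic-progression formula $M_c(j)=M(m(j+k_c)+c)$ defining the edge sequences of the $m$-quotient: one tracks how each inversion $EN$ in a departure word of $\mu$ is apportioned among the $m$ residue classes of the edge sequence, and verifies that the condition $\alpha l_v=\beta(a_v+1)$ defining $H_{\alpha,\beta}$ forces the corresponding cell into a fixed component $\lambda_r$, producing $a(\lambda_r)$ in total. The other route generalizes the bijection in the proof of Theorem \ref{genhook20}: one introduces a ``peeling'' operation that removes from $\mu$ exactly the configuration counted by $H_{\alpha,\beta}(\mu)$ (for $h_{2,0}$ it was one pair of equal columns per counted cell), leaving an ``$(\alpha,\beta)$-restricted'' partition $\tilde\mu$ with the same $m$-core, and then shows (a) the removed data forms a partition whose generating function supplies the factor $\prod_{i\geq1}(1-tq^{mi})^{-1}$, and (b) the generating function for $(\alpha,\beta)$-restricted partitions with $m$-core $\lambda_m$ is $q^{|\lambda_m|}\prod_{i\geq1}(1-q^{mi})^{-(m-1)}$, part (b) itself following from an $m$-quotient argument identifying such partitions with $(m-1)$-tuples of partitions, in the spirit of Proposition \ref{2coredist}.

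The main obstacle is establishing this lemma \emph{uniformly in the $m$-shift}: near the $m$-core the rim-hook/abacus bookkeeping that decides which component carries $h_{\alpha,\beta}$ is delicate — this is precisely why Proposition \ref{pp2} and Theorem \ref{bigcore} assume the core to be large — so a complete argument requires a careful analysis of how the hooks counted by $H_{\alpha,\beta}$ sit in the edge-sequence picture, small cores included. Two sanity checks support the plan: summing the conjectured identity over all $m$-cores $\lambda_m$ and using $\sum_{\lambda_m}q^{|\lambda_m|}=\prod_{i\geq1}\frac{(1-q^{mi})^m}{1-q^i}$ recovers exactly Theorem \ref{genbfn} with $\alpha+\beta=m$, and specializing to $m=2$, $\beta=0$ recovers Theorem \ref{genhook20}. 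As an intermediate target toward the $m=2$ case, one could first prove that $h_{1,1}$ — not just $h_{2,0}$ — is identically distributed over $\overline{\mathcal{P}_j}(n)$ as $j$ varies; combined with Proposition \ref{pp2} this would transfer the large-$j$ formula to all $j$, and hence to all $\mathcal{P}_j(n)$.
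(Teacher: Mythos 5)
The statement you are addressing is stated in the paper as a conjecture: the authors do not prove it, offering only partial results (Theorem \ref{bigcore} when the $m$-core is large relative to the $m$-quotient, the generating function (\ref{hookp0}) for the boundary statistic $h_{m,0}$, and Proposition \ref{012prop} for the coefficients of $t^0,t^1,t^2$ when $m=2$). So the real question is whether your plan would close the gap, and it would not --- and not only for the reason you acknowledge. Your reduction to the $m$-quotient, the observation that a single distinguished component carrying the statistic $a(\cdot)$ would contribute $\prod_{i\geq 1}(1-tq^{mi})^{-1}$ while the $m-1$ free components contribute $\prod_{i\geq 1}(1-q^{mi})^{-(m-1)}$, and both of your sanity checks are correct. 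The fatal problem is that your ``expected lemma'' --- that $h_{\alpha,\beta}(\mu)=a(\lambda_r)$ for a fixed component $r$ for \emph{every} $\mu$ with $m$-core $\lambda_m$ --- is false, not merely unproven. Take $m=2$, $(\alpha,\beta)=(1,1)$, and $\mu=(2,1,1)$: its $2$-core is empty and its $2$-quotient is $(\emptyset,(1,1))$, so the largest parts of the two components are $0$ and $1$, yet $h_{1,1}(\mu)=2$ (both cells $(1,1)$ and $(2,1)$ have leg exceeding arm by one). This is exactly why Theorem \ref{bigcore} carries the hypothesis $|s_{i_l}-s_{i_r}|\geq mn$: when the core is small, a type $(nl-1,n(m-l))$ portion of the edge sequence is no longer forced to localize in a single quotient component, and the exact identity $h_{l,m-l}(\lambda)=a(\lambda_{i_l})$ genuinely fails. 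Since the conjecture asserts only equidistribution, any correct argument must be distributional rather than a cell-by-cell identification, so the architecture of your plan needs to change, not just its missing step.

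The viable part of your proposal is the fallback you mention at the end: show that $h_{\alpha,\beta}$ is identically distributed over $\overline{\mathcal{P}_{\lambda_m}}(n)$ as the core varies with quotient size fixed, and transfer the formula from the large-core regime of Theorem \ref{bigcore} to all cores. This is precisely the strategy the authors themselves outline in the ``Partial Results'' subsection and carry out only for $h_{m,0}$ (via Lemma \ref{increasecore} and $m$-restricted partitions); it is where the open difficulty lives. Neither of your two proposed routes engages with the actual obstruction: the departure-word route would have to explain how inversions redistribute among residue classes when the shifts $s_{i}$ are close together, and the peeling route has no analogue of ``repeated columns'' once both $\alpha$ and $\beta$ are positive, since $H_{\alpha,\beta}$ is then not visibly a removable configuration preserving the $m$-core.
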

\begin{remark}\label{mind}
This would imply that as $\lambda_m$ varies over all $m$-cores if $\alpha+\beta=m$, $h_{\alpha,\beta}$ is identically distributed over $\overline{\mathcal{P}_{\lambda_m}}(n) = \{\mu\in \mathcal{P}_{\lambda_m} \text{with  $m$-quotient size $n$} \}$ .
\end{remark}

The following theorem proves Remark \ref{mind} when the core is sufficiently large in a certain sense (more precisely, when the numbers in its $m$-shift are far apart).

\begin{theorem}\label{bigcore}
Given a partition $\lambda$ with edge sequence $M$, let $(k_0, k_1, ..., k_{m-1})$ be its $m$-shift and let $(\lambda_0,\lambda_1, ..., \lambda_{m-1})$ be its $m$-quotient. Let $M_i$ be the edge sequence of $\lambda_i$, and let $n = \max_{i\neq j}\{|\lambda_i|+|\lambda_j|)$. Recall that we may view each $M_i$ as a subsequence of $M$. Define $s_j = m k_j + j$ to be the index of the $0$-th term of $M_i$ in the edge sequence $M$. Reorder the $s_i$'s so that $s_{i_1} < s_{i_2} < ... < s_{i_m}$. If $l$ is an index such that $|s_{i_l} - s_{i_r}| \geq mn$ for all $r \neq l$, then $h_{l, m-l}(\lambda) = a(\lambda_{i_l})$.
\end{theorem}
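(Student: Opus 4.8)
The plan is to reduce the statement to a local statement about the edge sequence $M$ near the $l$-th smallest index $s_{i_l}$, and to count the cells of $H_{l,m-l}(\lambda)$ directly there. Recall that a cell $v = (i,j)$ of $\lambda$ lies in $H_{l,m-l}(\lambda)$ precisely when $l\cdot l_v = (m-l)(a_v+1)$ and $m \mid a_v+l_v+1$; as in the proof of Proposition \ref{hookpro}, such a cell corresponds to a matched pair consisting of a $1$-step (the step leaving the bottom-right vertex of the hand of $v$) and a $0$-step (the step leaving the bottom-left vertex of the foot of $v$) in $M$, where these two steps differ in index by exactly $a_v+l_v+1$, a multiple of $m$, and moreover the $1$-step comes before the $0$-step in $M$. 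So $h_{l,m-l}(\lambda)$ counts, in the edge sequence $M$, pairs $(p,q)$ with $q-p$ a positive multiple of $m$, $M(p)=1$, $M(q)=0$, and such that the intervening profile of $M$ between $p$ and $q$ forces the arm and leg lengths to split in ratio $(m-l):l$. The first step is to translate this condition, via the $m$-quotient construction $M_i(t) = M(m(t+k_i)+i)$, into a statement purely about one residue class $i_l$ modulo $m$: I claim that for the matched pair to give a hook-difference cell of the right slope, both $p$ and $q$ must lie in the residue class $i_l \bmod m$, and then the pair corresponds exactly to an inversion (a $1$ before a $0$) in the single edge sequence $M_{i_l}$, i.e.\ to a cell in the \emph{first column} of $\lambda_{i_l}$ — whence $h_{l,m-l}(\lambda) = a(\lambda_{i_l})$, the number of such inversions being the number of $0$'s of $M_{i_l}$ lying before its first $1$, which is the largest part $a(\lambda_{i_l})$ of $\lambda_{i_l}$.

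The heart of the argument is the case analysis showing that a matched pair $(p,q)$ contributing to $h_{l,m-l}$ cannot straddle two different residue classes among $\{s_{i_1},\dots,s_{i_m}\}$ once the separation hypothesis $|s_{i_l}-s_{i_r}|\ge mn$ holds. Here is the idea: the values $a_v$ and $l_v$ for a candidate cell $v$ are controlled by how many $1$'s and $0$'s of $M$ lie strictly between the two matched steps, and those in turn are governed by the sizes $|\lambda_i|$ of the quotient components together with the offsets $s_i$. If the $1$-step sits in residue class $s_a$ and the $0$-step in residue class $s_b$ with $a \ne b$, then $a_v + l_v + 1 \geq |s_a - s_b|$ up to a correction of size at most $m(|\lambda_a| + |\lambda_b|) \leq mn$; combined with the slope constraint $l\cdot l_v = (m-l)(a_v+1)$ this would pin $a_v$ or $l_v$ to be at least on the order of $|s_a-s_b|$, and then one checks the corresponding hand or foot actually lands outside $\lambda$ — contradiction — unless $|s_a-s_b| < mn$. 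The hypothesis says this fails for every pair involving index $l$, so the only admissible pairs are those living entirely in class $i_l$. Once confined to class $i_l$, the matched pair $(p,q)$ with $q - p$ a multiple of $m$ pulls back to a matched $1$-before-$0$ pair in $M_{i_l}$; the divisibility condition $m \mid a_v + l_v + 1$ is automatic, and the slope condition $l\cdot l_v = (m-l)(a_v+1)$ translates to the statement that $v$ is in the first column of $\lambda_{i_l}$ (arm length $0$ forces $l_v$ determined, and the ratio works out) — so these pairs are in bijection with cells of the first column of $\lambda_{i_l}$, giving the count $a(\lambda_{i_l})$.

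I expect the main obstacle to be making the bookkeeping in the previous paragraph precise: carefully relating the index difference $q - p$ in $M$ to $a_v + l_v + 1$, tracking the $\pm m|\lambda_i|$ corrections coming from the sizes of the quotient parts, and verifying that in the cross-class case the hand or foot genuinely exits the diagram. A clean way to organize this is to work with the beta-set / first-column description of each $M_i$ and to note that reindexing $M_{i_r}$ shifts all its $1$'s and $0$'s within a window of width $O(m|\lambda_{i_r}|) = O(mn)$ around position $s_{i_r}$; then "$|s_{i_l} - s_{i_r}| \ge mn$ for all $r\ne l$" says exactly that the window around $s_{i_l}$ is disjoint (in the relevant sense) from all the others, which is what lets the counting decouple. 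Finally, specializing $m=2$, $l=1$ (so $\alpha = \beta = 1$) recovers Proposition \ref{pp2}, since there $n = |\lambda_0| + |\lambda_1|$ and the separation condition becomes $k = \max(2j, 1-2j) \geq mn/2 = n$, with the parity of $k$ determining which of $\lambda_0,\lambda_1$ is the residue class $i_l$ sitting farthest out.
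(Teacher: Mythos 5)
Your overall strategy is the paper's: pass to the edge sequence $M$, identify cells of $H_{l,m-l}(\lambda)$ with matched $1$-before-$0$ pairs (the paper calls the stretch between them a ``portion''), and use the separation hypothesis to say that the essential part of $M_{i_l}$ is disjoint from, and lies between or beside, those of the other $M_{i_r}$. However, the central case analysis is aimed at the wrong target. For a cell $v\in H_{l,m-l}(\lambda)$ one has $l\, l_v=(m-l)(a_v+1)$ and $m\mid a_v+l_v+1$, which forces $a_v=nl-1$ and $l_v=n(m-l)$ for some $n>0$; hence the matched $1$ and $0$ differ in index by exactly $nm$ and therefore \emph{always} lie in the same residue class mod $m$, i.e.\ in the same $M_{i_t}$. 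The ``cross-class'' case you analyze (the $1$-step in class $s_a$, the $0$-step in class $s_b$, $a\neq b$) is vacuous, and your argument says nothing about the real danger: a pair lying entirely in class $i_t$ with $t\neq l$. Ruling that out is the actual content of the proof. The paper does it by counting letters: the restriction of the portion to each $M_{i_r}$ with $r\neq t$ consists of exactly $n$ letters, all $0$'s if the essential portion of $M_{i_r}$ lies to the right and all $1$'s if it lies to the left (this is where the hypothesis $|s_{i_l}-s_{i_r}|\geq mn$, together with the bound relating $|\lambda_{i_r}|$ to the extent of its essential portion, is used); summing these contributions shows the portion would contain too many $0$'s if $t<l$ and too many $1$'s if $t>l$. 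You would need to supply this count.

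There is also a slip in the final identification. When $t=l$, the same letter count shows the restriction of the portion to $M_{i_l}$ is of type ``$n-1$ ones and $0$ zeros in between,'' i.e.\ it corresponds to a cell of $\lambda_{i_l}$ with leg length $0$ --- one such cell per column, hence $a(\lambda_{i_l})$ of them. These are neither arbitrary inversions of $M_{i_l}$ (which would count $|\lambda_{i_l}|$) nor the cells of the first column or the arm-zero cells (which would count $l(\lambda_{i_l})$), as your write-up variously suggests; your stated answer $a(\lambda_{i_l})$ is correct, but the justification you give would produce a different number. Your closing remark deducing Proposition \ref{pp2} from the $m=2$ specialization is fine once the above is repaired.
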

\begin{proof}
We begin with some definitions.
We call a contiguous subsequence of an edge sequence a type $(a,l)$ \textbf{portion} if it starts with $1$, ends with $0$, and has $a$ $1$'s and $l$ $0$'s in between. Note that the cells of a partition with arm $a$ and leg $l$ correspond to the portions of type $(a, l)$ in its edge sequence. We call the maximal portion of an edge sequence its \textbf{essential} portion, before which only $0$s occur and after which only $1$s occur (which corresponds to the border of its Young diagram).

We have the following relation between the size of a partition and the essential portion of its edge sequence. Given any partition $\lambda$ with edge sequence $M$, define $p_1$ and $p_2$ so that the essential portion of $M$ starts at the $-p_1$-th term and ends at $p_2$-th term. We then have the following fact
\begin{align*}
\max\{p_1,p_2\}\leq |\lambda|.
\end{align*}

Let us now return to the original problem. Recall that $M$ is the edge sequence of $\lambda$, and $M_j$ is the edge sequence of $\lambda_j$, which we will think of as a subsequence embedded in $M$. Since $|s_{i_l} - s_{i_r}| \geq mn$ for all $r\neq l$, as a subsequence of $M$, the essential portion of $M_{i_r}$ is entirely to the left of the essential portion of $M_{i_l}$ if $r < l$ and is entirely to the right of the essential portion of $M_{i_l}$ if $r > l$.

Note that the cells in $H_{l, m-l}(\lambda)$ correspond to type $(nl-1, n(m-l))$ portions of $M$, for $n\in \mathbb{Z}_{> 0}$. If $P$ is a type $(nl-1, n (m-l))$ portion of $M$, then the first term $1$ and the last term $0$ are both in $M_{i_t}$ for some $t$. This means that $P$ and some portion of $M_{i_t}$ share the same staring and ending terms. Let $P_r = P \cap M_{i_r}$(Since $M_{i_t}$ does not consists of consecutive terms, $P_r$ is different from $P$). We have three cases:

\begin{enumerate}
\item $t < l$. Then for $r\in\{l,l+1,\cdots,m\}$, $P$ is entirely to the left of the essential portion of $M_{i_r}$ as a subsequence of $M$, so $P_r$ is a sequence of $n$ $0$'s. Thus, $P$ has at least $n(m-l+1)$ $0$'s not including the last $0$, which contradicts the fact that $P$ is of type $(nl-1, n (m-l))$.

\item $t > l$. Then for $r\in\{1,2,\cdots,l\}$, $P$ is entirely to the right of the essential portion of $M_{i_r}$, so $P_r$ is a sequence of $n$ $1$'s. Thus, $P$ has at least $nl$ 1s not including the first $1$, which contradicts the fact that $P$ is of type $(nl-1, n (m-l))$.

\item $t = l$. Then for $r\in\{1,2,\cdots,l-1\}$, $P_r$ is a sequence of $n$ $1$s, and for $r\in\{l+1,l+2,\cdots m\}$, $P_r$ is a sequence of $n$ $0$'s. Suppose $P_l$ is of type $(a, b)$, then $P$ is of type $(a+n(l-1), b+n(m-l))$. Since $P$ is of type $(n l-1, n (m-l))$, $a = n-1$ and $b = 0$, so $P_l$ is of type $(n-1, 0)$. Conversely, for any portion of type $(n-1, 0)$ of $M_{i_l}$, the portion of $M$ sharing the same starting and ending terms with it is of type $(nl-1, n(m-l))$.
\end{enumerate}

Therefore the portions in $M$ of type $(nl-1, n(m-l))$, which correspond to the cells of $H_{l, m-l}(\lambda)$, are in bijection with the portions in $M_{i_l}$ of type $(n-1, 0)$. This shows that $h_{l, m-l}(\lambda)$ equals the number of cells with leg $0$ in $\lambda_{i_l}$, which is $a(\lambda_{i_l})$.
\end{proof}

Given a partition $\mu$, a \textbf{$k$-cell} is a cell $v=(i,j)$ with $(1,-1)$-label equal to $k$ mod $m$, i.e. a cell $v=(i,j)$ so that $i-j\equiv k\mod p$. Define $w_k(\mu)$ to be the total number of $k$-cells in $\mu$. Define $\mathcal{A}_{k}(\mu)$ ($\mathcal{R}_k(\mu)$) to be the total number of \textbf{addable} $k$-cells (\textbf{removable} $k$-cells) for $k=0,1,2\cdots, p-1$. An addable $k$-cell $v$ is called \textbf{conormal} if the number of addable $k$-cells above the row of $v$ minus the number of removable $k$-cells above the row of $v$ is strictly greater than that for any higher addable $k$-cell (consider the Young diagram as placed in the fourth quadrant). A removable $k$-cell $v$ is called \textbf{conormal} if the number of removable $k$-cells below the row of $v$ minus the number of addable $k$-cells below the row of $v$ is strictly greater than that for any lower addable $k$-cell(consider the Young diagram as placed in the fourth quadrant).

\begin{example}
In the following Young diagram, take $m=2$, we have labelled by letter $C$ all the addable conormal $1$-cells.

\setlength{\unitlength}{0.5cm}
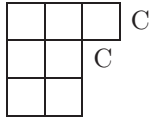
\begin{figure}[H]
\begin{center}
\begin{picture}(10,4)

\multiput(0,0)(1,0){2}{\line(1,0){1}}
\multiput(0,0)(1,0){3}{\line(0,1){1}}
\multiput(0,1)(1,0){2}{\line(1,0){1}}
\multiput(0,1)(1,0){3}{\line(0,1){1}}
\multiput(0,2)(1,0){3}{\line(1,0){1}}
\multiput(0,2)(1,0){4}{\line(0,1){1}}
\multiput(0,3)(1,0){3}{\line(1,0){1}}

\put(3.3,2.3){C}
\put(2.3,1.3){C}

\end{picture}
\end{center}
\caption{Example of addable conormal cells.}
\end{figure}
\end{example}

\begin{lemma}(\cite{FM} Proposition $2.30$)\label{coresize}
Given a partition $\mu$ and $w_k=w_k(\mu)$ for $k=0,1,2,\cdots,m-1$ as defined above, the size of the $m$-core $\mathcal{C}_m(\mu)$ of $\mu$ is
\begin{align*}
|\mathcal{C}_m(\mu)|=\frac{m}{2}\sum_{i=0}^{p-1}(w_{i+1}-w_{i})^2+\sum_{i=1}^{m-1}w_i-(m-1)w_0,
\end{align*}
where $w_{p}=w_{0}$.
\end{lemma}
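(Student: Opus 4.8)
The plan is to convert the $w_i$-sums on the right-hand side into sums in the $m$-shift $(k_0,\ldots,k_{m-1})$ of $\mu$ by means of Proposition \ref{mshift}, and thereby reduce the statement to the (classical) formula for the size of an $m$-core in terms of its $m$-shift.

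First I would carry out the conversion. Write $N_i$ for the number of cells of $\mu$ with $(-1,1)$-label $\equiv i \pmod m$; since the $(1,-1)$-label of a cell is the negative of its $(-1,1)$-label, we have $w_0 = N_0$ and $w_k = N_{m-k}$ for $1 \le k \le m-1$. Proposition \ref{mshift} gives $k_i = N_i - N_{i+1}$ (indices read mod $m$, with $N_m := N_0$), so $k_i = w_{m-i} - w_{m-1-i}$ for all $i$ (again cyclically, with $w_m := w_0$). Putting $\delta_j := w_{j+1} - w_j$, which satisfy $\sum_{j=0}^{m-1}\delta_j = 0$, this says $k_i = \delta_{m-1-i}$ --- a cyclic relabeling --- and hence $\sum_{i=0}^{m-1} k_i^2 = \sum_{i=0}^{m-1}(w_{i+1}-w_i)^2$, while Abel summation gives $\sum_{i=0}^{m-1} i k_i = \sum_{i=0}^{m-1}(m-1-i)\delta_i = -\sum_{i=0}^{m-1} i \delta_i = \sum_{i=1}^{m-1} w_i - (m-1)w_0$. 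Therefore the asserted identity is equivalent to $|\mathcal{C}_m(\mu)| = \tfrac{m}{2}\sum_{i=0}^{m-1} k_i^2 + \sum_{i=0}^{m-1} i k_i$.

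It remains to prove this last identity. Both of its sides depend only on the $m$-shift of $\mu$: the left because an $m$-core is uniquely determined by its $m$-shift (and $\mathcal{C}_m(\mu)$ has the same $m$-shift as $\mu$), the right by inspection. Since every $m$-shift is realized by some $m$-core, it suffices to treat the case where $\mu = \mathcal{C}$ is itself an $m$-core. For such $\mu$, the edge sequence $M$ splits into $m$ runners, the $i$-th of which (the terms $M(s)$ with $s \equiv i \pmod m$) is the edge sequence of the empty partition, i.e. $M(s) = 0$ for $s < mk_i + i$ and $M(s) = 1$ for $s \ge mk_i + i$. The cells of $\mathcal{C}$ are in bijection with the pairs $(s,t)$, $s < t$, $M(s) = 1$, $M(t) = 0$; since on a single runner every $1$ follows every $0$, each such pair has $s$ and $t$ on distinct runners, and summing the number of such pairs over all ordered pairs of runners is a finite computation yielding $|\mathcal{C}| = \tfrac{m}{2}\sum_i k_i^2 + \sum_i i k_i$. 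Alternatively, this is precisely the well-known abacus formula for the size of an $m$-core, which may simply be cited.

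The only genuinely computational point is that last count: enumerating the pairs of positions lying on two distinct runners entails a short case analysis according to how each residue $i$ sits relative to the integer $k_i$, and one must check that this case-dependence cancels, leaving the clean quadratic. Everything else --- the conversion via Proposition \ref{mshift} and the Abel summation --- is routine bookkeeping, and the reduction to the $m$-core case is exactly what eliminates the apparent dependence of the individual $w_i$ on data beyond the $m$-shift.
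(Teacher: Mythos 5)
Your proposal is correct, but there is nothing in the paper to compare it against: the authors state Lemma \ref{coresize} as a quotation of Proposition 2.30 of \cite{FM} and give no proof, so your argument supplies what the paper only cites. Your route is the natural one. The translation $w_0=N_0$, $w_k=N_{m-k}$ is right (the $(1,-1)$- and $(-1,1)$-labels are negatives of each other), the relabeling $k_i=\delta_{m-1-i}$ with $\delta_j=w_{j+1}-w_j$ gives $\sum_i k_i^2=\sum_i(w_{i+1}-w_i)^2$, and the Abel summation $\sum_i ik_i=-\sum_j j\delta_j=\sum_{j=1}^{m-1}w_j-(m-1)w_0$ checks out, so the lemma is indeed equivalent to $|\mathcal{C}_m(\mu)|=\tfrac{m}{2}\sum k_i^2+\sum ik_i$. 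The reduction to the case where $\mu$ is itself an $m$-core is legitimate because both sides depend only on the $m$-shift (each $m$-rim-hook meets every content class mod $m$ exactly once, so the differences $w_{i+1}-w_i$ are invariant under hook removal). For the final count, the case analysis you allude to does cancel cleanly: with $x_{ij}=k_j-k_i+\chi(j>i)$ one has $x_{ji}=1-x_{ij}$, exactly one ordered pair $(i,j)$ contributes $\binom{x_{ij}}{2}$ cells, and since $\binom{x}{2}=\binom{1-x}{2}$ as polynomials the total is $\sum_{i<j}\tfrac{1}{2}(k_j-k_i)(k_j-k_i+1)=\tfrac{m}{2}\sum k_i^2+\sum ik_i$, using $\sum k_i=0$. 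So the proof is complete as sketched; if you wanted to make it fully self-contained you would only need to write out that last two-line computation rather than citing the abacus formula.
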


\begin{lemma}\label{addremovedif}
Given a partition $\mu$ and $w_k=w_k(\mu)$ for $k=0,1,2,\cdots,m-1$ as defined above, we have the following relation between the number of addable $k$-cells and the number of removable $k$-cells of $\mu$:
\begin{align*}
\mathcal{A}_k(\mu)-\mathcal{R}_k(\mu)=
\left\{
\begin{array}{c}
w_{k+1}+w_{k-1}-2w_{k},\text{ if } k\neq 0\\
w_{p-1}+w_{1}-2w_{0}+1,\text{ if } k=0
\end{array}\right.
\end{align*}
where $w_m=w_0$. Moreover, if $\mu$ is a $m$-core, then either $\mathcal{A}_k(\mu)=0$ or $\mathcal{R}_k(\mu)=0$.
\end{lemma}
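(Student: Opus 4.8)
\textbf{Proof proposal for Lemma \ref{addremovedif}.}

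The plan is to compute both sides of the claimed identity directly in terms of the diagonal data of $\mu$, using the combinatorial description of addable and removable cells along the $(1,-1)$-diagonals. First I would recall that a cell $v$ is addable to $\mu$ precisely when it is an outer corner of the complement of the Young diagram, and removable precisely when it is an outer corner of the diagram itself; placing $\mu$ in the fourth quadrant, these corners are exactly the places where the edge sequence $M$ of $\mu$ has a consecutive pair $10$ (a removable cell, since we step east then north, leaving a cell) or $01$ (an addable cell). The key observation is that the $(1,-1)$-label of such a corner cell is determined by the position in the edge sequence: the corner at index $t$ (between $M(t)$ and $M(t+1)$) sits on the diagonal indexed by $t$ in the natural way, so addable/removable $k$-cells correspond to occurrences of $01$/$10$ at indices congruent to $k \bmod m$. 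Thus $\mathcal{A}_k(\mu) - \mathcal{R}_k(\mu)$ counts (occurrences of $01$) minus (occurrences of $10$) among the relevant indices of $M$.

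Next I would convert this count of ascents minus descents into the boundary values of the diagonal-length function. Along any fixed residue class $k \bmod m$, reading the subsequence $M(k), M(k+m), M(k+2m), \dots$ of $0$'s and $1$'s, the difference between the number of $01$ patterns and the number of $10$ patterns telescopes: it equals $\chi(\text{sequence ends in }1, \text{ starts in }0)$ type boundary contributions. Translating back: using the identity $n_r = |\{t\ge r: M(t)=0\}|$ for $r \ge 0$ and $n_r = |\{t<r: M(t)=1\}|$ for $r \le 0$ from the proof of Proposition \ref{mshift} (where $n_r$ is the $r$-th $(1,-1)$-diagonal length), one shows that the number of $01$'s minus the number of $10$'s among indices $\equiv k$ equals a second difference of the quantities $w_i = w_i(\mu) = \sum_{r \equiv i} n_r$. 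Carrying out this telescoping carefully gives $w_{k+1} + w_{k-1} - 2w_k$ for $k \ne 0$; the extra $+1$ when $k = 0$ comes from the fact that the $0$-th diagonal (the main diagonal) is nonempty exactly when $\mu$ is nonempty, equivalently from the asymmetry in the indexing of the edge sequence at the origin (the number of $1$'s strictly before index $0$ equals the number of $0$'s at or after index $0$, which forces an extra $01$-type transition across the origin in the $k=0$ class). I expect this bookkeeping at the origin to be the main obstacle: getting the off-by-one exactly right requires being careful about whether the step leaving the main diagonal is counted as a $0$ at index $0$, and about the convention that $w_m = w_0$.

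Finally, for the last assertion, suppose $\mu$ is an $m$-core. Then by Lemma \ref{coresize} the core size is a fixed quadratic form in the $w_i$, and more to the point, a partition is an $m$-core if and only if its edge sequence $M$ has the property that along no residue class $k \bmod m$ does a $1$ ever appear before a $0$ once read in order — equivalently, each of the $m$ interleaved subsequences is itself of the form (infinitely many $0$'s)(infinitely many $1$'s). Under this condition, within a fixed residue class there is at most one transition, and it is a $01$; hence among indices $\equiv k \bmod m$ the pattern $10$ never occurs, so $\mathcal{R}_k(\mu)$ records only... wait — more precisely, I would argue: if $\mu$ is an $m$-core then removing an $m$-hook is impossible, which is exactly the statement that no residue class of $M$ contains a $1$ followed later by a $0$; therefore in each residue class either all the $01$-transitions are absent (giving $\mathcal{A}_k = 0$) or, since there is at most one transition and it must be $01$ when present, there are no $10$-transitions at all in that class beyond the forced ones — so $\mathcal{R}_k(\mu) = 0$ whenever $\mathcal{A}_k(\mu) > 0$ and vice versa. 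This dichotomy is immediate once the edge-sequence characterization of $m$-cores is in hand, so the real content of the lemma is the first identity, and the first identity reduces to the telescoping computation described above together with the careful treatment of the origin.
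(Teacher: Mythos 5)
Your proposal is correct in substance but takes a genuinely different route from the paper. The paper's own proof is a one-line induction on the number of cells (check the empty partition, then track how adding a single cell changes each side), with all details left to the reader and with no argument at all offered for the final dichotomy. You instead evaluate both sides directly on the edge sequence. This works: the pointwise identity $\chi(M(t)=0,M(t+1)=1)-\chi(M(t)=1,M(t+1)=0)=M(t+1)-M(t)$, combined with the relations $M(r)=1-n_r+n_{r+1}$ for $r\geq 0$ and $M(r)=n_{r+1}-n_r$ for $r\leq -1$ extracted from the proof of Proposition \ref{mshift}, converts the sum of $M(t+1)-M(t)$ over one residue class of $t$ into $\sum_{s\equiv k}(n_{s+1}+n_{s-1}-2n_s)$ plus a correction of exactly $+1$ coming from the single splice index $t=-1$, where the two formulas for $M$ meet; this is $w_{k+1}+w_{k-1}-2w_k+\chi(k=0)$ as required (the discrepancy between the $(1,-1)$- and $(-1,1)$-labelling conventions is harmless because the second difference is symmetric). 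What your approach buys is that it also proves the ``moreover'' clause, which the paper's induction does not obviously yield; what it costs is more careful bookkeeping at the origin, which you correctly anticipate.

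Two points need repair before the argument is complete. First, the $01$ and $10$ patterns you count occupy \emph{consecutive} positions $t,t+1$ of the full edge sequence and therefore straddle two adjacent residue classes mod $m$; they are not patterns inside the single subsequence $M(k),M(k+m),M(k+2m),\dots$, so ``the difference telescopes along the residue class'' is not literally what happens --- the telescoping is the displayed pointwise identity summed over $t$ in one class, then re-expressed via the $n_r$. Second, and for the same reason, your dichotomy argument is garbled as written: for an $m$-core each runner $(M(c+jm))_{j\in\mathbb{Z}}$ is indeed all $0$'s followed by all $1$'s, with a single switch at some position $\tau_c$, but an addable or removable $k$-cell compares runner $c$ with runner $c+1$ rather than living inside one runner. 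The correct statement is that the number of removable (resp.\ addable) cells whose pattern begins on runner $c$ equals $\max(0,\tau_{c+1}-\tau_c)$ (resp.\ $\max(0,\tau_c-\tau_{c+1})$); exactly one of these two quantities is nonzero unless both vanish, which gives the dichotomy. With these two repairs your proof is complete and, in my view, preferable to the paper's sketch.
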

\begin{proof}
It is easy to check that the relation holds for the empty partition, imply induction on the number of cells, one can show that the relation holds for partitions with any size. Detailed proofs are left for readers as an exercise.
\end{proof}
\begin{remark}
Since $w_{k+1}(\mu)+w_{k-1}(\mu)-2w_{k}(\mu)$ depends only on the $m$-core of $\mu$, $\mathcal{A}_k(\mu)-\mathcal{R}_k(\mu)$ is constant over partitions with the same $m$-core for any $k\in \{0,1,2,\cdots,m-1\}$.
\end{remark}

Suppose we are given a partition $\mu$ and some $l\in\{0,1,2,\cdots, m-1\}$. We know from Lemma \ref{addremovedif} that $\mathcal{A}_l (\mu)-\mathcal{R}_l (\mu)=w_{l+1}(\mu)+w_{l-1}(\mu)-2w_{l}(\mu)+\chi(l=0)$. We obtain a new partition in the following way: if $\mathcal{A}_l(\mu)-\mathcal{R}_l(\mu)\geq 0$, then we add $\mathcal{A}(l)-\mathcal{R}(l)$ $k$-cells to $\mu$; else if $\mathcal{A}(l)-\mathcal{R}(l)< 0$, we remove $\mathcal{R}_l(\mu)-\mathcal{A}_l(\mu)$ $l$-cells from $\mu$. Call this new partition $\tilde{\mu}$.
\begin{lemma}\label{changecore}
The $m$-quotients of $\mu$ and $\tilde{\mu}$ have the same size.
\end{lemma}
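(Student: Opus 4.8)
The plan is to reduce the statement to the size formula for $m$-cores. Recall from the discussion preceding this section that every partition $\nu$ satisfies $|\nu| = |\mathcal{C}_m(\nu)| + m\cdot(\text{size of the }m\text{-quotient of }\nu)$, where $\mathcal{C}_m(\nu)$ is the $m$-core. Hence the claim that $\mu$ and $\tilde\mu$ have $m$-quotients of the same size is equivalent to
\begin{align*}
|\tilde\mu| - |\mathcal{C}_m(\tilde\mu)| = |\mu| - |\mathcal{C}_m(\mu)|.
\end{align*}
Write $w_k = w_k(\mu)$ (indices read mod $m$, with $w_m = w_0$) and set $d = \mathcal{A}_l(\mu) - \mathcal{R}_l(\mu)$, which by Lemma \ref{addremovedif} equals $w_{l+1} + w_{l-1} - 2w_l + \chi(l = 0)$. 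By the construction of $\tilde\mu$, it is obtained from $\mu$ by adjoining $d$ cells of $(1,-1)$-label congruent to $l$ when $d \geq 0$, and by deleting $-d$ such cells when $d < 0$; either way every cell altered has label $\equiv l$, so $w_l(\tilde\mu) = w_l + d$, $w_k(\tilde\mu) = w_k$ for $k \not\equiv l$, and $|\tilde\mu| = |\mu| + d$. That such a move exists and keeps us inside the set of partitions is immediate from the bracketing description of conormal cells, and I would not reprove it.

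First I would compute $|\mathcal{C}_m(\tilde\mu)| - |\mathcal{C}_m(\mu)|$ from Lemma \ref{coresize}, noting that only the terms involving $w_l$ change. The linear part $\sum_{i=1}^{m-1} w_i - (m-1)w_0$ changes by $d$ when $l \neq 0$ and by $-(m-1)d$ when $l = 0$. In the quadratic part $\tfrac{m}{2}\sum_i (w_{i+1}-w_i)^2$ only the two summands containing $w_l$ move; writing $a = w_{l+1} - w_l$ and $b = w_l - w_{l-1}$, they become $(a-d)^2$ and $(b+d)^2$, and
\begin{align*}
(a-d)^2 + (b+d)^2 - a^2 - b^2 = 2d\bigl(d + (b - a)\bigr) = 2d\,\chi(l=0),
\end{align*}
where the last step uses $b - a = 2w_l - w_{l+1} - w_{l-1} = \chi(l=0) - d$. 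So the quadratic part contributes $m\,d\,\chi(l=0)$, and altogether $|\mathcal{C}_m(\tilde\mu)| - |\mathcal{C}_m(\mu)|$ equals $0 + d = d$ when $l \neq 0$ and $md - (m-1)d = d$ when $l = 0$. In both cases this equals $|\tilde\mu| - |\mu| = d$, which yields the displayed identity and hence the lemma.

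The computation is routine; the only thing that genuinely needs care is the case split $l = 0$ versus $l \neq 0$, since the $\chi(l=0)$ corrections appear simultaneously in Lemma \ref{coresize} and in Lemma \ref{addremovedif}, and one must check that they cancel so as to produce the single uniform value $d$ in both the change of $|\mu|$ and the change of $|\mathcal{C}_m(\mu)|$. The secondary, more conceptual point — that one can actually add or remove exactly $|d|$ conormal $l$-cells and remain a partition — is where one leans on the standard theory behind \cite{FM} rather than on a direct argument.
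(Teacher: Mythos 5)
Your proposal is correct and follows essentially the same route as the paper: both apply Lemma \ref{coresize} to show that $|\mathcal{C}_m(\tilde\mu)|-|\mathcal{C}_m(\mu)|$ equals $d=\mathcal{A}_l(\mu)-\mathcal{R}_l(\mu)$, which matches $|\tilde\mu|-|\mu|$, so the $m$-quotient size is unchanged. The paper merely asserts the key equality in its displayed computation, whereas you verify it explicitly (including the cancellation of the $\chi(l=0)$ corrections between the quadratic and linear parts), so your write-up is a more complete version of the same argument.
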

\begin{proof}
Let $\tilde{w}_{k}=w_{k}(\tilde{\mu})$ and $w_{k}=w_{k}(\mu)$. From the construction of $\tilde{\mu}$, we have
\begin{align*}
\tilde{w}_k=\left\{
\begin{array}{cc}
w_k,&\quad k\neq l\\
w_{l+1}+w_{l-1}-w_{l}+\chi(l=0),&\quad k=l
\end{array}\right.
\end{align*}
From Lemma \ref{coresize},
\begin{align*}
\mathcal{C}_p(\tilde{\mu})
=&\frac{m}{2}\sum_{i=0}^{m-1}(\tilde{w}_{i+1}-\tilde{w}_{i})^2+\sum_{i=1}^{m-1}\tilde{w}_i-(m-1)\tilde{w}_0\\
=&\frac{m}{2}\sum_{i=0}^{m-1}(w_{i+1}-w_{i})^2+\sum_{i=1}^{m-1}w_i-(m-1)w_0+(w_{l+1}+w_{l-1}-2w_{l}+\chi(l=0))\\
=&\mathcal{C}_m(\mu)+w_{l+1}+w_{l-1}-2w_{l}+\chi(l=0).
\end{align*}
Therefore $\mu$ and $\tilde{\mu}$ have the same $m$-quotient size, as both the size of $\tilde{\mu}$ and the size of $\tilde{\mu}$'s $2$-core are $w_{l+1}+w_{l-1}-2w_{l}+\chi(l=0)$ greater than those of $\mu$.
\end{proof}

\begin{remark}
Suppose that we have an $m$-core $\lambda_m$. From Lemma \ref{addremovedif}, we have that either $\mathcal{A}_k(\lambda_m)=0$ or $\mathcal{R}_k(\lambda_m)=0$ for any $k\in\{0,1,2,\cdots,m-1\}$. Furthermore, Lemma \ref{changecore} tells us that we can add all the addable $k$-cells if $\mathcal{A}_k(\lambda_m)>0$, or, if $\mathcal{R}_k(\lambda_m)>0$, remove all the removable $k$-cells, and still be left with an $m$-core. In this way, we see that there is a natural poset structure on the set of $m$-cores which we may define as follows: given $m$-cores $\lambda_m$ and $\mu_m$, let $\lambda_m \lessdot \mu_m$ if and only if $\mu_m$ can be obtained from $\lambda_m$ by adding all the addable $k$-cells for some $k\in \{0,1,2,\cdots,m-1\}$. Then this poset of $m$-cores contains unique minimal element, i.e. the empty partition, and is a graded poset.
\end{remark}

We now show the above relation of adding (or removing) all the addable (or removable) $k$-cells of an $m$-core induces a bijection between $m$-restricted partitions with a given $m$-core, and those with a different $m$-core. This is a generalization of \cite{MOF}, where Fayers gives such a bijection in the $2$-core case.

\begin{lemma}\label{increasecore}
Given an $m$-core $\lambda_m$ such that $\mathcal{A}_k(\lambda_m)>0$, let $\mu_m$ be the $m$-core obtained from $\lambda_m$ by adding all the addable $k$-cells. Then the following is a bijection between $m$-restricted partitions with $m$-core $\lambda_m$ to $m$-restricted partitions with $m$-core $\mu_m$: given a partition $\lambda$ with $m$-core $\lambda_m$, map $\lambda$ to $\mu$ by adding the $\mathcal{A}_k(\lambda)-\mathcal{R}_k(\lambda)$ lowest conormal $k$-cells from $\mu$. Furthermore, this map preserves $m$-quotient size.
\end{lemma}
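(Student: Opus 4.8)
The plan is to produce an explicit two-sided inverse and verify that the map and its inverse both respect $m$-core, $m$-quotient size, and $m$-restrictedness. Write $\Phi$ for the map of the lemma (adjoin to $\lambda$ all of its conormal addable $k$-cells), and let $\Psi$ delete from a partition all of its conormal removable $k$-cells, defined by the symmetric recipe of the text. It suffices to check: (i) $\Phi$ is well defined and carries the $m$-restricted partitions with $m$-core $\lambda_m$ to the $m$-restricted partitions with $m$-core $\mu_m$, preserving $m$-quotient size; (ii) $\Psi$ does the reverse; and (iii) $\Psi\circ\Phi$ and $\Phi\circ\Psi$ are identities.

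I would start with the bookkeeping. Removing an $m$-hook lowers each count $w_j$ by exactly one, since the cells of an $m$-hook represent each residue class modulo $m$ exactly once; hence $w_{k+1}-2w_k+w_{k-1}$ takes the same value on any partition as on its $m$-core, and Lemma \ref{addremovedif} gives $\mathcal{A}_k(\lambda)-\mathcal{R}_k(\lambda)=\mathcal{A}_k(\lambda_m)-\mathcal{R}_k(\lambda_m)=\mathcal{A}_k(\lambda_m)>0$ for every $\lambda$ with $m$-core $\lambda_m$ (the last equality because a core has $\mathcal{A}_k=0$ or $\mathcal{R}_k=0$). So the signature of addable/removable $k$-cells of $\lambda$ cancels to exactly $\mathcal{A}_k(\lambda_m)$ conormal addable cells and no conormal removable cells; thus ``the $\mathcal{A}_k(\lambda)-\mathcal{R}_k(\lambda)$ lowest conormal $k$-cells'' of $\lambda$ are simply all of its conormal addable $k$-cells, adjoining them (an antichain of outer corners) yields a partition, and $\Phi$ is well defined; applied to $\lambda_m$ this identifies $\mu_m=\Phi(\lambda_m)$. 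Now put $\mu=\Phi(\lambda)$. Since $\mu$ is obtained from $\lambda$ by adding $\mathcal{A}_k(\lambda)-\mathcal{R}_k(\lambda)$ cells of residue $k$, it is exactly the partition ``$\tilde{\mu}$'' of the construction preceding Lemma \ref{changecore} with $l=k$, so that lemma tells us that $\mu$ and $\lambda$ have the same $m$-quotient size, that $w_j(\mu)=w_j(\lambda)$ for $j\neq k$, and that $w_k(\mu)=w_{k+1}(\lambda)+w_{k-1}(\lambda)-w_k(\lambda)+\chi(k=0)$; the same statements hold with $(\mu_m,\lambda_m)$ in place of $(\mu,\lambda)$. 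Subtracting, $w_j(\mu)-w_j(\mu_m)=w_j(\lambda)-w_j(\lambda_m)$ for all $j$, i.e.\ the residue counts of $\mu$ and of $\mu_m$ differ by the same global constant as do those of $\lambda$ and $\lambda_m$. Since an $m$-core is determined by its $m$-shift, and Proposition \ref{mshift} builds the $m$-shift out of consecutive differences of residue counts (hence is insensitive to a global shift), $\mu$ and $\mu_m$ have the same $m$-shift, so $\mu$ has $m$-core $\mu_m$.

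The remaining point is that $\mu$ is again $m$-restricted, and this is the heart of the matter — it is here that the argument extends Fayers' $2$-core bijection \cite{MOF}. I would argue on the $m$-runner abacus: encode ``$m$-restricted'' as the absence of a certain forbidden bead configuration, describe the effect of $\Phi$ as sliding prescribed beads one step along their runners, and verify the forbidden configuration is never produced (equivalently, one may quote the classical fact that adjoining conormal addable cells preserves $m$-restrictedness). The map $\Psi$ is then handled by the mirror computation: when $\mu$ has $m$-core $\mu_m$ one finds $\mathcal{R}_k(\mu)-\mathcal{A}_k(\mu)=\mathcal{R}_k(\mu_m)=\mathcal{A}_k(\lambda_m)>0$, so $\mu$ has exactly $\mathcal{A}_k(\lambda_m)$ conormal removable $k$-cells and no conormal addable $k$-cells, and deleting them returns an $m$-restricted partition with $m$-core $\lambda_m$ and the same $m$-quotient size (by the cell-removal analog of Lemma \ref{changecore}). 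Finally, $\Psi\circ\Phi=\mathrm{id}$ and $\Phi\circ\Psi=\mathrm{id}$ are the standard facts that ``add all conormal addable $k$-cells'' and ``remove all conormal removable $k$-cells'' are mutually inverse between the two ends of each $k$-string, combined with the observations above that $\lambda$ sits at the end with no conormal removable $k$-cells and $\mu$ at the end with no conormal addable $k$-cells. Thus the main obstacle is precisely the preservation of $m$-restrictedness; everything else is bookkeeping with Lemmas \ref{changecore} and \ref{addremovedif} and Proposition \ref{mshift}.
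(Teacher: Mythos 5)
Your overall architecture is the same as the paper's: define the forward map, control the $m$-core and $m$-quotient size through Lemma \ref{addremovedif} and Lemma \ref{changecore}, and invert by deleting the highest conormal removable $k$-cells. Your identification of the $m$-core of $\mu$ via Proposition \ref{mshift} (consecutive differences of residue counts are insensitive to the global shift produced by hook removal) is actually more careful than the paper's one-line appeal to Lemma \ref{coresize}, and you correctly isolate preservation of $m$-restrictedness as the point that needs genuine input.

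There is, however, a concrete false step: the claim that the addable/removable $k$-cells of $\lambda$ ``cancel to exactly $\mathcal{A}_k(\lambda_m)$ conormal addable cells and no conormal removable cells.'' The cancellation only controls the difference: writing $\varphi$ and $\varepsilon$ for the numbers of conormal addable and conormal removable $k$-cells, one has $\varphi-\varepsilon=\mathcal{A}_k(\lambda)-\mathcal{R}_k(\lambda)=\mathcal{A}_k(\lambda_m)$, but $\varepsilon$ need not vanish. For example, with $m=2$ and $k=0$, the $2$-restricted partition $(2,1,1)$ has empty $2$-core (so $\mathcal{A}_0(\lambda_m)=1$), yet it has two conormal addable $0$-cells, $(1,3)$ and $(2,2)$, and one conormal removable $0$-cell, $(3,1)$. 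Well-definedness of $\Phi$ survives, since $\varphi\geq\varphi-\varepsilon$ guarantees at least $\mathcal{A}_k(\lambda)-\mathcal{R}_k(\lambda)$ conormal addable cells from which to take the lowest ones (this is exactly why the lemma says ``lowest'' rather than ``all''). But your justification that $\Psi\circ\Phi$ and $\Phi\circ\Psi$ are identities --- that $\lambda$ and $\mu$ sit at the two \emph{ends} of a $k$-string --- collapses, because $\lambda$ is in general not at the bottom of its string. What is needed instead is the standard reduced-signature computation: adding the $a$ lowest conormal addable $k$-cells converts precisely those cells into the $a$ highest conormal removable $k$-cells of the result and leaves the rest of the reduced signature unchanged, so that the two operations are mutually inverse anywhere on the string, not only at its ends. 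That same signature analysis is also what establishes that $m$-restrictedness is preserved (the step you flag as the heart of the matter and propose to handle on the abacus), so supplying it explicitly would close both gaps at once; the paper's own proof leaves this implicit as well.
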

\begin{proof}
Using Lemma \ref{coresize}, we see that $\mu$ is a $m$-restricted partition with $m$-core $\mu_m$. From Lemma \ref{addremovedif}, $\mathcal{R}_k(\mu)-\mathcal{A}_k(\mu)=\mathcal{A}_k(\lambda)-\mathcal{R}_k(\lambda)$. Thus we can define an inverse map by mapping $\mu$ to $\lambda$ by removing the $\mathcal{R}_k(\mu)-\mathcal{A}_k(\mu)$ highest conormal $k$-cells from $\mu$.
\end{proof}
\begin{remark}
The above bijection preserves $h_{m,0}$, as it maps $m$-restricted partitions to $m$-restricted partitions.
\end{remark}

\begin{theorem}
Let $\mathcal{P}_{\lambda_m}$ be the set of partitions with $m$-core $\lambda_m$. We have the following generating functions for $m$-restricted partitions and $h_{m,0}$:
\begin{align}
\label{prestrict}\sum_{\mu\in \mathcal{P}_{\lambda_m}\cap \mathcal{R}_p}q^{|\mu|}&=\frac{q^{|\lambda_m|}}{\prod_{i\geq 1}(1-q^{mi})^{m-1}},\\
\label{hookp0}\sum_{\mu\in\mathcal{P}_{\lambda_m}}t^{h_{m,0}(\mu)}q^{|\mu|}&=\frac{q^{|\lambda_m|}}{\prod_{i\geq 1}(1-q^{mi})^{m-1}(1-tq^{mi})},
\end{align}
where $\mathcal{R}_m$ is the set of $m$-restricted partitions.
\end{theorem}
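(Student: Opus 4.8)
The plan is to prove both generating function identities \eqref{prestrict} and \eqref{hookp0} together, using the machinery developed earlier in the section. The key observation is that \eqref{hookp0} follows from \eqref{prestrict} by the same ``stripping off repeated columns'' argument that proved Theorem \ref{genhook20} in the $m=2$ case: given $\mu \in \mathcal{P}_{\lambda_m}$, one removes all maximal groups of $m$ equal columns (the number of such groups being exactly $h_{m,0}(\mu)$, since each contributes one cell with leg $0$ on an $m$-divisible hook), leaving an $m$-restricted partition $\tilde\mu$. Removing $m$ equal columns amounts to deleting $m$-hooks, so $\tilde\mu \in \mathcal{P}_{\lambda_m}$, and one recovers $\mu$ from the pair $(\tilde\mu, \nu)$ where $\nu$ records the column-multiplicities-of-$m$ data, a partition all of whose parts are divisible by $m$. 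This bijection gives
\begin{align*}
\sum_{\mu\in\mathcal{P}_{\lambda_m}} t^{h_{m,0}(\mu)} q^{|\mu|}
= \Big(\sum_{\tilde\mu \in \mathcal{P}_{\lambda_m}\cap\mathcal{R}_m} q^{|\tilde\mu|}\Big)\cdot \prod_{i\geq 1}\frac{1}{1-tq^{mi}},
\end{align*}
so \eqref{hookp0} is immediate once \eqref{prestrict} is established.

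For \eqref{prestrict}, the idea is to use the poset structure on $m$-cores together with the bijection of Lemma \ref{increasecore}. That bijection, applied repeatedly along a saturated chain in the poset of $m$-cores from the empty partition up to $\lambda_m$, sets up a $|\lambda|$-preserving-up-to-shift correspondence between $m$-restricted partitions with $m$-core $\lambda_m$ and $m$-restricted partitions with empty $m$-core, and it preserves $m$-quotient size. More precisely, each application of Lemma \ref{increasecore} along an edge $\lambda_m' \lessdot \mu_m'$ changes the size by $|\mu_m'| - |\lambda_m'|$ (a telescoping contribution summing to $|\lambda_m|$) while preserving $m$-quotient size, so it suffices to prove \eqref{prestrict} for $\lambda_m = \emptyset$, i.e.
\begin{align*}
\sum_{\mu\in \mathcal{P}_\emptyset \cap \mathcal{R}_m} q^{|\mu|} = \prod_{i\geq 1}\frac{1}{(1-q^{mi})^{m-1}}.
\end{align*}
For this base case I would pass to the $m$-quotient: a partition with empty $m$-core is identified with its $m$-quotient $(\lambda_0, \dots, \lambda_{m-1})$ (with $m$-shift all zero), its size is $m(|\lambda_0| + \dots + |\lambda_{m-1}|)$, and the condition that $\mu$ be $m$-restricted should translate into a restriction on the quotient. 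The cleanest route is to recall that the $m$-quotient of an $m$-restricted partition with empty $m$-core ranges over tuples with $\lambda_{m-1} = \emptyset$ (or some fixed index empty), leaving $m-1$ free partitions; summing $q^{m|\lambda_i|}$ over each of the $m-1$ free components gives $\prod_{i\geq 1}(1-q^{mi})^{-(m-1)}$.

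The main obstacle I expect is pinning down precisely how the $m$-restricted condition on $\mu$ corresponds to a condition on its $m$-quotient and $m$-shift — the clean statement ``$\mu$ is $m$-restricted with $m$-core $\lambda_m$ iff its $m$-quotient lies in some explicit set parametrized by $m-1$ free partitions'' requires a careful bead/abacus argument on the edge sequence, working out which shifts $k_i$ are forced and showing there are no $m$ consecutive equal columns iff the quotient components interleave correctly. Once this combinatorial dictionary is in hand, everything else is bookkeeping: the telescoping of core sizes along the poset chain, the $m$-quotient-size preservation (already recorded in Lemma \ref{increasecore}), and the geometric-series evaluation. An alternative that sidesteps the abacus bookkeeping entirely is to iterate Lemma \ref{increasecore} to reduce to $\lambda_m = \emptyset$ and then invoke the transpose of the $m$-analogue of Proposition \ref{2coredist} restricted to the empty core; if such a statement is not available in the needed generality, the direct $m$-quotient argument above is the fallback.
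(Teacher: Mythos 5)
Your derivation of \eqref{hookp0} from \eqref{prestrict} by stripping maximal groups of $m$ equal columns is exactly the paper's argument (it says ``similar to the proof of Theorem \ref{genhook20}''), and you correctly identify Lemma \ref{increasecore} as the engine for \eqref{prestrict}. The gap is in the direction of the reduction: you propose to iterate Lemma \ref{increasecore} \emph{down} to the empty $m$-core and then claim that an $m$-restricted partition with empty $m$-core is characterized by having some fixed component of its $m$-quotient empty. That characterization is false. Take $m=2$ and quotient size $2$: all five partitions of $4$ have empty $2$-core, and the two $2$-restricted ones are $(2,1,1)$ and $(1,1,1,1)$, whose $2$-quotients are $(\emptyset,(1,1))$ and $((1,1),\emptyset)$ — the empty component sits in \emph{different} positions — while $(3,1)$ and $(4)$, which are not $2$-restricted, have $2$-quotients $((2),\emptyset)$ and $(\emptyset,(2))$. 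So ``a fixed quotient component is empty'' is neither necessary nor sufficient for $m$-restrictedness over the empty core, and your base case cannot be established as stated. The count happens to come out right numerically ($2 = p(2)$), but no bijection of the proposed form exists.

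The paper's proof goes the opposite way: it uses Lemma \ref{increasecore} to transfer ($m$-quotient-size-preservingly) to a \emph{large} core $\mu_m$, chosen so that the entries $s_j = mk_j + j$ of its $m$-shift are pairwise at least $mn$ apart. For such a core, Theorem \ref{bigcore} with $l=m$ gives $h_{m,0}(\lambda) = a(\lambda_{i_m})$, so $\lambda$ is $m$-restricted ($h_{m,0}(\lambda)=0$) if and only if the distinguished quotient component $\lambda_{i_m}$ is empty; the remaining $m-1$ components are free, which yields $\prod_{i\geq 1}(1-q^{mi})^{-(m-1)}$, and transferring back accounts for the factor $q^{|\lambda_m|}$. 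Note that the hypothesis of Theorem \ref{bigcore} fails maximally for the empty core (all shifts coincide), which is precisely why your base case breaks. If you replace ``reduce to $\lambda_m=\emptyset$'' by ``reduce to a core satisfying the hypotheses of Theorem \ref{bigcore}'' and invoke that theorem in place of your conjectured quotient characterization, the rest of your argument (the telescoping of core sizes along the poset chain and the column-stripping step) goes through.
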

\begin{proof}
For (\ref{prestrict}), we see that there is a $m$-quotient size preserving bijection between $\mathcal{P}_{\lambda_m}\cap \mathcal{R}_m$ and $\mathcal{P}_{\mu_m}\cap \mathcal{R}_m$ for any $m$-core $\mu_m$ which may be obtained by repeated use of Lemma \ref{increasecore}. Since we may choose the $m$-core $\mu_m$ to satisfy the hypotheses of Theorem \ref{bigcore}, we get (\ref{prestrict}). The proof of (\ref{hookp0}) is similar to the proof of Theorem \ref{genhook20}.
\end{proof}

\begin{remark}
We know from Section \ref{multisum_section} that each $m$-diagonal equivalence class is uniquely determined by an $m$-restricted partition. Therefore the bijection in Lemma \ref{increasecore} induces a bijection which preserves $m$-quotient size between the $m$-diagonal equivalence classes associated with different $m$-cores.
\begin{example}
Consider the bijection from $2$-diagonal equivalence classes with $2$-core $\{1\}$ and $2$-quotient size $4$ to $2$-diagonal equivalence classes with $2$-core $\{2,1\}$ and $2$-quotient size $4$. Then the bijection of Theorem \ref{increasecore} is
\begin{align*}
[1,1,1,1,1,1,1,1,1]\Rightarrow&[2,1,1,1,1,1,1,1,1,1]\\
[2,2,1,1,1,1,1]    \Rightarrow& [2,2,2,1,1,1,1,1] \\
[3,2,1,1,1,1]       \Rightarrow&[3,3,2,1,1,1]\\
[2,2,2,2,1]        \Rightarrow& [2,2,2,2,2,1] \\
[3,2,2,1,1]        \Rightarrow& [4,3,2,1,1] \\
\end{align*}
where we use the unique $2$-restricted partition to represent each $2$-diagonal equivalence class.
\end{example}

We also know the generating function of $h_{1,1}$ in any given $2$-diagonal equivalence class from Corollary \ref{generatingfunction}. One might hope that this bijection between $2$-diagonal equivalence classes also preserves the generating function of $h_{1,1}$. However, this is not the case. In fact we obtain some non-trivial $q$-binomial identities, which we have no chance of solving in the general case, by equating the generating functions corresponding to different $2$-cores with given $2$-quotient size. From the example above, we obtain
\begin{align*}
&\left[2\atop 1\right]_q+\left[2\atop 1\right]_q
\left[2\atop 1\right]_q+\left[2\atop 1\right]_q+\left[3\atop 1\right]_q+\left[3\atop 1\right]_q\left[3\atop 1\right]_q\\
=&\left[2\atop 1\right]_q+\left[2\atop 1\right]_q\left[2\atop 1\right]_q+
\left[3\atop 2\right]_q\left[2\atop 1\right]_q+\left[3\atop 1\right]_q+\left[5\atop 1\right]_q.
\end{align*}

\end{remark}

\end{document}